\numberwithin{equation}{section}
\newcommand{\be}{\begin{equation}}
	\newcommand{\ee}{\end{equation}} 
\newcommand{\benn}{\begin{equation*}}
	\newcommand{\eenn}{\end{equation*}}
\newcommand{\bea}{\begin{eqnarray}}
	\newcommand{\eea}{\end{eqnarray}}
\newcommand{\beann}{\begin{eqnarray*}}
	\newcommand{\eeann}{\end{eqnarray*}}
\newtheorem{theorem}{Theorem}[section]
\newtheorem{corollary}[theorem]{Corollary}
\newtheorem{lemma}[theorem]{Lemma}
\newcommand{\xx}{\mathbb{X}}
\newtheorem{definition}[theorem]{Definition}
\newtheorem{remark}[theorem]{Remark}
\newtheorem{example}[theorem]{Example}
\newtheorem{assumptions}[theorem]{Assumptions}
\newcommand{\qed}{\hfill $\Box$\smallskip}
\newcommand{\E}{\noindent{$\mathbb{E}$ \ }}
\newcommand{\X}{\mathbf{X}}
\newcommand{\XX}{\mathbb{X}}
\def\R{\mathbb{R}}
\def\N{\mathbb{N}}
\def\P{\mathbb{P}}
\def\E{\mathbb{E}}
\def\Q{\mathbb{Q}}
\def\P{\mathbb{P}}
\def\cA{\mathcal{A}}
\def\cB{\mathcal{B}}
\def\cC{\mathcal{C}}
\def\cD{\mathcal{D}}
\def\cL{\mathcal{L}}
\def\cO{\mathcal{O}}
\def\cP{\mathcal{P}}
\def\txtd{{\textnormal{d}}}
\def\Id{{\textnormal{Id}}}
\newcommand{\norm}[1]{\left\lVert #1 \right\rVert}
\newcommand{\abs}[1]{\left| #1 \right|}
\newcommand*\samethanks[1][\value{footnote}]{\footnotemark[#1]}
\title{Existence and regularity of random attractors for stochastic evolution equations driven by rough noise}
\author{Alexandra Blessing (Neam\c tu)\thanks{University of Konstanz, Department of Mathematics and Statistics,  Universit\"atsstra\ss{}e~10 78464 Konstanz, Germany. E-Mail: alexandra.neamtu@uni-konstanz.de, tim.seitz@uni-konstanz.de}
	~~and~~Tim Seitz\samethanks
}
\begin{document}
	\maketitle
	\vspace{-1cm}
	\begin{center}
		\textit{Dedicated to the memory of Björn Schmalfuss}
	\end{center}
	\begin{abstract}
		This work establishes the existence and regularity of random pullback attractors for parabolic  partial differential equations with rough nonlinear multiplicative noise under natural assumptions on the coefficients. To this aim, we combine tools from rough path theory and random dynamical systems.~An application is given by partial differential equations with rough boundary noise, for which flow transformations are not available. 
	\end{abstract}
	{\bf Keywords}: rough partial differential equations, singular Gronwall inequality, random pullback attractors, rough boundary noise \\
	{\bf Mathematics Subject Classification (2020)}: 60G22, 60L20, 
	60L50, 
	37H05, 37L55. 
	
	\section{Introduction}
	We investigate the asymptotic behavior of the semilinear parabolic evolution rough partial differential equation (RPDE) given by  \begin{align}\label{eq:MainEq}
		\begin{cases}
			\txtd y_t = (Ay_t+F(y_t))~\txtd t + G(y_t)~\txtd \X_t,& \\
			y(0)=y_0\in E, &
		\end{cases}
	\end{align}
	where $E$ is a separable Banach space, $A:D(A)\subset E\to E$ is the generator of an exponentially stable analytic semigroup, $F,G$ are nonlinear terms and $\X$ is a $\gamma$-H\"older Gaussian rough path for $\gamma\in (\frac{1}{3},\frac{1}{2}]$. Under suitable assumptions on the coefficients, we prove the existence and regularity of a pullback attractor, which is a compact random set of the phase space describing the long-time behavior of~\eqref{eq:MainEq}.
	There are several major technical difficulties one encounters, when trying to investigate the existence of invariant sets for stochastic partial differential equations (SPDEs). A first conceptual difficulty is to employ the concept of random dynamical systems (RDS)~\cite{Arnold} for SPDEs. It is well-known that an It\^{o}-type SDE generates an RDS under natural assumptions on the coefficients~\cite{Arnold,Kunita90}. However, the generation of an RDS from an It\^{o}-type SPDE has been a long-standing open problem, mostly since Kolmogorov's theorem breaks down for random fields parametrized by infinite-dimensional Hilbert spaces. As a consequence, it is not trivial, to obtain a RDS from a general SPDE. This problem was fully solved only under very restrictive assumptions on the structure of the noise driving the SPDE.~For instance, if one deals with purely additive noise or linear multiplicative Stratonovich noise, there are standard flow transformations which reduce the SPDE to a PDE with random non-autonomous coefficients.~Since this random PDE can be solved pathwise, it is straightforward to obtain an RDS. However, for nonlinear multiplicative noise, this technique is no longer applicable.~In the rough path setting, these issues do not occur, due to the fact that the solution exists pathwise, without transforming the SPDE into a random PDE. Provided that the noise forms a rough path cocycle, the generation of a random dynamical system from rough differential equations was established in~\cite{BRiedelScheutzow}. For RPDEs this aspect was considered in \cite{Hofmanova2,HN20,HN22, KN23}, where a major difficulty is the global existence of the solution.\\

	There are numerous results on the existence of attractors for stochastic partial differential equations, starting with the famous works by Crauel and Flandoli~\cite{CrauelFlandoli} and Schmalfuss~\cite{FlandoliSchmalfuss96,Schmalfuss}. These are applicable to equations driven by additive~\cite{Gess} or linear multiplicative noise~\cite{Gess:m} and use  flow transformations.~However, there are very few works~\cite{CaoGao23,GaoMS2014,LinYangZeng23} which use a pathwise solution theory to study attractors for SPDEs. Moreover, all these techniques lead to certain restrictions on the diffusion coefficient $G$~\cite{LinYangZeng23} or on the noise~\cite{GaoMS2014}. In~\cite{LinYangZeng23} the growth bound of the Lipschitz continuous nonlinear term $G$ is assumed to be smaller than one, whereas in~\cite{GaoMS2014} the trace of the infinite-dimensional fractional Brownian motion is assumed to be small. The techniques in~\cite{CaoGao23,GaoMS2014, LinYangZeng23} rely on stopping times which directly control the size of the noise on a certain time interval and are required to be on average smaller than one, leading to the restrictions mentioned above.\\
	
	Our approach is based on the integrable solution bounds for rough partial differential equations recently obtained in~\cite{RiedelVarzaneh23}. This technique uses Greedy times~\cite{Cass,FrizRiedel} instead of stopping times, which are defined in terms of a control, see Definition~\ref{def:greedyTimes}. Their number $N_{s,t}(\omega)$ on a time interval $[s,t]$ has much better integrability properties than the $p$-variation (or H\"older) norm of the given rough path. Modifying the sewing lemma in order to incorporate the control instead of the H\"older norm of the solution,~\cite{RiedelVarzaneh23} derived an integrable bound on the solution of~\eqref{eq:MainEq} on a time interval $[s,t]$ in terms of $N_{s,t}(\omega)$ and a polynomial expression containing the H\"older norms of the driving rough path. 
	Such bounds already turned out to be useful for the stability and for the existence of invariant manifolds for rough differential equations~\cite{RV23} improving the assumptions on the coefficients made in~\cite{KN21}. In our case, this method improves the restriction on the smallness of the growth bound of the coefficient $G$ required for the existence of the attractor in~\cite{LinYangZeng23}.~Another major advantage of this method is that it directly allows us to investigate the regularity of the random attractor. This is natural for parabolic SPDEs perturbed by additive or linear multiplicative noise. For instance, in~\cite[Section 3.1]{D97} a stochastic reaction-diffusion equation with Dirichlet boundary conditions on a domain $\cO$, driven by finite-dimensional additive noise was considered on the phase space $L^2(\cO)$. It was shown that its random attractor is contained in a ball of $H^1_0(\cO)$ with random radius. To the best of our knowledge, there are no results on the regularity of attractors for rough PDEs as~\eqref{eq:MainEq} considered here. All the available results use flow transformations~\cite{D97,Bao} and the references specified therein. We also believe that these tools can be further used to obtain a bound on the box dimension of the attractor or to prove the existence of exponential attractors~\cite{KuhnNeamtuSonner21} which automatically entails a bound on their box dimension. 
	We leave these aspects for a future work. \\
	
	This manuscript is structured as follows. Section~\ref{sec:p} collects basic results from rough path theory and random dynamical systems. Section~\ref{sec:main} contains the main results (Lemma~\ref{lem:AbsorbingSet}, Theorem~\ref{thm:attractor}, Corollary~\ref{cor:reg})  on the existence and regularity of a random pullback attractor for~\eqref{eq:MainEq}. Analogously to~\cite{KN23,LinYangZeng23,KN21,RV23} we discretize~\eqref{eq:MainEq} and use the integrable bounds derived in~\cite{RiedelVarzaneh23} to estimate the controlled rough path norm of the solution on a  time interval of length one.
	Thereafter, we establish the existence of a random absorbing set for~\eqref{eq:MainEq} using a version of the singular Henry-Gronwall lemma~\cite{Henry81}. This is necessary due to the fact that the nonlinear term $F$ is allowed to lose spatial regularity and leads to an a-priori estimate which contains the derivative of the Mittag-Leffler function. This is a generalization of the exponential function and therefore has a similar asymptotic behavior~\cite{Henry81,SellYou}. This fact is crucial for our aims, since we are interested in the long-time behavior of~\eqref{eq:MainEq}. The arguments in the computation of the absorbing set based on the singular Gronwall lemma and rough path estimates are new 
	and of independent interest. 
	
	The existence of an absorbing set together with a compactness argument for parabolic PDEs yield the existence of an attractor for~\eqref{eq:MainEq}.
	In Section~\ref{sec:app} we provide examples for the coefficients of~\eqref{eq:MainEq} which satisfy the conditions imposed for the absorbing set in Section~\ref{sec:main}.~A main application which fits into our framework is given by parabolic PDEs with rough boundary noise. The well-posedness and generation of a random dynamical system in this case was investigated  in~\cite{NS23}. In this work we go a step further establish the existence of random pullback attractors. To our best knowledge, this is the first result in this direction for this type of equations.~Even if the noise is additive or linear multiplicative, since it is acting on the boundary, it is not possible to transform such equations into PDEs with random coefficients. 
	
	\section*{Ackwnowledgements} We are grateful to Mazyar Ghani Varzaneh for very helpful discussions regarding~\cite{RiedelVarzaneh23}. 
	
	\section{Preliminaries}\label{sec:p}
	In this section, we collect some basic results on rough paths \cite{FrizHairer,FrizVictoir10} and rough convolutions for semilinear parabolic problems~\cite{GHN2021} as well as concepts from the theory of random dynamical systems. Furthermore, we provide the assumptions on the coefficients of~\eqref{eq:MainEq} and the construction of a sequence of greedy time steps required in order to establish integrable bounds for the rough integral \cite{RiedelVarzaneh23}. 
	\subsection{Rough path theory}
	We denote by $C^\alpha([0,T];E)$ the $\alpha$-H\"older continuous paths with values in some Banach space $E$, and write $\left[\cdot\right]_{\alpha,E}$ for the H\"older seminorm as well as $\norm{\cdot}_{\infty,E}$ for the supremums norm. In the case of $E:=\R$, we write $\left[\cdot\right]_{\alpha}$ or $\left[\cdot\right]_{\alpha,[0,T]}$ to emphasize the time dependence. When the time interval is clear from the context, we  use the abbreviation $C^{\gamma}(E)$ to point out the interplay between space and time regularity. Furthermore, let be $\Delta_J:=\{ (s,t)\in J \times J : s\leq t \}$ for a compact interval $J\subset \R$. Then, the noise is a $d$-dimensional $\gamma$-H\"older rough path $\textbf{X}:=(X,\mathbb{X})$, for $\gamma\in(1/3,1/2]$ with $X_0=0$. Here we assume without loss of generality that $d=1$, since the generalization to $d>1$ can be made componentwise.
	More precisely, we have
	\begin{align*}
		X\in C^{\gamma}([0,T];\mathbb{R}) ~~\mbox{ and } ~~ \mathbb{X}\in C^{2\gamma}(\Delta_{[0,T]};\mathbb{R}\otimes\mathbb{R})
	\end{align*}
	and the connection between $X$ and $\mathbb{X}$ is given by Chen's relation
	\begin{align}\label{chen}
		\mathbb{X}_{s,t}- \mathbb{X}_{s,u}-\mathbb{X}_{u,t}=X_{s,u}\otimes X_{u,t},
	\end{align}
	for $s\leq u\leq t$, where we write $X_{s,u}:=X_u-X_s$ for any path. Let us further introduce an appropriate metric on the space of rough paths.
	\begin{definition}
		Let $\mathbf{X}=(X,\mathbb{X})$ and $\mathbf{\widetilde{X}}=(\widetilde{X},\widetilde{\mathbb{X}})$ be two $\gamma$-H\"older rough paths and $T>0$. We introduce the $\gamma$-H\"older rough path (inhomogeneous) metric
		\begin{align}\label{rp:metric}
			d_{\gamma,J}(\mathbf{X},\mathbf{\widetilde{X}})
			:= \sup\limits_{(s,t)\in \Delta_{J}} \frac{|X_{s,t}-\widetilde{X}_{s,t}|}{|t-s|^{\gamma}}
			+ \sup\limits_{(s,t) \in \Delta_{J}}
			\frac{|\mathbb{X}_{s,t}-\widetilde{\mathbb{X}}_{s,t}|} {|t-s|^{2\gamma}},
		\end{align}
		and set $\rho_{\gamma,J}(\mathbf{X}):=d_{\gamma,J}(\mathbf{X},0)$.
	\end{definition}
	A subclass of H\"older rough paths, are those resulting from the closure of smooth functions with respect to the introduced rough path metric.
	\begin{definition}\label{def:geomRP}
		We call $\X=(X,\XX)$ a geometric $\gamma$-H\"older rough path if there exists a sequence $(X^{n})_{n\in \N}\subset C^\infty ([0,T];\R)$ such that $\X^{n}:=(X^{n},\XX^{n})$, where $\XX^{n}$ is the canonical lift of $X^{n}$
		\begin{align}\label{eq:LevyArea}
			\XX^n_{s,t}:=\int_{s}^t X^n_{s,r}\otimes\txtd X^n_r,
		\end{align}
		converges in the rough path topology to $\X$, i.e. $d_{\gamma,[0,T]}(\X^n,\X)\to 0$ for $n\to \infty$. 
	\end{definition}
	\begin{remark}\label{rem:RPContHolderNorm}
		The convergence $d_{\gamma,[0,T]}(\X_n,\X)\to 0$ for $n\to \infty$ in the definition above is equivalent to
		\begin{align*}
			\left[X_n-X\right]_{\gamma,[0,T]}\to 0\qquad \text{and}\qquad \left[\XX_n-\XX\right]_{2\gamma,\Delta_{[0,T]}}\to 0,
		\end{align*}
		for $n\to \infty$. But this means that $X\in C^{0,\gamma}([0,T];\R)$ and $\XX\in C^{0,2\gamma}(\Delta_{[0,T]};\R\otimes\R)$, where $C^{0,\gamma}$ is the closure of smooth paths with respect to the H\"older norm. Therefore, we have the continuity of $\Delta_{[0,T]}\to \R,(s,t)\mapsto \left[X\right]_{\gamma,[s,t]}$ as well as for the H\"older norm of $\XX$, 
		see for example \cite[Theorem 5.33]{FrizVictoir10}.
	\end{remark}
	Let us now specify the necessary assumptions on the linear part of~\eqref{eq:MainEq}.
	\begin{assumptions}\label{ass}
		\begin{itemize}
			\item[1)] The operator $A:D(A)\subset E \to E$ is densely defined, generates a compact analytic semigroup $(S_t)_{t\in [0,\infty)}$ and has bounded imaginary powers, see Section~\ref{sec:app} for concrete examples. 
			\item[2)] The semigroup is exponentially stable, which means that there exist constants $\tilde{\lambda}_A,C_A>0$ such that $\norm{S_t}_{\mathcal{L}(E)}\leq C_A e^{-\tilde{\lambda}_A t}$ for every $t\geq 0$.
		\end{itemize}
	\end{assumptions}
	These conditions ensure that the fractional powers of $A$ exist. So set $E_\alpha:=D(A^\alpha)$ endowed with the norm $\norm{\cdot}_\alpha:=\norm{A^\alpha \cdot}_E$ for $\alpha>0$ and also $E_{-\alpha}$ as the closure of $E$ with respect to the norm $\norm{\cdot}_{-\alpha}=\norm{A^{-\alpha}\cdot}_{E}$. We obtain a family of separable Banach spaces $(E_\alpha,\norm{\cdot}_\alpha)_{\alpha\in\mathbb{R}}$ which are continuously embedded, i.e.~$E_{\alpha}\hookrightarrow E_\beta$ for $\alpha\leq \beta$ and satisfy  
	\begin{align}\label{eq:IPolSpaces}
		E_{(\beta-\alpha)\theta+\alpha}=[E_\alpha,E_\beta]_\theta
	\end{align}
	for $\alpha\leq \beta$ and $\theta\in (0,1)$, where $[\cdot,\cdot]_\theta$ denotes the complex interpolation, see for example \cite[Theorem V.1.5.4]{Amann95}. In particular, this implies the interpolation inequality
	\begin{align}\label{interpolation:ineq}
		\norm{x}^{\alpha_3-\alpha_1}_{\alpha_2} \lesssim \norm{x}^{\alpha_3-\alpha_2}_{\alpha_1} \norm{x}^{\alpha_2-\alpha_1}_{\alpha_3},
	\end{align}
	for $\alpha_1\leq \alpha_2\leq \alpha_3$ and $x\in  E_{\alpha_3}$. Therefore $(E_\alpha,\norm{\cdot}_\alpha)_{\alpha\in\mathbb{R}}$ is an example of a monotone family of interpolation spaces as introduced in \cite[Definition 2.1]{GHN2021}. Further, the compactness of the semigroup ensures that the embeddings $E_\beta\hookrightarrow E_\alpha $ for $\beta>\alpha$ are compact.
	
	The main advantage of this approach is that we can view the semigroup $(S_t)_{t\in [0,\infty)}$ generated by $A$ as a linear mapping on these interpolation spaces and obtain a trade-off between spatial and time regularity. In particular, regarding the assumed exponential stability, we have that $S_t\in\cL(E_{\alpha})$ and for every $\lambda_A\in (0,\tilde{\lambda}_A),\alpha\in \R$, $\sigma\in [0,1]$ there exists a constant $C_{-\sigma}(\lambda_A)=C_{-\sigma}>0$ such that
	\begin{align}
		\norm{S_tx}_{\alpha+\sigma}\leq C_{-\sigma} e^{-\lambda_A t}t^{-\sigma}\norm{x}_{\alpha}\label{hg:1},
	\end{align}
	for $t\geq 0$, see~\cite[Theorem 7.7.2 iii)]{Vrabie03}. 
	Throughout the manuscript we fix $\lambda_A<\tilde{\lambda}_A$. \\
	
	Keeping this in mind, we now introduce the definition of a controlled rough path tailored to the parabolic structure of the PDE we consider, in the spirit of~\cite{GHN2021}. This is convenient for our aims, since the semigroup will not be incorporated in the definition of the controlled rough path as in~\cite{GHairer} or alternative approaches~\cite{GubinelliTindel, HN19} which iterate the stochastic convolution into itself. 
	
	\begin{definition}\label{def:crp}
		We call a pair $(y,y')$ a controlled rough path for some fixed $\alpha\in \R$ if $(y,y')\in C(E_\alpha) \times (C(E_{\alpha-\gamma} ) \cap C^{\gamma}(E_{\alpha-2\gamma}))$ and the remainder  
		\begin{align}\label{remainder}
			(s,t)\in \Delta_{[0,T]}\mapsto R^y_{s,t}:= y_{s,t} -y'_s X_{s,t}    
		\end{align}
		belongs to $ C^{\gamma}(E_{\alpha-\gamma})\cap C^{2\gamma}(E_{\alpha-2\gamma})$. The component $y'$ is often referred to as Gubinelli derivative of $y$.
		The space of controlled rough paths is denoted by $\cD^{2\gamma}_{X,\alpha}$ and endowed with the norm $\|\cdot\|_{\cD^{2\gamma}_{X,\alpha}}$ given by
		\begin{align}\label{g:norm}
			\norm{y,y'}_{\cD^{2\gamma}_{X,\alpha}}:= \left\|y \right\|_{\infty,E_\alpha} 
			+ \|y' \|_{\infty,E_{\alpha-\gamma}}
			+ \left[y'\right]_{\gamma,E_{\alpha-2\gamma}}
			+\left[R^y \right]_{\gamma,E_{\alpha-\gamma}}    + \left[R^y \right]_{2\gamma,E_{\alpha-2\gamma}}.
		\end{align}
	\end{definition}
	The first index in the notation above always indicates the time regularity, and the second one stands for the space regularity. For simplicity, we often write $\|y\|_{\infty,\alpha}:=\|y\|_{\infty,E_\alpha}$ and $[y']_{\gamma,\alpha-2\gamma}:=[y']_{\gamma,E_{\alpha-2\gamma}}$ and analogously for the remainder.
	In order to emphasize the time horizon, we write $\cD^{2\gamma}_{X,\alpha}([0,T])$ instead of $\cD^{2\gamma}_{X,\alpha}$.   
	Given this setting of controlled rough paths, one can introduce the rough convolution as follows.
	\begin{theorem}{\em (\cite[Theorem 4.5]{GHN2021}).}
		\label{integral} Let $(y,y')\in \cD^{2\gamma}_{X,\alpha}([s,t])$, then the limit
		\begin{align}\label{Gintegral}
			\int\limits_{s}^{t} S_{t-r}y_{r}~\txtd \textbf{X}_{r} :=\lim\limits_{|\mathcal{P}|\to 0} \sum\limits_{[u,v]\in\mathcal{P}} S_{t-u}y_{u}X_{v,u} + S_{t-u}y'_{u}\mathbb{X}_{v,u},
		\end{align}
		exists as an element in $E_{\alpha-2\gamma}$, where $\mathcal{P}$ denotes a partition of $[s,t]$. For $0\leq \beta< 3\gamma$ the following estimate
		\begin{align}\label{estimate:integral}
			\norm{ \int\limits_{s}^{t} S_{t-r} y_{r}~\txtd\textbf{X}_{r}}_{\alpha-2\gamma+\beta} \leq C_I \rho_{\gamma,[s,t]}(\X) \norm{y,y'}_{\cD^{2\gamma}_{X,\alpha}([s,t])} (t-s)^{3\gamma-\beta}
		\end{align}
		holds true, where $C_I:=C_I(\alpha,\gamma,\beta)>0$. 
	\end{theorem}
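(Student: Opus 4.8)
The plan is to realise the right-hand side of \eqref{Gintegral} through a sewing construction adapted to the analytic semigroup, in the spirit of \cite{GubinelliTindel,GHN2021}. We freeze the terminal time $t$ and introduce, for $(u,v)\in\Delta_{[s,t]}$, the two-parameter germ
\begin{align*}
	\Xi_{u,v}:=S_{t-u}y_u X_{u,v}+S_{t-u}y'_u\mathbb{X}_{u,v},
\end{align*}
which belongs to $E_{\alpha-2\gamma}$ by the controlled rough path structure, and we observe that the Riemann sums in \eqref{Gintegral} are exactly $\sum_{[u,v]\in\cP}\Xi_{u,v}$. The first step is to exhibit $\Xi$ as almost additive: setting $\delta\Xi_{u,m,v}:=\Xi_{u,v}-\Xi_{u,m}-\Xi_{m,v}$ for $u\le m\le v$ and substituting Chen's relation \eqref{chen}, the expansion $y_{u,m}=y'_u X_{u,m}+R^y_{u,m}$ from \eqref{remainder}, the identity $y'_m=y'_u+y'_{u,m}$ and the semigroup law $S_{t-u}=S_{t-m}S_{m-u}$, a short rearrangement yields
\begin{align*}
	&\delta\Xi_{u,m,v}= S_{t-m}(S_{m-u}-\Id)y_u\,X_{m,v}+(S_{t-u}-S_{t-m})y'_u\,X_{u,m}X_{m,v}-S_{t-m}R^y_{u,m}\,X_{m,v}\\
	&\qquad\qquad +(S_{t-u}-S_{t-m})y'_u\,\mathbb{X}_{m,v}-S_{t-m}y'_{u,m}\,\mathbb{X}_{m,v}.
\end{align*}
In every term a regularising difference $S_{m-u}-\Id$ (possibly already carrying an extra $S_{t-m}$) acts on a quantity that is bounded in a fixed space, while in the third and fifth term the factors $R^y_{u,m}$ and $y'_{u,m}$ are themselves small in $m-u$; this is where the parabolic smoothing enters.

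The second step estimates the five terms in $E_{\alpha-2\gamma+\beta}$. The only tools are: the smoothing and stability bounds that follow from \eqref{hg:1}, namely $\norm{S_\tau x}_{\mu+\sigma}\lesssim \tau^{-\sigma}\norm{x}_\mu$ and $\norm{(S_\tau-\Id)x}_{\mu}\lesssim \tau^{\nu}\norm{x}_{\mu+\nu}$ for $\sigma,\nu\in[0,1]$, the exponential factors being bounded by $1$; the rough path bounds $\abs{X_{u,v}}\le \rho_{\gamma,[s,t]}(\X)(v-u)^{\gamma}$ and $\abs{\mathbb{X}_{u,v}}\le \rho_{\gamma,[s,t]}(\X)(v-u)^{2\gamma}$; the interpolation inequality \eqref{interpolation:ineq}, applied to $R^y$ via $R^y\in C^{\gamma}(E_{\alpha-\gamma})\cap C^{2\gamma}(E_{\alpha-2\gamma})$; and the fact that $\norm{y}_{\infty,E_\alpha}$, $\norm{y'}_{\infty,E_{\alpha-\gamma}}$, $[y']_{\gamma,E_{\alpha-2\gamma}}$ and $[R^y]_{2\gamma,E_{\alpha-2\gamma}}$ are dominated by $\norm{y,y'}_{\cD^{2\gamma}_{X,\alpha}([s,t])}$. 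In each term one has to decide how to split the $\beta$ units of spatial regularity that must be produced between the difference $S_{m-u}-\Id$ and the leftover semigroup $S_{t-m}$; any admissible split gives a bound of the form
\begin{align*}
	\norm{\delta\Xi_{u,m,v}}_{\alpha-2\gamma+\beta}\lesssim \rho_{\gamma,[s,t]}(\X)\,\norm{y,y'}_{\cD^{2\gamma}_{X,\alpha}([s,t])}\,(t-m)^{-\kappa}(v-u)^{\eta},
\end{align*}
and one checks that the exponents can be chosen with $\eta>1$ and $0\le\kappa<1$ at once, which turns out to be possible exactly when $\beta<3\gamma$.

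The third step applies the sewing lemma in its version admitting a time-singular weight of the above type, as in \cite{GubinelliTindel} and in the argument behind \cite[Theorem~4.5]{GHN2021}. Since $\delta\Xi$ is super-additive in that sense, the sums $\sum_{[u,v]\in\cP}\Xi_{u,v}$ form a convergent net in $E_{\alpha-2\gamma+\beta}$ as $\abs{\cP}\to 0$, with a limit independent of the chosen partitions by a standard refinement comparison; this limit is $\int_s^t S_{t-r}y_r\,\txtd\X_r$, which in particular also lies in $E_{\alpha-2\gamma}$. The sewing estimate bounds $\int_s^t S_{t-r}y_r\,\txtd\X_r-\Xi_{s,t}$, and it remains to estimate $\Xi_{s,t}=S_{t-s}y_s X_{s,t}+S_{t-s}y'_s\mathbb{X}_{s,t}$ directly: using $S_{t-s}$ to generate the regularity missing to land in $E_{\alpha-2\gamma+\beta}$, at the price of a negative power of $t-s$ that is compensated by the $(t-s)^{\gamma}$ and $(t-s)^{2\gamma}$ carried by $X_{s,t}$ and $\mathbb{X}_{s,t}$, produces the factor $(t-s)^{3\gamma-\beta}$. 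Collecting the two contributions gives \eqref{estimate:integral} with a constant $C_I$ depending only on $\alpha,\gamma,\beta$.

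The hard part is the second step. One must choose, term by term and uniformly over $\beta\in[0,3\gamma)$, how much regularity to extract from the semigroup difference as opposed to the residual $S_{t-m}$, so that simultaneously the Hölder exponent in $v-u$ stays strictly above the summability threshold and the surviving singular factor $(t-m)^{-\kappa}$ keeps $\kappa<1$; it is precisely the need to accommodate this singular weight together with the loss of spatial regularity along the remainder that forces the parabolic form of the sewing lemma and accounts for essentially all of the technical work.
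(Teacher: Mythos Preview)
The paper does not give its own proof of this theorem; it is quoted verbatim from \cite[Theorem~4.5]{GHN2021}, so there is nothing to compare against in the present manuscript. Your sketch is the standard proof and is essentially the one carried out in \cite{GHN2021}: one freezes the terminal time, sets up the germ $\Xi_{u,v}=S_{t-u}y_uX_{u,v}+S_{t-u}y'_u\mathbb{X}_{u,v}$, computes $\delta\Xi$ exactly as you wrote (your five-term decomposition is correct), and then applies a sewing lemma that tolerates a singular weight $(t-m)^{-\kappa}$ with $\kappa<1$. The only place where one has to be slightly careful is the direct estimate of $\Xi_{s,t}$: for small $\beta$ the individual terms $S_{t-s}y_sX_{s,t}$ and $S_{t-s}y'_s\mathbb{X}_{s,t}$ produce powers $(t-s)^{\gamma}$ and $(t-s)^{2\gamma}$ rather than $(t-s)^{3\gamma-\beta}$, so to land exactly on the stated exponent one either restricts to $t-s\le 1$ (as is implicitly done throughout the paper) or records the sharper bound and then coarsens. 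Apart from this bookkeeping point your argument is complete.
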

	We now specify which type of nonlinearities we consider.
	\begin{assumptions}\label{ass:Nonlin}
		\begin{itemize}
			\item[1)] There exists a constant $\sigma_F\in[0,1)$ such that the drift term $F:E_{\alpha}\to E_{\alpha-\sigma_F}$ is Lipschitz continuous, with constant $\widetilde{C}_F>0$, which in particular implies a linear growth condition $\norm{F(x)}_{\alpha-\sigma_F}\leq \norm{F(0)}_{\alpha-\sigma_F}+\widetilde{C}_F\norm{x}_{\alpha}$. We further set $C_F:=\max\{\norm{F(0)}_{\alpha-\sigma_F},\widetilde{C}_F\}$.
			\item[2)] There exists a $\sigma_G\in[0,\gamma)$ such that for any $\vartheta \in [0, 2\gamma]$ the diffusion term $G:E_{\alpha-\vartheta}\to E_{\alpha-\vartheta-\sigma_G}$ is bounded and three times Fr\'{e}chet differentiable with bounded derivatives, which means that $\norm{D^i G}_{\mathcal{L}(E_{\alpha-\vartheta}^{\otimes i};E_{\alpha-\vartheta-\sigma_G})}<\infty$ for $i=1,2,3$. 
			We further set $C_G:=\sup\limits_{\vartheta\in [0,2\gamma]}\max\limits_{i=1,2,3} \norm{D^i G}_{\mathcal{L}(E_{\alpha-\vartheta}^{\otimes i};E_{\alpha-\vartheta-\sigma_G})}$ and suppose that $C_G<\infty$.
			
		\end{itemize}
	\end{assumptions}
	Under these conditions, it is known that \eqref{eq:MainEq} has for every $y_0\in E_\alpha$ a unique global mild solution which is a controlled rough path $(y,y^\prime)\in \cD^{2\gamma}_{X,\alpha}([0,T])$ such that 
	\begin{align}\label{eq:mildSolution}
		y_t=S_t y_0 +\int_0^t S_{t-r}F(y_r)~\txtd r +\int_0^t S_{t-r}G(y_r)~\txtd \X_r,
	\end{align}
	for $t\geq 0$ and $y^\prime=G(y)$, see \cite[Theorem 3.8]{HN22}. One major ingredient was there, that the Gubinelli derivative satisfies $y^\prime=G(y)$. Using this identity one can establish an estimate without quadratic terms as in \cite[Lemma 4.7]{GHN2021}.
	\begin{lemma}{\em (\cite[Lemma 3.6]{HN22}).} 
		Let Assumption \ref{ass:Nonlin} be satisfied and $(y,G(y))\in \cD^{2\gamma}_{X,\alpha}([s,t])$ be a controlled rough path. Then we have $(G(y),(G(y))^\prime)\in \cD^{2\gamma}_{X,\alpha-\sigma_G}([s,t])$ with $(G(y_t))^\prime=DG(y_t)\circ G(y_t)$ and the estimate 
		\begin{align}\label{ineq:Nonlin}
			\norm{G(y),(G(y))^\prime}_{\cD^{2\gamma}_{X,\alpha-\sigma_G}([s,t])}\leq C_G \rho_{\gamma,[s,t]}(\X)(1+\norm{y,G(y)}_{\cD^{2\gamma}_{X,\alpha}([s,t])}).
		\end{align}
	\end{lemma}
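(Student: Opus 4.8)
The plan is to prove this by directly estimating each of the five seminorms appearing in $\|G(y),(G(y))'\|_{\cD^{2\gamma}_{X,\alpha-\sigma_G}([s,t])}$, exploiting crucially the identity $y'=G(y)$ so that no quadratic term in $\|y,G(y)\|_{\cD^{2\gamma}_{X,\alpha}}$ ever appears. First I would record the ansatz for the Gubinelli derivative of $G(y)$: by the chain rule for controlled rough paths one expects $(G(y))' = DG(y)\circ y' = DG(y)\circ G(y)$, which is the claimed formula; one must check it indeed produces a controlled rough path, i.e. that the induced remainder lies in the right spaces. Write $G(y)_{s,t} = DG(y_s)y_{s,t} + \text{(second-order Taylor remainder)}$ and then substitute $y_{s,t} = y'_s X_{s,t} + R^y_{s,t} = G(y_s)X_{s,t} + R^y_{s,t}$, so that
\begin{align*}
R^{G(y)}_{s,t} = G(y)_{s,t} - (G(y))'_s X_{s,t} = DG(y_s)R^y_{s,t} + \big(G(y)_{s,t} - DG(y_s)y_{s,t}\big).
\end{align*}
The first piece is controlled using $\|DG\|_{\cL}\le C_G$ and $[R^y]_{\gamma,\alpha-\gamma}$, $[R^y]_{2\gamma,\alpha-2\gamma}$ from $\|y,G(y)\|_{\cD^{2\gamma}_{X,\alpha}}$; the bracketed piece is the mean-value/Taylor remainder of $G$, of the form $\int_0^1 (DG(y_s+\tau y_{s,t}) - DG(y_s))\,d\tau\, y_{s,t}$, which is quadratic in $y_{s,t}$ and hence controlled by $C_G \|y\|_{\gamma}^2 \lesssim C_G\|y,G(y)\|^2_{\cD^{2\gamma}_{X,\alpha}}$ — but here one uses that $\|y\|_{C^\gamma(E_{\alpha-2\gamma})}$ is itself bounded by $\rho_{\gamma}(\X)(1+\|y,G(y)\|_{\cD^{2\gamma}_{X,\alpha}})$ (since $y_{s,t}=G(y_s)X_{s,t}+R^y_{s,t}$ and $\|G\|_\infty\le C_G$ by boundedness of $G$), which collapses the quadratic dependence into a linear one times $\rho_\gamma(\X)$. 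This is the standard mechanism from \cite[Lemma 4.7]{GHN2021} refined via $y'=G(y)$ as in \cite[Lemma 3.6]{HN22}.

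Next I would handle the remaining terms in the norm. The sup-norm $\|G(y)\|_{\infty,E_{\alpha-\sigma_G}}\le C_G$ is immediate from boundedness of $G$. For $\|(G(y))'\|_{\infty,E_{\alpha-\sigma_G-\gamma}} = \|DG(y)\circ G(y)\|_\infty \le C_G^2$ one uses boundedness of $DG$ and of $G$. For $[(G(y))']_{\gamma,E_{\alpha-\sigma_G-2\gamma}}$ one writes
\begin{align*}
(DG(y_t)\circ G(y_t)) - (DG(y_s)\circ G(y_s)) = (DG(y_t)-DG(y_s))\circ G(y_t) + DG(y_s)\circ G(y)_{s,t},
\end{align*}
and bounds the first term by $\|D^2G\|_\infty \|y_{s,t}\|_{\alpha-2\gamma}\,C_G$ and the second by $C_G \|G(y)_{s,t}\|$, in each case producing $\|y\|_{C^\gamma}$, which as above is $\lesssim \rho_\gamma(\X)(1+\|y,G(y)\|_{\cD^{2\gamma}_{X,\alpha}})$. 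Collecting all five contributions and using $\rho_{\gamma,[s,t]}(\X)\le \max\{1,\rho_{\gamma,[s,t]}(\X)\}$ to absorb stray constants, together with $1+a^2 \lesssim (1+a)^2$ but then the linearization trick reducing it back, yields the asserted bound with the single factor $\rho_{\gamma,[s,t]}(\X)(1+\|y,G(y)\|_{\cD^{2\gamma}_{X,\alpha}([s,t])})$ on the right.

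The main obstacle — and the reason the statement is delicate rather than routine — is the bookkeeping of the loss of spatial regularity: $G$ maps $E_{\alpha-\vartheta}$ into $E_{\alpha-\vartheta-\sigma_G}$ for every $\vartheta\in[0,2\gamma]$, and one must verify at each step that the index on which $DG$, $D^2G$ act and the index in which the output is measured are consistent with the target space $\cD^{2\gamma}_{X,\alpha-\sigma_G}$, i.e. that the relevant quantities $\|y_{s,t}\|_{\alpha-2\gamma}$, $[R^y]_{\gamma,\alpha-\gamma}$, $[R^y]_{2\gamma,\alpha-2\gamma}$ live in spaces where the corresponding derivative bound $\|D^iG\|_{\cL(E_{\alpha-\vartheta}^{\otimes i};E_{\alpha-\vartheta-\sigma_G})}\le C_G$ is available — this is exactly why Assumption~\ref{ass:Nonlin}(2) is phrased uniformly over $\vartheta\in[0,2\gamma]$. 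Once the index arithmetic is set up correctly, every individual estimate is an application of boundedness of $G$ and its derivatives plus the elementary bound $\|y\|_{C^\gamma(E_{\alpha-2\gamma})}\lesssim \rho_{\gamma,[s,t]}(\X)(1+\|y,G(y)\|_{\cD^{2\gamma}_{X,\alpha}([s,t])})$; I would therefore present the index verification carefully and then treat the five estimates in quick succession.
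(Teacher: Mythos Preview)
The paper does not prove this lemma at all; it is quoted verbatim from \cite[Lemma 3.6]{HN22} and then used as a black box. Your proposal correctly reproduces the standard argument one finds there: the chain-rule ansatz $(G(y))'=DG(y)\circ G(y)$, the Taylor decomposition $R^{G(y)}_{s,t}=DG(y_s)R^y_{s,t}+(G(y)_{s,t}-DG(y_s)y_{s,t})$, and --- the crucial point --- the linearization of the quadratic Taylor remainder via $y_{s,t}=G(y_s)X_{s,t}+R^y_{s,t}$ together with the \emph{boundedness} of $G$, which replaces one factor of $\|y,G(y)\|_{\cD^{2\gamma}_{X,\alpha}}$ by $C_G\rho_{\gamma}(\X)$. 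Your identification of the index bookkeeping (the uniform $\vartheta\in[0,2\gamma]$ hypothesis in Assumption~\ref{ass:Nonlin}(2)) as the only genuinely delicate part is exactly right.

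One small caveat on the final packaging: as you implicitly notice, the sup-norm pieces $\|G(y)\|_{\infty,\alpha-\sigma_G}$ and $\|DG(y)\circ G(y)\|_{\infty,\alpha-\sigma_G-\gamma}$ are bounded by constants depending only on $C_G$, with no factor of $\rho_{\gamma}(\X)$. Your remark about ``$\rho\le\max\{1,\rho\}$'' does not recover a genuine factor of $\rho_{\gamma}(\X)$ in front of these terms, so the displayed inequality~\eqref{ineq:Nonlin} should really be read with a generic constant (possibly of the form $C_G(1+\rho_{\gamma}(\X))$ rather than $C_G\,\rho_{\gamma}(\X)$) --- this is how the estimate is actually used downstream in the paper, where $\rho_{\gamma}(\X)$ is always paired with another such factor (see e.g.\ the definition of $P_3$ in Lemma~\ref{lem:IntegralEstimates}), so the distinction is harmless. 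Apart from this cosmetic point your sketch is complete.
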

	So far we assumed that $\X$ is a deterministic path. Since, the main objective is to prove the existence of a random attractor for~\eqref{eq:MainEq}, we work from now on in a stochastic setting. Therefore, we fix a probability space $(\Omega,\mathcal{F},\P)$ and recall the notion of a metric dynamical system and rough path cocycles.
	\begin{definition}\label{def:MDS}
		The quadrupel $(\Omega,\mathcal{F},\P,(\theta_t)_{t\in \R})$, where $\theta_t:\Omega\to\Omega$ is measure-preserving, is called a metric dynamical system if
		\begin{itemize}
			\itemsep -4pt
			\item[i)] $\theta_0=\Id_\Omega$,
			\item[ii)] $(t,\omega)\mapsto \theta_t\omega$ is $\mathcal{B}(\R)\otimes \mathcal{F}-\mathcal{F}$ measurable,
			\item[iii)] $\theta_{t+s}=\theta_t\circ \theta_s$ for all $t,s \in \R$. 
		\end{itemize}
		We call it an ergodic metric dynamical system if for any $A\in \mathcal{F}$, which is $(\theta_t)_{t\in \R}$ invariant, we have $\P(A)\in \{0,1\}$.
	\end{definition}
	\begin{definition}{\em (\cite[Definition 2]{BRiedelScheutzow}).}\label{def:RPCocycle}
		We call a pair
		\begin{align*}
			\mathbf{X}=(X,\xx):\Omega\to C^{\gamma}_{{\rm loc}}(\R;\mathbb{R}) \times C^{2\gamma}_{{\rm loc}}(\Delta_\R;\R\otimes \R) 
		\end{align*}
		a ($\gamma$-H\"older) rough path cocycle if $\mathbf{X}|_{[0,T]}(\omega)$ is a $\gamma$-H\"older rough path for every $T>0$ and $\omega\in\Omega$ and the cocycle property $X_{s,s+t}(\omega)= X_t(\theta_s\omega)$ as well as $\xx_{s,s+t}(\omega)=\xx_{t,0}(\theta_s\omega)$ holds true for every $s\in \R,t\in[0,\infty)$ and $\omega\in\Omega$.
	\end{definition}
	In order to study the asymptotic behavior of~\eqref{eq:MainEq}, we have to estimate the norm of the solution $y$ on large time intervals, where the main challenge is to control the $\gamma$-H\"older norm of the noise. For this purpose, we introduce a sequence of greedy time steps according to \cite{RiedelVarzaneh23} and a continuous control.~The number of Greedy time steps within an interval has better integrability properties than the H\"older norms of the noise. 
	
	
	\begin{definition}\label{def:greedyTimes}
		Let $\eta\in [0,\gamma)$, $\chi>0$, $I=[a,b]\subset\R$,  and $\X(\omega):=(X(\omega),\xx(\omega))$, $\omega\in \Omega$, be a $\gamma$-H\"older rough path cocycle.
		\begin{itemize}
			\item[i)] We define for $a\leq s\leq t\leq b$
			\begin{align*}
				W_{s,t}(\omega)&:=W_{\X(\omega),\gamma,\eta}(s,t)\\
				&:=\sup\limits_{\cP\subset [s,t]} \left\{\sum_{j=0}^{\abs{\cP}} (\kappa_{j+1}-\kappa_j)^{-\frac{\eta}{\gamma-\eta}}\left(\abs{X(\omega)_{\kappa_j,\kappa_{j+1}}}^{\frac{1}{\gamma-\eta}}+\abs{\xx(\omega)_{\kappa_j,\kappa_{j+1}}}^{\frac{1}{2(\gamma-\eta)}}\right) \right\},
			\end{align*}
			where the supremum is taken over all partitions $\cP=\{s=\kappa_0<\kappa_1<\ldots<\kappa_n=t\}$ of $[s,t]$.
			\item[ii)] We define the sequence of greedy times $(\tau_n(\omega))_{n\in\N_0}$ through $\tau_0(\omega):=a$ and recursively for $n\in \N_0$
			\begin{align*}
				\tau_{n+1}(\omega):=\tau_{n+1,\eta,\omega}^I(\chi):=\sup \left\{\tau\in [\tau_{n}(\omega),b]~|~ W^{\gamma-\eta}_{\tau_{n}(\omega),\tau}(\omega)\leq \chi\right\}.
			\end{align*}
			\item[iii)] Let  
			\begin{align*}
				N_{a,b}(\omega):=N(I,\eta,\chi,\X(\omega)):=\inf\{n> 0~|~\tau_n^I(\omega)=b\},
			\end{align*}
			be the number of greedy time steps in the interval $I$.
		\end{itemize}
	\end{definition}
	For a better readability, we omit in the following the  dependencies of $(\tau_n(\omega))_{n\in \N_0},N_{a,b}(\omega)$ and $W_{s,t}(\omega)$ on $\gamma,\eta, \chi$ and $I$, whenever this dependence is clear from the context. \\
	
	The following properties are direct consequences of the previous definition. 
	\begin{lemma}\label{lem:GreedTime} 
		Let $\eta\in [0,\gamma)$, $\chi>0$, $I=[a,b]\subset\R,~ \omega\in \Omega$ and $\X:=(X,\xx)$ be a $\gamma$-Hölder rough path cocycle.
		\begin{itemize} 
			\item[i)] $W$ is a continuous control, this means that for $s\leq u\leq t$ the following subadditive property
			is satisfied \begin{align*}
				W_{s,u}(\omega)+W_{u,t}(\omega)\leq W_{s,t}(\omega).
			\end{align*}
			\item[ii)] For $s\leq t$ we have the bounds
			\begin{align*}
				N_{s,t}(\omega)&\leq W_{s,t}(\omega)\chi^{-\frac{1}{\gamma-\eta}}+1,\\
				W_{s,t}(\omega)&\leq (t-s)\left(\left[X(\omega)\right]_{\gamma,[s,t]}^{\frac{1}{\gamma-\eta}}+\left[\xx(\omega)\right]^{\frac{1}{2(\gamma-\eta)}}_{2\gamma,\Delta_{[s,t]}}\right).
			\end{align*}
			\item[iii)] We have $\left[X(\theta_r \omega)\right]_{\gamma,[s,t]}=\left[X(\omega)\right]_{\gamma,[s+r,t+r]}$ and $\left[\XX(\theta_r \omega)\right]_{2\gamma,\Delta_{[s,t]}}=\left[\XX(\omega)\right]_{2\gamma,\Delta_{[s+r,t+r]}}$ for $r\in \R$. In particular, the same holds for $W_{s,t}(\theta_{r}\omega)=W_{s+r,t+r}(\omega)$ and $N_{s,t}(\theta_{r}\omega)=N_{s+r,t+r}(\omega)$. 
		\end{itemize}
	\end{lemma}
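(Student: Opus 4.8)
The plan is to verify each of the three items directly from Definition~\ref{def:greedyTimes}, proceeding in the order i), ii), iii), since ii) and iii) both lean on i). For item i), the subadditivity of $W$, I would argue as follows: fix $s\leq u\leq t$ and take an arbitrary partition $\cP_1$ of $[s,u]$ and an arbitrary partition $\cP_2$ of $[u,t]$. Their union $\cP_1\cup\cP_2$ (which includes $u$ as a common point) is a partition of $[s,t]$, and the sum defining $W$ over $\cP_1\cup\cP_2$ is exactly the sum over $\cP_1$ plus the sum over $\cP_2$ because the summand attached to each consecutive pair $(\kappa_j,\kappa_{j+1})$ depends only on that pair. Hence the sum over $\cP_1$ plus the sum over $\cP_2$ is bounded by $W_{s,t}(\omega)$, and taking the supremum first over $\cP_2$ and then over $\cP_1$ yields $W_{s,u}(\omega)+W_{u,t}(\omega)\leq W_{s,t}(\omega)$. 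Continuity of the control (in the sense that $W_{s,t}\to 0$ as $t\downarrow s$) follows from the upper bound in ii), which I prove next, together with the continuity of the H\"older norms from Remark~\ref{rem:RPContHolderNorm}.

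For the second bound in item ii), I would use the trivial one-point partition $\cP=\{s,t\}$ in the definition of $W_{s,t}$, giving
\begin{align*}
W_{s,t}(\omega)\geq (t-s)^{-\frac{\eta}{\gamma-\eta}}\left(\abs{X(\omega)_{s,t}}^{\frac{1}{\gamma-\eta}}+\abs{\xx(\omega)_{s,t}}^{\frac{1}{2(\gamma-\eta)}}\right);
\end{align*}
no wait, that is the wrong direction. Instead, for a \emph{general} partition $\cP=\{s=\kappa_0<\dots<\kappa_n=t\}$ I bound each term by $\abs{X(\omega)_{\kappa_j,\kappa_{j+1}}}\leq [X(\omega)]_{\gamma,[s,t]}(\kappa_{j+1}-\kappa_j)^\gamma$ and similarly $\abs{\xx(\omega)_{\kappa_j,\kappa_{j+1}}}\leq [\xx(\omega)]_{2\gamma,\Delta_{[s,t]}}(\kappa_{j+1}-\kappa_j)^{2\gamma}$, so that the $j$-th summand is at most $\left([X(\omega)]_{\gamma,[s,t]}^{1/(\gamma-\eta)}+[\xx(\omega)]_{2\gamma,\Delta_{[s,t]}}^{1/(2(\gamma-\eta))}\right)(\kappa_{j+1}-\kappa_j)$, using that the exponents are chosen precisely so $(\kappa_{j+1}-\kappa_j)^{-\eta/(\gamma-\eta)}\cdot(\kappa_{j+1}-\kappa_j)^{\gamma/(\gamma-\eta)}=(\kappa_{j+1}-\kappa_j)$. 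Summing the telescoping increments $\sum_j(\kappa_{j+1}-\kappa_j)=t-s$ and taking the supremum over $\cP$ gives the claimed bound. For the first bound in ii), I invoke the definition of the greedy times: by construction $W_{\tau_n(\omega),\tau_{n+1}(\omega)}^{\gamma-\eta}(\omega)=\chi$ for every $n$ with $\tau_{n+1}(\omega)<b$ (here one uses continuity of $t\mapsto W_{\tau_n(\omega),t}(\omega)$ to see the supremum is attained with equality), hence by subadditivity from i), for $s\leq t$ with $N:=N_{s,t}(\omega)$ one has $(N-1)\chi^{1/(\gamma-\eta)}\leq\sum_{n=0}^{N-2}W_{\tau_n,\tau_{n+1}}\leq W_{s,t}(\omega)$, which rearranges to $N_{s,t}(\omega)\leq W_{s,t}(\omega)\chi^{-1/(\gamma-\eta)}+1$. (One must start the greedy construction at $s$ rather than $a$; since the definition is stated on a general interval $I$, this is just applying it with $I=[s,t]$.)

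Item iii) is the shift-covariance statement and is essentially bookkeeping. The identity $[X(\theta_r\omega)]_{\gamma,[s,t]}=[X(\omega)]_{\gamma,[s+r,t+r]}$ follows from the cocycle property $X_{u,u+v}(\omega)=X_v(\theta_u\omega)$ in Definition~\ref{def:RPCocycle}: for $\theta_r\omega$ one has $X(\theta_r\omega)_{p,q}=X_{p,q}(\theta_r\omega)=X_{r+p,r+q}(\omega)$, so the increments of $X(\theta_r\omega)$ on $[s,t]$ coincide with the increments of $X(\omega)$ on $[s+r,t+r]$, and the H\"older seminorm is computed from exactly those increments; the same reasoning with $\xx_{u,u+v}(\omega)=\xx_{v,0}(\theta_u\omega)$ gives the statement for $\xx$. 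Substituting these into the definition of $W$ — and noting that a partition of $[s,t]$ corresponds bijectively to a partition of $[s+r,t+r]$ via a shift by $r$, under which all increments $\kappa_{j+1}-\kappa_j$ and all rough path increments are preserved — yields $W_{s,t}(\theta_r\omega)=W_{s+r,t+r}(\omega)$, and then $N_{s,t}(\theta_r\omega)=N_{s+r,t+r}(\omega)$ follows because the greedy times transform the same way. The only place that needs a little care, and which I would flag as the main (minor) obstacle, is the claim used in ii) that $W_{\tau_n,\tau_{n+1}}^{\gamma-\eta}=\chi$ exactly at interior greedy times: this requires knowing that $t\mapsto W_{\tau_n(\omega),t}(\omega)$ is continuous and nondecreasing, monotonicity being immediate from i) and continuity following from the upper bound in ii) together with Remark~\ref{rem:RPContHolderNorm} — so the argument is not quite self-contained within Definition~\ref{def:greedyTimes} alone and one should cite those facts explicitly.
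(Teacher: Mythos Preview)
Your proposal is correct and follows essentially the same route as the paper's proof: the paper dismisses i) as ``straightforward'' and iii) as ``follows directly due to the cocycle property,'' and for ii) it uses exactly the subadditivity argument $(N_{s,t}-1)\chi^{1/(\gamma-\eta)}\leq\sum_{j=0}^{N_{s,t}-2}W_{\tau_j,\tau_{j+1}}\leq W_{s,t}$ together with the H\"older-norm bound on each summand that you describe. Your version is more explicit (in particular, you flag the continuity needed to get $W_{\tau_n,\tau_{n+1}}^{\gamma-\eta}=\chi$ at interior greedy times, which the paper uses implicitly), but the ideas are identical.
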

	\begin{proof}
		\begin{itemize}
			\item[i)] This is straightforward. 
			\item[ii)] Due to the subadditivity of $W_{s,t}(\omega)$ we get
			\begin{align*}
				N_{s,t}(\omega)-1\leq \sum_{j=0}^{N_{s,t}(\omega)-2} W_{\tau_j,\tau_{j+1}}(\omega)\chi^{-\frac{1}{\gamma-\eta}}\leq W_{s,\tau_{N_{s,t}(\omega)-1}}(\omega)\chi^{-\frac{1}{\gamma-\eta}}\leq W_{s,t}(\omega)\chi^{-\frac{1}{\gamma-\eta}},
			\end{align*}
			which leads to the first estimate. For the second inequality, we note that
			\begin{align*}
				&\sum_{j=0}^{\abs{\cP}} (\kappa_{j+1}-\kappa_j)^{-\frac{\eta}{\gamma-\eta}}\left(\abs{X(\omega)_{\kappa_j,\kappa_{j+1}}}^{\frac{1}{\gamma-\eta}}+\abs{\xx(\omega)_{\kappa_j,\kappa_{j+1}}}^{\frac{1}{2(\gamma-\eta)}}\right) \\
				&=\sum_{j=0}^{\abs{\cP}} (\kappa_{j+1}-\kappa_j)^{1-\frac{\gamma}{\gamma-\eta}}\left(\abs{X(\omega)_{\kappa_j,\kappa_{j+1}}}^{\frac{1}{\gamma-\eta}}+\abs{\xx(\omega)_{\kappa_j,\kappa_{j+1}}}^{\frac{1}{2(\gamma-\eta)}}\right) \\
				&\leq \sum_{j=0}^{\abs{\cP}} (\kappa_{j+1}-\kappa_j)\left(\left(\frac{\abs{X(\omega)_{\kappa_j,\kappa_{j+1}}}}{(\kappa_{j+1}-\kappa_j)^\gamma}\right)^{\frac{1}{\gamma-\eta}}+\left(\frac{\abs{\xx(\omega)_{\kappa_j,\kappa_{j+1}}}}{(\kappa_{j+1}-\kappa_j)^{2\gamma}}\right)^{\frac{1}{2(\gamma-\eta)}}\right) \\
				&\leq \sum_{j=0}^{\abs{\cP}} (\kappa_{j+1}-\kappa_j)\left(\left[X(\omega)\right]_{\gamma,[\kappa_j,\kappa_{j+1}]}^{\frac{1}{\gamma-\eta}}+\left[\xx(\omega)\right]_{2\gamma,\Delta_{[\kappa_j,\kappa_{j+1}]}}^{\frac{1}{2(\gamma-\eta)}}\right) \\
				&\leq (t-s)\left(\left[X(\omega)\right]_{\gamma,[s,t]}^{\frac{1}{\gamma-\eta}}+\left[\xx(\omega)\right]_{2\gamma,\Delta_{[s,t]}}^{\frac{1}{2(\gamma-\eta)}}\right),
			\end{align*}
			yielding the second estimate, since the right-hand side is independent of the choice of the partition.
			\item[iii)] Follows directly due to the cocycle property of the rough path cocycle $\X(\omega)$.
		\end{itemize}
		\qed
	\end{proof}
	\begin{theorem}{\em (\cite[Theorem 2.13]{RiedelVarzaneh23}).}\label{thm:VarRiedelEst} There exists a constant $\widetilde{M}>1$ and $\chi \in (0,1)$, $\eta\in (\sigma_G,\gamma)$ with $e^{\widetilde{M}}\chi^{\gamma-\eta}\leq \frac{1}{2}$ such that the following statements are true. 
		\begin{itemize}
			\item[i)] For $s\leq t$ such that $t-s\leq 1$, $t-s\leq (4\widetilde{M})^{-\frac{1}{1-\max\{\sigma_F,2\gamma\}}}=:d$ and any fixed $\omega\in \Omega$ the solution $(y,y^\prime)$ of \eqref{eq:MainEq} satisfies the inequality
			\begin{align}\label{ineq:VarRiedelEst1}
				\begin{split}
					&\sup\limits_{r\in[s,t]} \norm{y_r}_{\alpha}\leq \sup\limits_{0\leq n \leq N([s,t],\eta,\chi,\X(\omega))-1}\norm{y,y^\prime}_{\cD^{2\gamma}_{X(\omega),\alpha}([\tau_n,\tau_{n+1}])} \\
					&\leq \left(e^{N([s,t],\eta,\chi,\X(\omega))\widetilde{M}}\norm{y_s}_\alpha+\frac{e^{N([s,t],\eta,\chi,\X(\omega))\widetilde{M}+\widetilde{M}}-1}{e^{\widetilde{M}}-1}P(\left[X(\omega)\right]_{\gamma,[s,t]},\left[\xx(\omega)\right]_{2\gamma,\Delta_{[s,t]}})\right),
				\end{split}
			\end{align}
			where $P(x,y):=1+x+y+x(x^2+y).$
			\item[ii)] For arbitrary intervals with $1\geq t-s>d$ and any fixed $\omega\in \Omega$, the solution $(y,y^\prime)$ of \eqref{eq:MainEq} satisfies 
			for a deterministic constant $M>0$ and $\widetilde{N}:=\lceil d^{-1}(t-s)\rceil$ the estimate \begin{align}\label{ineq:VarRiedelEst}
				\begin{split}
					&\norm{y,y^\prime}_{\cD^{2\gamma}_{X(\omega),\alpha}([s,t])}\leq \widetilde{N}\left(M N_{s,t}(\omega)(1+\left[X(\omega)\right]_{\gamma,[s,t]})e^{N_{s,t}(\omega)\widetilde{M}}\right)^{\widetilde{N}+1}\norm{y_{s}}_\alpha\\
					&+M \widetilde{N}N_{s,t}(\omega)(1+\left[X(\omega)\right]_{\gamma,[s,t]})\frac{e^{N_{s,t}(\omega)\widetilde{M}+\widetilde{M}}-1}{e^{\widetilde{M}}-1}P(\left[X(\omega)\right]_{\gamma,[s,t]},\left[\xx(\omega)\right]_{2\gamma,\Delta_{[s,t]}})\\
					&\times\sum_{k=1}^{\widetilde{N}}  \left(M N_{s,t}(\omega)(1+\left[X(\omega)\right]_{\gamma,[s,t]})e^{N_{s,t}(\omega)\widetilde{M}}\right)^k.
				\end{split}
			\end{align}
			
		\end{itemize}
	\end{theorem}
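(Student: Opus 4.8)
This is \cite[Theorem 2.13]{RiedelVarzaneh23}; I sketch the argument. The plan is to first prove a self-improving a priori bound on a \emph{single} greedy interval $[\tau_n,\tau_{n+1}]$, on which by construction $W^{\gamma-\eta}_{\tau_n,\tau_{n+1}}(\omega)\leq\chi$, and then to chain these bounds: over the $N([s,t],\eta,\chi,\X(\omega))$ greedy pieces for part~i) (which is legitimate precisely because $t-s\leq d$ forces every greedy piece to have length $\leq d$), and over an additional, purely deterministic, number of sub-intervals of length $\leq d$ for part~ii).

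\emph{Local estimate.} Fix $J=[\tau_n,\tau_{n+1}]$ with $|J|\leq d\leq1$, restart the mild-solution identity \eqref{eq:mildSolution} at $\tau_n$, and bound each of the five seminorms in \eqref{g:norm}: the semigroup term via the analytic bounds \eqref{hg:1}; the drift $\int S_{\cdot-r}F(y_r)\,\txtd r$ via \eqref{hg:1} and the linear growth in Assumption~\ref{ass:Nonlin}~1), which produces factors $(t-s)^{1-\sigma_F}$ and fractional powers thereof; and the rough integral $\int S_{\cdot-r}G(y_r)\,\txtd\X_r$ via Theorem~\ref{integral} together with the controlled-rough-path bound \eqref{ineq:Nonlin} for $(G(y),(G(y))')$. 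The crucial device of \cite{RiedelVarzaneh23}---the modified sewing lemma---is to re-run the estimate \eqref{estimate:integral} with the continuous control $W$ in place of the Hölder quantity $\rho_{\gamma,J}(\X)$; then, because $W^{\gamma-\eta}_{\tau_n,\tau_{n+1}}\leq\chi$ and $\eta>\sigma_G$, every noise contribution to the right-hand side carries a power of $\chi^{\gamma-\eta}$, while the deterministic contributions carry $(t-s)^{1-\max\{\sigma_F,2\gamma\}}\leq d^{\,1-\max\{\sigma_F,2\gamma\}}=(4\widetilde{M})^{-1}$. Choosing $\widetilde{M}$ large, then $\chi$ small and $\eta$ with $e^{\widetilde{M}}\chi^{\gamma-\eta}\leq\tfrac12$, all terms on the right proportional to $\norm{y,y'}_{\cD^{2\gamma}_{X(\omega),\alpha}(J)}$ are absorbed into the left, giving
\[
\norm{y,y'}_{\cD^{2\gamma}_{X(\omega),\alpha}([\tau_n,\tau_{n+1}])}\leq e^{\widetilde{M}}\norm{y_{\tau_n}}_\alpha+e^{\widetilde{M}}\,P\big([X(\omega)]_{\gamma,[\tau_n,\tau_{n+1}]},[\xx(\omega)]_{2\gamma,\Delta_{[\tau_n,\tau_{n+1}]}}\big),
\]
where the polynomial $P(x,y)=1+x+y+x(x^2+y)$ records the third-order Taylor expansion of $G$ (its derivatives up to order three enter through \eqref{ineq:Nonlin}) and the nested controlled-rough-path structure.

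\emph{Chaining.} For part~i), since $\norm{y_{\tau_{n+1}}}_\alpha\leq\norm{y,y'}_{\cD^{2\gamma}_{X(\omega),\alpha}([\tau_n,\tau_{n+1}])}$, since $P$ is monotone, and since the Hölder seminorms over $[\tau_n,\tau_{n+1}]$ are dominated by those over $[s,t]$, iterating the local bound from $n=0$ to $N-1$ and summing the geometric series $\sum_{k=1}^N e^{k\widetilde{M}}\leq\frac{e^{(N+1)\widetilde{M}}-1}{e^{\widetilde{M}}-1}$ yields \eqref{ineq:VarRiedelEst1} with $N=N([s,t],\eta,\chi,\X(\omega))$. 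For part~ii), split $[s,t]$ into $\widetilde{N}=\lceil d^{-1}(t-s)\rceil$ consecutive blocks of length $\leq d$ and apply part~i) on each; since restarting the greedy construction at an interior point only makes the greedy times sparser (subadditivity of $W$, Lemma~\ref{lem:GreedTime}~i)), the local greedy count and local Hölder norms on each block are dominated by $N_{s,t}(\omega)$ and $[X(\omega)]_{\gamma,[s,t]}$, and using $N_{s,t}(\omega)\leq W_{s,t}(\omega)\chi^{-1/(\gamma-\eta)}+1$ (Lemma~\ref{lem:GreedTime}~ii)) one bounds every per-block factor from above by the common quantity $M N_{s,t}(\omega)(1+[X(\omega)]_{\gamma,[s,t]})e^{N_{s,t}(\omega)\widetilde{M}}$ for a deterministic $M>0$. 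Chaining the $\widetilde{N}$ inequalities of the form $\norm{y_{s_{k+1}}}_\alpha\leq(\text{factor})\norm{y_{s_k}}_\alpha+(\text{factor})\cdot P$ and collecting the homogeneous part (a product over the blocks) and the forcing part (a sum of partial products) reproduces \eqref{ineq:VarRiedelEst}.

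\emph{Main obstacle.} The delicate step is the local estimate, specifically re-deriving the rough-convolution bound so that the driving noise enters only through the control $W$---hence through $\chi$, which can be made as small as needed---rather than through $\rho_{\gamma,J}(\X)$, which would blow up as $|J|\to0$ since $\eta<\gamma$. This requires a sewing/reconstruction argument adapted to a general continuous control in place of the $\gamma$-Hölder scale, together with careful bookkeeping of which of the five seminorms in \eqref{g:norm} limits the contraction; this is exactly the technical contribution of \cite{RiedelVarzaneh23}. Everything else is arithmetic: a geometric sum for part~i) and a telescoping product for part~ii).
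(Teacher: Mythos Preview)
Your sketch is correct and follows the paper's route. Part~i) is simply quoted from \cite{RiedelVarzaneh23} in the paper, and your outline of the underlying mechanism (modified sewing with the control $W$, absorption via $e^{\widetilde{M}}\chi^{\gamma-\eta}\leq\tfrac12$, then geometric chaining over greedy pieces) is exactly what is done there. For part~ii) the paper does the same block decomposition into $\widetilde{N}$ pieces of length $\leq d$ and the same telescoping iteration $\norm{y,y'}_{\cD([t_k,t_{k+1}])}\leq p_1\norm{y_{t_k}}_\alpha+p_2$ that you describe.

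The one ingredient you leave implicit but the paper makes explicit is the \emph{concatenation estimate}
\[
\norm{y,y'}_{\cD^{2\gamma}_{X(\omega),\alpha}([s,t])}\leq M(1+[X(\omega)]_{\gamma,[s,t]})\sum_{k=0}^{\widetilde{N}-1}\norm{y,y'}_{\cD^{2\gamma}_{X(\omega),\alpha}([t_k,t_{k+1}])},
\]
which is needed twice: once over the greedy intervals inside a block (this is where the prefactor $MN_{s,t}(\omega)(1+[X(\omega)]_{\gamma,[s,t]})$ in $p_1$ actually originates, turning the $\sup_n$ bound of part~i) into a genuine bound on $\norm{y,y'}_{\cD([t_k,t_{k+1}])}$), and once over the $\widetilde{N}$ blocks at the very end (producing the outer factor $\widetilde{N}$ and one more $M(1+[X])$). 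Your phrase ``collecting the homogeneous part \ldots\ and the forcing part'' covers the iteration but not this final summation step; without it you only get a bound on $\norm{y_t}_\alpha$, not on the full controlled-rough-path norm over $[s,t]$.
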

	\begin{proof}
		The inequality \eqref{ineq:VarRiedelEst} is proved in \cite[Theorem 2.13 i)]{RiedelVarzaneh23}. For the second inequality, note that for a partition $(t_k)_{k=0}^{\widetilde{N}}$ of $[s,t]$ there exists a constant $M>0$ such that
		\begin{align}\label{est:DiscreteGubNorm}
			\norm{y,y^\prime}_{\cD^{2\gamma}_{X(\omega),\alpha}([s,t])}\leq M(1+[X(\omega)]_{\gamma,[s,t]}) \sum_{k=0}^{\widetilde{N}-1} \norm{y,y^\prime}_{\cD^{2\gamma}_{X(\omega),\alpha}([t_k,t_{k+1}])}
		\end{align}
		as well as
		\begin{align}\label{ineq:VarRiedelEst2}
			\norm{y,y^\prime}_{\cD^{2\gamma}_{X(\omega),\alpha}([s,t])} \leq p_1 (\omega,[s,t])\norm{y_s}_\alpha+p_2(\omega,[s,t]),
		\end{align}
		where $t-s\leq d$ and
		\begin{align*}
			\begin{split}
				p_1(\omega,[s,t])&:=M N_{s,t}(\omega)(1+\left[X(\omega)\right]_{\gamma,[s,t]})e^{N_{s,t}(\omega)\widetilde{M}},\\
				p_2(\omega,[s,t])&:=M N_{s,t}(\omega)(1+\left[X(\omega)\right]_{\gamma,[s,t]})\frac{e^{N_{s,t}(\omega)\widetilde{M}+\widetilde{M}}-1}{e^{\widetilde{M}}-1}P(\left[X(\omega)\right]_{\gamma,[s,t]},\left[\xx(\omega)\right]_{2\gamma,\Delta_{[s,t]}}).
			\end{split}
		\end{align*}
		To extend the statement for arbitrary $t-s>d$, we define iteratively $t_0=s$ and $t_k:=\min\{d+t_{k-1},t\}$ for $k=1,\ldots,\widetilde{N}=\lceil d^{-1}(t-s)\rceil$. Then $t_{k+1}-t_k\leq d$, and we can use \eqref{ineq:VarRiedelEst2} to obtain 
		\begin{align*}
			&\norm{y,y^\prime}_{\cD^{2\gamma}_{X(\omega),\alpha}([t_k,t_{k+1}])}\leq p_1(\omega,[t_k,t_{k+1}])\norm{y_{t_{k}}}_\alpha+p_2(\omega,[t_k,t_{k+1}])\\
			&\leq p_1(\omega,[t_k,t_{k+1}])\norm{y,y^\prime}_{\cD^{2\gamma}_{X(\omega),\alpha}([t_{k-1},t_{k}])}+p_2(\omega,[t_k,t_{k+1}])\\
			&\leq p_1(\omega,[t_k,t_{k+1}])(p_1(\omega,[t_{k-1},t_{k}])\norm{y_{t_{k-1}}}_\alpha+p_2(\omega,[t_{k-1},t_{k}]))+p_2(\omega,[t_k,t_{k+1}])\\
			&\leq \prod_{l=0}^k p_1(\omega,[t_l,t_{l+1}])\norm{y_{s}}_\alpha+\sum_{n=0}^{k} p_2(\omega,[t_{k-n},t_{k-n+1}])\prod_{l=1}^n p_1(\omega,[t_{k-n+l},t_{k-n+l+1}])\\
			&\leq p_1(\omega,[s,t])^{\widetilde{N}}\norm{y_{s}}_\alpha+p_2(\omega,[s,t])\sum_{l=0}^{\widetilde{N}-1} p_1(\omega,[s,t])^l,
		\end{align*}
		since $p_1(\omega,[t_k,t_{k+1}])\leq p_1(\omega,[s,t])$ and analogously for $p_2$, for $k<\widetilde{N}-1$. Together with \eqref{est:DiscreteGubNorm}, this provides \eqref{ineq:VarRiedelEst}.
		\qed \\
	\end{proof}
	
	For a better readability, we define 
	\begin{align*}
		\begin{split}
			\widetilde{P}(\omega,[s,t])&:=M N_{s,t}(\omega)(1+\left[X(\omega)\right]_{\gamma,[s,t]})e^{N_{s,t}(\omega)\widetilde{M}},\quad
			P_1(\omega,[s,t]):=\widetilde{N}\widetilde{P}(\omega,[s,t])^{\widetilde{N}+1},\\
			P_2(\omega,[s,t])&:=M \widetilde{N}N_{s,t}(\omega)(1+\left[X(\omega)\right]_{\gamma,[s,t]})\frac{e^{N_{s,t}(\omega)\widetilde{M}+\widetilde{M}}-1}{e^{\widetilde{M}}-1}P(\left[X(\omega)\right]_{\gamma,[s,t]},\left[\xx(\omega)\right]_{2\gamma,\Delta_{[s,t]}})\\
			&\times\frac{\widetilde{P}(\omega,[s,t])^{\widetilde{N}}-1}{\widetilde{P}(\omega,[s,t])-1}.
		\end{split}
	\end{align*}
	With those abbreviations, the \eqref{ineq:VarRiedelEst} for the solution $(y,y^\prime)$ of \eqref{eq:MainEq} becomes
	\begin{align}\label{est:SolEst}
		\norm{y,y^\prime}_{\cD^{2\gamma}_{X(\omega),\alpha}([s,t])}\leq \norm{y_s}_\alpha P_1(\omega,[s,t])+P_2(\omega,[s,t]).
	\end{align}
	\begin{remark}\label{rem:ConstantsSolEst}
		\begin{itemize}
			\item[i)] We do not highlight the dependence of $\widetilde{N}$ on the length of the interval $[s,t]$, since we will later use these estimates only for intervals of length one. In this case, we have $\widetilde{N}=\lceil d^{-1}\rceil$, where we recall that $d=(4\widetilde{M})^{-\frac{1}{1-\max\{\sigma_F,2\gamma\}}}<1$. 
			\item[ii)] To our aims, the dependence of the constants in~\eqref{ineq:VarRiedelEst} on the nonlinear terms $F$ and $G$ is crucial since these influence the long-time behavior of~\eqref{eq:MainEq}. 
			One can observe that $\widetilde{M}$ has the form $\widetilde{M}=\max\{C_A,C_F,C_G\}\overline{M}$, where $\overline{M}$ does depend on $\eta,\gamma$ and $\sigma_G$, as well as on $F$ and $\lambda_A$. Modifying the sewing lemma \cite[Theorem 4.1]{GHN2021} in order to incorporate the control $W_{s,t}(\omega)$ instead of the H\"older norm of the corresponding rough path leads to this constant.
		\end{itemize}
	\end{remark}
	The main reason why we use the solution estimate \eqref{ineq:VarRiedelEst} is its integrability, in contrast to the bound derived in~\cite{HN22} and used in~\cite{KN23}. Moreover, it allows us to quantify the random radius of the absorbing set for~\eqref{eq:MainEq}, as specified in Definition~\ref{def:absorbing}, capturing its long-time behavior. To this aim the following condition on the noise is necessary, which was verified for the rough path lift of the fractional Brownian motion with Hurst parameter $H\in (\frac{1}{4},\frac{1}{2})$ in \cite[Proposition 2.9]{RiedelVarzaneh23}.
	\begin{assumptions}\label{ass:Noise}
		Let $X$ be a Gaussian process defined on an abstract Wiener space with associated Cameron-Martin space $\mathcal{H}$, such that it can be enhanced to a geometric $\gamma$-H\"older rough cocycle $\X:=(X,\XX)$. 
		For every $h\in \mathcal{H}$ we assume
		\begin{align*}
			\sup\limits_{\cP\subset [s,t]} \left\{\sum_{j=0}^{\abs{\cP}} \abs{h_{\kappa_{j+1}}-h_{\kappa_j}}^{\frac{1}{\gamma^\prime}} \right\}< \infty,
		\end{align*}
		where the supremum is taken over all partitions $\cP=\{s=\kappa_0<\ldots < \kappa_n <t\}$ of $[s,t]$ and
		\begin{align*}
			W_{\mathbf{h},\gamma^\prime,\eta}(0,1)\lesssim \norm{h}_{\mathcal{H}}^{\frac{1}{\gamma^\prime-\eta}},
		\end{align*}
		where $\gamma^\prime>0$ satisfies $\gamma+\gamma^\prime -2\eta>1$.
	\end{assumptions}

	\begin{lemma}{\em (\cite[Theorem 2.13 iii)]{RiedelVarzaneh23}).}\label{lem:IntegrableBound}
		Under the assumptions of Theorem \ref{thm:VarRiedelEst} and Assumption \ref{ass:Noise} the bound of $\norm{y,y^\prime}_{\mathcal{D}^{2\gamma}_{X,\alpha}([s,t])}$ in \eqref{ineq:VarRiedelEst} is integrable.~In particular, we have $N_{s,t}(\cdot)(1+[X(\cdot)]_{\gamma,[s,t]})e^{N_{s,t}(\cdot)\widetilde{M}}\in \bigcap_{p\geq 1} L^p(\Omega)$ which leads to
		\begin{align}\label{eq:IntegrableBound}
			P_1(\cdot, [s,t]),P_2(\cdot,[s,t]) \in \bigcap_{p\geq 1} L^p(\Omega).
		\end{align}
	\end{lemma}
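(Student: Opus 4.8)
The plan is to strip the estimate~\eqref{ineq:VarRiedelEst} down to its $\omega$-dependent core, reduce the claim to an exponential-moment bound for the number of greedy times $N_{s,t}$, and then obtain that bound via the concentration-of-measure scheme of Cass--Litterer--Lyons~\cite{Cass,FrizRiedel}, for which Assumption~\ref{ass:Noise} is tailored. \emph{Step~1 (reduction to a single random variable).} Set
\[
Z(\omega):=N_{s,t}(\omega)\big(1+[X(\omega)]_{\gamma,[s,t]}\big)e^{N_{s,t}(\omega)\widetilde{M}},
\]
so that $\widetilde{P}(\omega,[s,t])=M Z(\omega)$. Since the estimate is only used on intervals of length one, $\widetilde{N}=\lceil d^{-1}\rceil$ is deterministic (Remark~\ref{rem:ConstantsSolEst}~i)); hence $P_1(\cdot,[s,t])=\widetilde{N}(MZ)^{\widetilde{N}+1}$ is a deterministic monomial in $Z$. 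For $P_2$ one bounds $\frac{e^{(N_{s,t}+1)\widetilde{M}}-1}{e^{\widetilde{M}}-1}\le\frac{e^{\widetilde{M}}}{e^{\widetilde{M}}-1}e^{N_{s,t}\widetilde{M}}$, uses $N_{s,t}\ge1$ to get $N_{s,t}(1+[X]_{\gamma})\frac{e^{(N_{s,t}+1)\widetilde{M}}-1}{e^{\widetilde{M}}-1}\lesssim Z$, notes $[X(\cdot)]_{\gamma,[s,t]}\le Z$ so that $P([X]_{\gamma},[\xx]_{2\gamma})$ is dominated by a polynomial in $Z$ and $[\xx(\cdot)]_{2\gamma,\Delta_{[s,t]}}$, and writes $\frac{\widetilde{P}^{\widetilde{N}}-1}{\widetilde{P}-1}=\sum_{k=0}^{\widetilde{N}-1}(MZ)^k\le\widetilde{N}(1+(MZ)^{\widetilde{N}-1})$. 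Altogether $P_2(\cdot,[s,t])$ is bounded by a polynomial with deterministic coefficients in $Z$ and $[\xx(\cdot)]_{2\gamma,\Delta_{[s,t]}}$, so by Hölder's inequality it suffices to show that $Z$ and $[\xx(\cdot)]_{2\gamma,\Delta_{[s,t]}}$ lie in $\bigcap_{p\ge1}L^p(\Omega)$; this gives~\eqref{eq:IntegrableBound} and, via~\eqref{est:SolEst}, the integrability of the bound in~\eqref{ineq:VarRiedelEst}.

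\emph{Step~2 (Gaussian levels and reduction to $N_{s,t}$).} The maps $\omega\mapsto[X(\omega)]_{\gamma,[s,t]}$ and $\omega\mapsto[\xx(\omega)]_{2\gamma,\Delta_{[s,t]}}$ take values in (the closure of) the first, respectively second, homogeneous Wiener chaos of the Gaussian process $X$, so by Fernique's theorem and the Borell--Sudakov--Tsirelson inequality (Gaussian hypercontractivity on the second chaos) they have tails of order $e^{-cx^2}$ and $e^{-cx}$ and hence belong to $\bigcap_{p\ge1}L^p(\Omega)$; see e.g.~\cite[Ch.~11 and Ch.~15]{FrizVictoir10}. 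Using $1\le N_{s,t}\le\chi^{-1/(\gamma-\eta)}W_{s,t}+1$ (Lemma~\ref{lem:GreedTime}~ii)), the factorisation defining $Z$, and Hölder's inequality, the claim $Z\in\bigcap_{p\ge1}L^p(\Omega)$ follows once we prove
\[
\mathbb{E}\big[e^{\lambda N_{s,t}}\big]<\infty\qquad\text{for every }\lambda>0,
\]
which in particular also yields $N_{s,t}\in\bigcap_{p\ge1}L^p(\Omega)$.

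\emph{Step~3 (the tail of $N_{s,t}$).} By the cocycle property (Lemma~\ref{lem:GreedTime}~iii)) and the $\theta$-invariance of $\mathbb{P}$, $N_{s,t}$ has the same law as $N_{0,t-s}$, so we may take $[s,t]=[0,T]$. The second bound in Lemma~\ref{lem:GreedTime}~ii), the $\mathbb{P}$-a.s.\ finiteness of the Hölder norms, and continuity of the control give $N_{0,T}<\infty$ $\mathbb{P}$-a.s. To upgrade this to the required exponential moments we run the Cass--Litterer--Lyons argument. \emph{(i)} A deterministic estimate shows that $N_{0,T}$ changes in a controlled way under a Cameron--Martin translation $\X\mapsto T_h\X$: combining the superadditivity of the control $W$ (Lemma~\ref{lem:GreedTime}~i)), the translation formula for rough paths, and the bound $W_{\mathbf{h},\gamma',\eta}(0,T)\lesssim\|h\|_{\mathcal{H}}^{1/(\gamma'-\eta)}$ of Assumption~\ref{ass:Noise}, one obtains that $N_{0,T}(\X)<n/2$ implies $N_{0,T}(T_h\X)<n$ for every $h$ in the Cameron--Martin ball of radius $r_n\sim n^{\kappa}$, where the exponent $\kappa>\tfrac12$ is forced by the complementary-regularity condition $\gamma+\gamma'-2\eta>1$. \emph{(ii)} Since $N_{0,T}<\infty$ $\mathbb{P}$-a.s., $\mathbb{P}(N_{0,T}<n/2)\ge\tfrac12$ for all $n\ge n_0$, so the Borell--Sudakov--Tsirelson isoperimetric inequality on the abstract Wiener space gives $\mathbb{P}(N_{0,T}<n)\ge\Phi(r_n)$, hence $\mathbb{P}(N_{0,T}\ge n)\le 1-\Phi(r_n)\le\tfrac12e^{-r_n^2/2}\le e^{-cn^{2\kappa}}$ with $2\kappa>1$. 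Summing this Weibull tail against $e^{\lambda n}$ proves the exponential-moment bound, and Steps~2 and~1 then finish the proof.

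\emph{Main obstacle.} The delicate point is Step~3(i). The quantity $N_{s,t}$ is a strongly nonlinear combinatorial functional of the rough path (a supremum over partitions inside the control $W$), and one has to track precisely how a Cameron--Martin perturbation propagates through $W_{s,t}$ and then through the greedy-time construction, so as to certify that the admissible translation radius $r_n$ may be taken as large as $n^{\kappa}$ with $\kappa>\tfrac12$; it is exactly here that the quantitative hypothesis $\gamma+\gamma'-2\eta>1$ of Assumption~\ref{ass:Noise} --- complementary Young regularity in the $\eta$-shifted scale --- is indispensable. By contrast, the algebraic reduction of Step~1, the Wiener-chaos tail bounds of Step~2, and the isoperimetric step~3(ii) are routine.
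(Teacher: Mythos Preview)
The paper does not prove this lemma; it is quoted from \cite[Theorem~2.13~iii)]{RiedelVarzaneh23} without argument. Your proposal is a correct reconstruction of the method used there: the algebraic reduction in Step~1 to the single random variable $Z$ and the second-level H\"older seminorm is sound (note that $\widetilde N$ is deterministic for any fixed $[s,t]$ since $d$ is), the Wiener-chaos tail bounds in Step~2 are standard (with the minor caveat that $[X]_\gamma$ and $[\xx]_{2\gamma}$ are \emph{suprema} of first- and second-chaos variables rather than chaos elements themselves, so one invokes Fernique and the Borell-type tail inequalities for such suprema, as in \cite[Ch.~15]{FrizVictoir10}), and Step~3 is precisely the Cass--Litterer--Lyons concentration scheme \cite{Cass,FrizRiedel} that \cite{RiedelVarzaneh23} adapts to the $\eta$-shifted control $W$. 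Your identification of the translation estimate~3(i) as the substantive step, and of the complementary-regularity condition $\gamma+\gamma'-2\eta>1$ in Assumption~\ref{ass:Noise} as the hypothesis that forces the admissible Cameron--Martin radius to grow like $n^\kappa$ with $\kappa>\tfrac12$, is accurate.
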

	\begin{remark}\label{rem:RelaxAssump}
		For simplicity, the assumptions on the diffusion coefficient $G$ in \eqref{eq:MainEq} are slightly more restrictive compared to~\cite{GHN2021,HN22}. First, we require the derivatives of $G:E_{\alpha-\vartheta}\to E_{\alpha-\vartheta-\sigma_G}$ to be bounded for all values of $\vartheta\in [0,2\gamma]$. Additionally, $G$ must be a bounded function. Both of these limitations are the outcome of specific computations presented in \cite{RiedelVarzaneh23}, where they are used to modify the sewing lemma in order to incorporate the control $W$ instead of the H\"older norms of the rough path. As hinted in \cite[Remark 2.14]{RiedelVarzaneh23}, this condition can be replaced by the boundedness of the derivative of $DG(\cdot)\circ G(\cdot):E_{\alpha-\gamma}\to E_{\alpha-2\gamma-\sigma_G}$ as established in \cite{HN22}. 
	\end{remark}
	
	\subsection{Random dynamics}
	Since our goal is to investigate the long-time behavior of the solution to \eqref{eq:MainEq} we first introduce basic concepts from the theory of random dynamical systems \cite{Arnold}. 
	\begin{definition}
		A continuous random dynamical system on a separable Banach space $E$ over a metric dynamical system $(\Omega,\mathcal{F},\P,(\theta_t)_{t\in\R})$ is a mapping 
		\[\phi:\R^+\times \Omega\times E\to E, (t,\omega,x)\mapsto \phi(t,\omega,x),\]
		which is $(\mathcal{B}(\R^+)\otimes \mathcal{F}\otimes \mathcal{B}(E),\mathcal{B}(E))$-measurable and satisfies
		\begin{itemize}
			\itemsep -4pt
			\item[i)] $\phi(0,\omega,\cdot)=\Id_E$ for every $\omega\in \Omega$,
			\item[ii)] $\phi(t+s,\omega,x)=\phi(t,\theta_s \omega,\phi(s,\omega,x))$ for all $\omega\in \Omega$, $t,s\in \R^+$ and $x\in E$,					
			\item[iii)] the map $\phi(t,\omega,\cdot):E\to E$ is for every $t\in \R^+$ and $\omega\in \Omega$ continuous.
		\end{itemize}
	\end{definition}
	Condition ii) is referred to as the (perfect) cocycle property. Provided that the driving noise forms a rough path cocycle, we refer to~\cite{BRiedelScheutzow,Hofmanova2,HN20,HN22} for the generation of a random dynamical system from a rough (partial) differential equation.
	\begin{theorem}{\em (\cite[Theorem 3.12]{HN22}).}
		Under the Assumptions \ref{ass} and \ref{ass:Nonlin}, the solution operator of \eqref{eq:MainEq} with $y_0\in E_\alpha$ driven by a rough path cocycle generates a continuous random dynamical system on $E_\alpha$.
	\end{theorem}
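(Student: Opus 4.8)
The plan is to take $\phi$ to be the solution operator of \eqref{eq:MainEq} and to verify, in turn, the defining properties of a continuous random dynamical system on $E_\alpha$. By \cite[Theorem 3.8]{HN22}, recalled above, for every $\omega\in\Omega$ and every $x\in E_\alpha$ the equation \eqref{eq:MainEq} driven by $\X(\omega)$ admits a unique global mild solution $(y,G(y))\in\cD^{2\gamma}_{X(\omega),\alpha}([0,T])$ for all $T>0$, satisfying the fixed-point identity \eqref{eq:mildSolution} with $y'=G(y)$; we set $\phi(t,\omega,x):=y_t$. Evaluating \eqref{eq:mildSolution} at $t=0$ immediately gives $\phi(0,\omega,x)=S_0x=x$, so property i) holds, and there remain the measurability of $\phi$, the cocycle property, and the continuity of $\phi(t,\omega,\cdot)$.

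For the continuity in the initial datum (property iii)) I would revisit the contraction estimate underlying \cite[Theorem 3.8]{HN22}. For $x,\bar x\in E_\alpha$ with corresponding solutions $y,\bar y$ and a step length $h=h(\omega)>0$ chosen small enough, subtracting the two fixed-point identities on $[0,h]$ and applying \eqref{estimate:integral} together with \eqref{ineq:Nonlin} gives $\norm{y-\bar y,G(y)-G(\bar y)}_{\cD^{2\gamma}_{X(\omega),\alpha}([0,h])}\leq L_h(\omega)\norm{x-\bar x}_\alpha$ with $L_h(\omega)$ finite and depending only on $\rho_{\gamma,[0,h]}(\X(\omega))$ and the structural constants $C_F,C_G$. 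Iterating this bound over a partition of $[0,t]$ of mesh $\leq h$, with each step's initial datum the previous step's endpoint, yields a locally Lipschitz, hence continuous, map $x\mapsto\phi(t,\omega,x)$, uniformly for $t$ in compact sets; since $t\mapsto y_t$ is continuous in $E_\alpha$, $\phi(\cdot,\omega,\cdot)$ is jointly continuous in $(t,x)$. For the measurability (recall $\phi$ must be $(\mathcal{B}(\R^+)\otimes\mathcal{F}\otimes\mathcal{B}(E_\alpha),\mathcal{B}(E_\alpha))$-measurable) one observes that $\omega\mapsto\X(\omega)$ is measurable into the (Polish) space of $\gamma$-H\"older rough paths equipped with $d_{\gamma,[0,T]}$, and that the same stability estimates make the solution map $(\X,x)\mapsto(y,G(y))$ continuous; hence $\omega\mapsto\phi(t,\omega,x)$ is measurable for each fixed $(t,x)$, and together with the joint continuity in $(t,x)$ a Carath\'eodory-type argument delivers the required joint measurability of $\phi$.

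The delicate step is the cocycle property ii), and I expect it to be the main obstacle. Fix $\omega\in\Omega$, $s,t\geq0$, $x\in E_\alpha$, and let $y=\phi(\cdot,\omega,x)$ solve \eqref{eq:mildSolution} on $[0,s+t]$. Applying the bounded linear operator $S_t$ to the mild formula for $y_s$, using the semigroup law $S_tS_{s-r}=S_{s+t-r}$, and using that $S_t$ commutes with the limit of the defining Riemann-type sums of the rough convolution in \eqref{Gintegral}, one gets
\begin{align*}
	S_ty_s=S_{s+t}x+\int_0^s S_{s+t-r}F(y_r)\,\txtd r+\int_0^s S_{s+t-r}G(y_r)\,\txtd\X_r(\omega).
\end{align*}
Subtracting this from the mild formula for $y_{s+\tau}$, $\tau\in[0,t]$, using additivity of the rough convolution across the point $s$, and substituting $r=s+v$, one obtains
\begin{align*}
	y_{s+\tau}=S_\tau y_s+\int_0^\tau S_{\tau-v}F(y_{s+v})\,\txtd v+\int_s^{s+\tau} S_{s+\tau-r}G(y_r)\,\txtd\X_r(\omega).
\end{align*}
The remaining task is to identify the last integral with $\int_0^\tau S_{\tau-v}G(y_{s+v})\,\txtd\X_v(\theta_s\omega)$. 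The pair $(v\mapsto y_{s+v},\,v\mapsto G(y_{s+v}))$ is a controlled rough path over $X(\theta_s\omega)$, with Gubinelli derivative $G(y_{s+\cdot})$ and the same spatial and temporal regularity, because by the rough-path cocycle identities $X_{s+a,s+b}(\omega)=X_{a,b}(\theta_s\omega)$ and $\xx_{s+a,s+b}(\omega)=\xx_{a,b}(\theta_s\omega)$ its remainder equals $R^{y}_{s+a,s+b}$; feeding these identities termwise into the sums in \eqref{Gintegral} over subintervals of $[s,s+\tau]$ turns them into the defining sums for $\int_0^\tau S_{\tau-v}G(y_{s+v})\,\txtd\X_v(\theta_s\omega)$, and passing to the limit proves the claim. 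Hence $v\mapsto y_{s+v}$ satisfies \eqref{eq:mildSolution} for \eqref{eq:MainEq} driven by $\X(\theta_s\omega)$ with initial value $y_s=\phi(s,\omega,x)$, and the uniqueness statement of \cite[Theorem 3.8]{HN22} yields $y_{s+t}=\phi(t,\theta_s\omega,\phi(s,\omega,x))$, which is ii). The only genuinely delicate points are the commutation of $S_t$ with the convolution-type rough integral and the termwise transfer of the rough-path cocycle identities through the sewing/limit procedure; everything else is the pathwise well-posedness and stability machinery of \cite{GHN2021,HN22}.
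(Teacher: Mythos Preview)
The paper does not actually prove this statement; it is quoted as \cite[Theorem 3.12]{HN22} and left without proof here, so there is no in-paper argument to compare against. Your sketch is a correct outline of the standard argument (essentially the one carried out in \cite{HN22,HN20}, with the finite-dimensional prototype in \cite{BRiedelScheutzow}): pathwise well-posedness defines $\phi$; continuity in the initial datum and joint measurability follow from the rough-path stability estimates combined with a Carath\'eodory-type argument; and the cocycle property is obtained by shifting the mild formula using the identities $X_{s+a,s+b}(\omega)=X_{a,b}(\theta_s\omega)$ and $\xx_{s+a,s+b}(\omega)=\xx_{a,b}(\theta_s\omega)$ inside the sewing sums and then invoking uniqueness.
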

	
	\begin{definition}\label{def:Tempered}
		\begin{itemize}
			\item[i)] A family of non-empty closed sets $B:=\{B(\omega)\}_{\omega\in \Omega}$ in $E$ is called a random set if 
			\begin{align*}
				\omega\mapsto \inf_{y\in B(\omega)} \norm{x-y}_E,
			\end{align*}
			is a random variable for all $x\in E$. It is called further a bounded, or compact, random set if the sets $B(\omega)$ are all bounded, respectively compact, for all $\omega\in \Omega$.
			\item[ii)] A random variable $Y:\Omega\to [0,\infty)$ is called tempered with respect to $(\theta_t)_{t\in \R}$ if 
			\begin{align*}
				\lim_{t\to \pm \infty} e^{-\beta \abs{t}} Y(\theta_t \omega)=0,
			\end{align*}
			for all $\beta>0$ and $\omega\in \Omega$. If $B:=\{B(\omega)\}_{\omega\in \Omega}$ is a bounded random set and $\omega \mapsto \sup\limits_{x\in B(\theta_{-t} \omega)}\norm{x}_E$ is tempered, then $B$ is called a tempered set.
		\end{itemize}
	\end{definition}
	Note that the temperedness defined in Definition \ref{def:Tempered} is equivalent to a subexponential growth condition, i.e.
	\begin{align}\label{eq:EquivTempered}
		\lim_{t\to \pm \infty} \frac{\log^+(Y(\theta_t \omega))}{\abs{t}}=0.
	\end{align}
	In the following we denote by $\mathscr{D}$ the universe of tempered sets in $E_\alpha$.
	\begin{definition}\label{def:attractor}
		A random set $\cA:=\{\cA(\omega)\}_{\omega\in \Omega}\in \mathscr{D}$ is called a random pullback $\mathscr{D}$-attractor for the random dynamical system $\phi$ if
		\begin{itemize}
			\item[i)] $\cA(\omega)$ is compact for every $\omega\in \Omega$,
			\item[ii)] $\cA$ is $\phi$-invariant, that means for every $t\geq 0$ and $\omega \in \Omega$ we have
			\begin{align*}
				\phi(t,\omega,\cA(\omega))=\cA(\theta_t\omega),
			\end{align*}
			\item[iii)] $\cA$ pullback attracts every tempered random set $B=\{B(\omega)\}_{\omega\in \Omega}$, that means 
			\begin{align*}
				\lim\limits_{t\to \infty} d(\phi(t,\theta_{-t}\omega,B(\theta_{-t}\omega)),\cA(\omega))=0,
			\end{align*}
			where $d(A_1,A_2):=\sup\limits_{x\in A_1} \inf\limits_{y\in A_2} \norm{x-y}_E$ for $A_1,A_2\subset E$ is the Hausdorff semimetric. 
		\end{itemize}
	\end{definition}
	Since it may be difficult to verify the attracting property, there exists a sufficient condition which ensures this, provided that there exists an absorbing set for the random dynamical system as specified below.  
	\begin{definition}\label{def:absorbing}
		A random set $B=\{B(\omega)\}_{\omega\in \Omega}\in \mathscr{D}$ is called random pullback $\mathscr{D}$-absorbing if for every $D=\{D(\omega)\}_{\omega\in \Omega}\in \mathscr{D}$ and $\omega\in \Omega$ there exists an absorbing time $T_D(\omega)>0$ such that
		\begin{align*}
			\phi(t,\theta_{-t} \omega, D(\theta_{-t}\omega))\subset B(\omega),
		\end{align*}
		for all $t\geq T_D(\omega)$.
	\end{definition}
	A suitable way to prove the existence of such an absorbing set, is to show the existence of a positive tempered random variable $R$, such that for any $y_0(\theta_{-t}\omega)\in D(\theta_{-t} \omega)$ with $D(\omega)\in \mathscr{D}$ and $\omega\in \Omega$ the estimate
	\begin{align*}
		\limsup\limits_{t\to \infty} \norm{\phi(t,\theta_{-t} \omega,y_0(\theta_{-t}\omega))}_\alpha\leq R(\omega)
	\end{align*}
	holds. Then the open ball $B(\omega):=B(0,R(\omega)+\overline{\delta})$, for some constant $\overline{\delta}>0$, is a random pullback $\mathscr{D}$-absorbing set.
	\begin{theorem}{\em (\cite[Theorem 3.5]{FlandoliSchmalfuss96}).}\label{thm:ExAttractor}
		Assume the existence of a compact set $B:=\{B(\omega)\}_{\omega\in \Omega}\in \mathscr{D}$ which is random pullback $\mathscr{D}$-absorbing. Then the continuous random dynamical system $\phi$ has an unique random pullback $\mathscr{D}$-attractor $\cA:=\{\cA(\omega)\}_{\omega\in \Omega}$ given by
		\begin{align*}
			\cA(\omega):=\bigcap_{s\geq 0}\overline{\bigcup_{t\geq s} \phi(t,\theta_{-t} \omega,B(\theta_{-t}\omega))}.
		\end{align*}
	\end{theorem}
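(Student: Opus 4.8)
This is the classical random attractor existence theorem, so the plan is to run the standard $\omega$-limit set argument going back to Crauel--Flandoli and Schmalfuss. The first step is to record the pullback $\omega$-limit characterization of the set $\cA(\omega)$ from the statement: $z\in\cA(\omega)$ if and only if there are $t_n\to\infty$ and $x_n\in B(\theta_{-t_n}\omega)$ with $\phi(t_n,\theta_{-t_n}\omega,x_n)\to z$; here ``$\Leftarrow$'' is immediate from the definition and ``$\Rightarrow$'' follows by choosing, for each $n$, some $t_n\geq n$ and $x_n\in B(\theta_{-t_n}\omega)$ with $\norm{\phi(t_n,\theta_{-t_n}\omega,x_n)-z}_E<1/n$. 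Next, compactness: applying the absorbing property to $B$ itself gives $T_B(\omega)>0$ with $\phi(t,\theta_{-t}\omega,B(\theta_{-t}\omega))\subset B(\omega)$ for $t\geq T_B(\omega)$, so for $s\geq T_B(\omega)$ each set $\overline{\bigcup_{t\geq s}\phi(t,\theta_{-t}\omega,B(\theta_{-t}\omega))}$ is a closed subset of the compact set $B(\omega)$. Hence $\cA(\omega)$ is a decreasing intersection of nonempty compact sets, so it is nonempty and compact, and $\cA(\omega)\subset B(\omega)$; since $B\in\mathscr{D}$ is tempered, so is $\cA$, i.e.\ $\cA\in\mathscr{D}$.

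For the pullback attraction of an arbitrary $D\in\mathscr{D}$, I would argue by contradiction: if it failed, there would be $\varepsilon>0$, $t_n\to\infty$ and $x_n\in D(\theta_{-t_n}\omega)$ with $\mathrm{dist}(\phi(t_n,\theta_{-t_n}\omega,x_n),\cA(\omega))\geq\varepsilon$. For any fixed $s\geq0$, the cocycle property gives $\phi(t_n,\theta_{-t_n}\omega,x_n)=\phi(s,\theta_{-s}\omega,\phi(t_n-s,\theta_{-t_n}\omega,x_n))$, and the absorbing property of $B$ (with base points $\theta_{-s}\omega$, resp.\ $\omega$) forces $\phi(t_n-s,\theta_{-t_n}\omega,x_n)\in B(\theta_{-s}\omega)$ and $\phi(t_n,\theta_{-t_n}\omega,x_n)\in B(\omega)$ for $n$ large. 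Compactness of $B(\omega)$ yields a subsequential limit $z$; for every $s$ the tail of the subsequence lies in $\overline{\bigcup_{t\geq s}\phi(t,\theta_{-t}\omega,B(\theta_{-t}\omega))}$ (take $t=s$), so $z\in\cA(\omega)$, whence $\mathrm{dist}(\phi(t_{n_k},\theta_{-t_{n_k}}\omega,x_{n_k}),\cA(\omega))\to\mathrm{dist}(z,\cA(\omega))=0$, contradicting $\geq\varepsilon$.

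Invariance $\phi(t,\omega,\cA(\omega))=\cA(\theta_t\omega)$ then follows from the characterization together with continuity of $\phi(t,\omega,\cdot)$. For ``$\subset$'': if $z=\lim_n\phi(t_n,\theta_{-t_n}\omega,x_n)\in\cA(\omega)$, then $\phi(t,\omega,z)=\lim_n\phi(t,\omega,\phi(t_n,\theta_{-t_n}\omega,x_n))=\lim_n\phi(t+t_n,\theta_{-(t+t_n)}(\theta_t\omega),x_n)$ with $x_n\in B(\theta_{-(t+t_n)}(\theta_t\omega))$, so $\phi(t,\omega,z)\in\cA(\theta_t\omega)$. For ``$\supset$'': if $z=\lim_n\phi(s_n,\theta_{-s_n}(\theta_t\omega),x_n)\in\cA(\theta_t\omega)$ with $s_n>t$, put $w_n:=\phi(s_n-t,\theta_{-(s_n-t)}\omega,x_n)$; the absorbing property gives $w_n\in B(\omega)$ eventually, hence a subsequence $w_{n_k}\to w$, and the characterization gives $w\in\cA(\omega)$, while continuity gives $z=\phi(t,\omega,w)\in\phi(t,\omega,\cA(\omega))$. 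Uniqueness is the usual argument: if $\widetilde\cA$ is another such attractor, then by its invariance and the attraction of $\widetilde\cA$ by $\cA$ one gets $d(\widetilde\cA(\omega),\cA(\omega))=d(\phi(t,\theta_{-t}\omega,\widetilde\cA(\theta_{-t}\omega)),\cA(\omega))\to0$, which being $t$-independent is $0$, so $\widetilde\cA(\omega)\subset\cA(\omega)$, and symmetrically the reverse inclusion holds.

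The one genuinely delicate point, which I expect to be the main obstacle to a fully rigorous write-up, is measurability: that $\omega\mapsto\mathrm{dist}(x,\cA(\omega))$ is a random variable for each $x\in E$. I would obtain it from a Castaing representation $B(\omega)=\overline{\{b_k(\omega):k\in\N\}}$ of the random closed set $B$; using continuity of $\phi$ in its last argument and in time, the set $\bigcup_{t\geq s}\phi(t,\theta_{-t}\omega,B(\theta_{-t}\omega))$ has the same closure as the countable family $\{\phi(q,\theta_{-q}\omega,b_k(\theta_{-q}\omega)):q\in\mathbb{Q}\cap[s,\infty),\,k\in\N\}$, each member of which is $\omega$-measurable by measurability of $\phi$. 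Writing $\cA$ as a countable intersection over $s\in\N$ of closures of such countable unions and invoking separability of $E$ then shows $\cA$ is a (compact) random set. Since this is well known and the bookkeeping for measurability is standard, in the paper it suffices to cite \cite{FlandoliSchmalfuss96,CrauelFlandoli} for the full details.
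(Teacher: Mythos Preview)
The paper does not prove this theorem at all: it is quoted verbatim from \cite[Theorem 3.5]{FlandoliSchmalfuss96} and used as a black box, with no argument supplied. Your proposal is the standard Crauel--Flandoli/Schmalfuss proof via pullback $\omega$-limit sets, and it is correct in all essential points (characterization, compactness from the compact absorbing set, attraction by contradiction, invariance via the cocycle property and continuity, uniqueness, and the measurability sketch via a Castaing representation). This is precisely the argument one finds in the cited references, so in the context of this paper your final remark---that it suffices to cite \cite{FlandoliSchmalfuss96,CrauelFlandoli}---is exactly what the authors do.
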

	\subsection{Gronwall lemmata and the Mittag-Leffler function}
	Due to Assumption \ref{ass:Nonlin} 1) we need the following version of Gronwall's inequality in order to derive an a-priori estimate of the solution of~\eqref{eq:MainEq} in $E_\alpha$.
	\begin{lemma}{\em (Singular Henry-Gronwall inequality~\cite[Lemma 7.1.1]{Henry81}).}
		Let $v,h\in L^\infty_{{\rm loc}}([0,T);[0,\infty))$ be non-negative functions satisfying
		\begin{align*}
			v(t)\leq h(t)+M\int_0^t (t-r)^{\beta-1}v(r)~\txtd r,
		\end{align*}
		for $t\in (0,T),T\in (0,\infty]$ and $M,\beta>0$. Then one has
		\begin{align}\label{ineq:singularGronwall2}
			v(t)\leq h(t)+(\Gamma(\beta)M)^{\beta^{-1}}\int_0^t h(r) E^\prime_{\beta,1}((t-r)(\Gamma(\beta)M)^{\beta^{-1}})~\txtd r,
		\end{align}
		where, $\Gamma(z):=\int_0^\infty e^{-r}r^{z-1} ~\txtd r$ is the Gamma function and  $E_{\beta,c}(z):=\sum_{k=0}^\infty \frac{z^{\beta k}}{\Gamma(k\beta+c)}$  the Mittag-Leffler function. 
	\end{lemma}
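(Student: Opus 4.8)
The plan is to combine a Picard-type iteration of the integral inequality with an explicit evaluation of the iterated kernel. Introduce the operator
\[
(\Lambda\phi)(t):=M\int_0^t (t-r)^{\beta-1}\phi(r)\,\txtd r,
\]
which is well defined on $L^\infty_{{\rm loc}}([0,T);[0,\infty))$ since the kernel $(t-r)^{\beta-1}$ is locally integrable; it preserves non-negativity and is monotone, meaning that $\phi\le\psi$ pointwise implies $\Lambda\phi\le\Lambda\psi$. The hypothesis reads $v\le h+\Lambda v$, so iterating it and using monotonicity gives, for every $N\ge1$,
\[
v\le \sum_{n=0}^{N-1}\Lambda^n h+\Lambda^N v,\qquad\text{with }\Lambda^0 h:=h.
\]

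The first substantive step is to prove, by induction on $n\ge1$, the closed form
\[
(\Lambda^n h)(t)=\frac{(\Gamma(\beta)M)^n}{\Gamma(n\beta)}\int_0^t (t-r)^{n\beta-1}h(r)\,\txtd r,
\]
that is, that $\Lambda$ is, up to the constant $\Gamma(\beta)M$, the Riemann--Liouville fractional integral of order $\beta$ and therefore enjoys the semigroup property. In the inductive step one writes $\Lambda^{n+1}h=\Lambda(\Lambda^n h)$, exchanges the two integrations (Tonelli, all integrands being non-negative), and evaluates the inner integral by the substitution $s=r+(t-r)u$ together with the Beta integral, namely
\[
\int_r^t (t-s)^{\beta-1}(s-r)^{n\beta-1}\,\txtd s=(t-r)^{(n+1)\beta-1}\,\frac{\Gamma(\beta)\Gamma(n\beta)}{\Gamma((n+1)\beta)}.
\]

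Next I would control the remainder. Since $v$ is bounded on $[0,t]$, the closed form above with $h$ replaced by $v$ yields $(\Lambda^N v)(t)\le \norm{v}_{L^\infty([0,t])}\,(\Gamma(\beta)M)^N t^{N\beta}/\Gamma(N\beta+1)$, and the right-hand side is exactly the $N$-th term of the everywhere-convergent series $E_{\beta,1}((\Gamma(\beta)M)^{1/\beta}t)$; hence $(\Lambda^N v)(t)\to0$ as $N\to\infty$, owing to the super-exponential growth of $\Gamma(N\beta+1)$. Letting $N\to\infty$ in the iterated inequality, the partial sums $\sum_{n=0}^{N-1}(\Lambda^n h)(t)$ increase to $\sum_{n=0}^\infty(\Lambda^n h)(t)$, so we arrive at $v(t)\le h(t)+\sum_{n=1}^\infty(\Lambda^n h)(t)$ for every $t\in(0,T)$.

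It remains to identify the series. Writing $c:=(\Gamma(\beta)M)^{1/\beta}$ and interchanging summation and integration once more by Tonelli,
\[
\sum_{n=1}^\infty(\Lambda^n h)(t)=\int_0^t h(r)\sum_{n=1}^\infty\frac{c^{n\beta}(t-r)^{n\beta-1}}{\Gamma(n\beta)}\,\txtd r.
\]
On the other hand, differentiating $E_{\beta,1}(z)=\sum_{k\ge0}z^{\beta k}/\Gamma(k\beta+1)$ term by term and using $\Gamma(k\beta+1)=k\beta\,\Gamma(k\beta)$ gives $E^\prime_{\beta,1}(z)=\sum_{k\ge1}z^{k\beta-1}/\Gamma(k\beta)$; hence the inner series equals $c\,E^\prime_{\beta,1}(c(t-r))$, and substituting this back reproduces precisely the asserted bound \eqref{ineq:singularGronwall2}. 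There is no deep obstacle in this argument: the only points needing care are the justification of the two Tonelli interchanges — handled by non-negativity together with the local boundedness of $h$ and $v$ — and the vanishing of the remainder $\Lambda^N v$, which is exactly where the super-exponential growth of the Gamma function (equivalently, the fact that the Mittag-Leffler function is entire) is used; by comparison, the identification of $E^\prime_{\beta,1}$ is a straightforward power-series manipulation.
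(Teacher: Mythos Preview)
Your argument is correct and is essentially the classical proof from Henry's book: iterate the integral inequality, evaluate $\Lambda^n$ explicitly via the Beta identity, kill the remainder using the growth of $\Gamma(N\beta+1)$, and identify the resulting series with $E'_{\beta,1}$. Note that the paper does not supply its own proof of this lemma at all --- it is simply quoted from \cite[Lemma~7.1.1]{Henry81} --- so there is nothing to compare against beyond observing that your derivation reproduces the standard one.
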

	This fact is a generalization of the classical Gronwall lemma, which is a special case of the previous statement for $\beta=1$ since $E_{1,1}(z)=e^z$. In order to prove now the existence of a random absorbing set later on, we need to investigate the long-time behavior of the solution. This incorporates also the derivative of the Mittag-Leffler function we obtain here due to the singular Gronwall-Henry inequality. But as a generalization of the exponential function, its asymptotic behavior is similar. This is crucial for the computation of the absorbing set in the next section.  
	\begin{lemma}
		For $\beta\in (0,1)$, there exist a constant $M_\beta>0$, such that for $z>1$ large enough we have
		\begin{align}\label{eq:MittagLefflerBound}
			E^\prime_{\beta,1}(z)\leq M_\beta e^{2z}.
		\end{align}
	\end{lemma}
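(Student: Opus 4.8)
The plan is to bound $E'_{\beta,1}$ term by term and reduce the estimate to the power series of the exponential function, exploiting that $E_{\beta,1}$ is tailored to generalise $\exp$. First I would differentiate the defining series: for every $k\ge 1$ the map $z\mapsto z^{k\beta}$ is $C^1$ on $(0,\infty)$ and the series of derivatives converges locally uniformly there, so term-by-term differentiation of $E_{\beta,1}(z)=\sum_{k\ge 0} z^{k\beta}/\Gamma(k\beta+1)$ is legitimate; combined with $\Gamma(k\beta+1)=k\beta\,\Gamma(k\beta)$ this gives
\begin{align*}
E'_{\beta,1}(z)=\sum_{k=1}^{\infty}\frac{z^{k\beta-1}}{\Gamma(k\beta)},\qquad z>0.
\end{align*}

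Next I would estimate this series for $z\ge 1$, splitting off the finitely many indices with $k\beta<2$. For those, $\Gamma(k\beta)\ge c_0:=\min_{x>0}\Gamma(x)>0$ and $z^{k\beta-1}\le z$ (since $k\beta-1<1$ and $z\ge 1$), so their total contribution is at most $C_\beta'\,z$ with $C_\beta':=c_0^{-1}\,\#\{k:k\beta<2\}$. For the indices with $k\beta\ge 2$ one has $\lfloor k\beta\rfloor\ge 2$, hence $\Gamma(k\beta)\ge\Gamma(\lfloor k\beta\rfloor)=(\lfloor k\beta\rfloor-1)!$ because $\Gamma$ is increasing on $[2,\infty)$; moreover $z^{k\beta-1}\le z\cdot z^{\lfloor k\beta\rfloor-1}$ since $k\beta-1<\lfloor k\beta\rfloor$ and $z\ge 1$. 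Grouping the indices by the value $m:=\lfloor k\beta\rfloor$ and using that at most $\beta^{-1}+1=:C_\beta$ integers $k$ satisfy $\lfloor k\beta\rfloor=m$, I obtain
\begin{align*}
\sum_{k:\,k\beta\ge 2}\frac{z^{k\beta-1}}{\Gamma(k\beta)}\le C_\beta\,z\sum_{m\ge 2}\frac{z^{m-1}}{(m-1)!}\le C_\beta\,z\,e^{z}.
\end{align*}
Adding the two contributions yields $E'_{\beta,1}(z)\le(C_\beta+C_\beta')\,z\,e^{z}$ for all $z\ge 1$, and since $z\le e^{z}$ this gives \eqref{eq:MittagLefflerBound} with $M_\beta:=C_\beta+C_\beta'$ --- in fact for every $z\ge 1$, not merely for $z$ large.

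I do not expect a genuine obstacle here: the requested rate $e^{2z}$ is far coarser than the true one $E'_{\beta,1}(z)\sim\beta^{-1}e^{z}$, so the only thing to be careful about is the elementary bookkeeping with the floor function and the count $C_\beta$ (both degenerating as $\beta\downarrow 0$, consistent with $M_\beta\to\infty$). If one prefers to bypass the hands-on computation, a shift of the summation index gives $E'_{\beta,1}(z)=z^{\beta-1}E_{\beta,\beta}(z)$, and invoking the classical asymptotics of the Mittag--Leffler function (see \cite{Henry81,SellYou}), namely $E_{\beta,\beta}(z)\sim\beta^{-1}z^{1-\beta}e^{z}$ as $z\to\infty$, one obtains $E'_{\beta,1}(z)\sim\beta^{-1}e^{z}$ and hence \eqref{eq:MittagLefflerBound} directly.
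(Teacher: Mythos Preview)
Your proof is correct, and in fact more self-contained than the paper's. The paper proceeds by first establishing the identity $E'_{\beta,1}(z)=z^{\beta-1}E_{\beta,\beta}(z)$ (which you mention at the end as an alternative) and then invoking a cited asymptotic bound $E_{\beta,\beta}(z)\le P(z)+Q(z)e^{z}$ from \cite{SellYou}, absorbing the polynomials into a second factor $e^{z}$ for $z$ large. Your main argument instead bounds the differentiated series directly by comparing $\Gamma(k\beta)$ with $(\lfloor k\beta\rfloor-1)!$ and grouping terms, which avoids any external reference and yields the estimate for every $z\ge 1$ rather than only for $z$ sufficiently large. The trade-off is that the paper's route is shorter once one accepts the quoted asymptotics, while yours is elementary and gives an explicit handle on $M_\beta$ in terms of $\beta^{-1}$; both are entirely adequate for the coarse rate $e^{2z}$ needed downstream.
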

	\begin{proof}
		Due to \cite[(94.19)]{SellYou} there exist two polynomials $P$ and $Q$ such that 
		\begin{align*}
			E_{\beta,\beta}(z)\leq P(z)+Q(z)e^z,
		\end{align*}
		for $z>1$. Since we can bound every polynomial by $e^z$, if $z$ is large enough, this leads to
		\begin{align*}
			E_{\beta,\beta}(z)\leq M_\beta e^{2 z},
		\end{align*}
		where $M_\beta$ depends on the coefficients of $P$ and $Q$. 
		This further means that
		\[ \lim\limits_{t\to\infty}\frac{1}{t} \log E_{\beta,\beta}(\mu t)=\mu,  \]
		for all $\mu,\beta>0$. This is crucial for our aims since we are interested in the long-time behavior of~\eqref{eq:MainEq}.
		A similar statement holds also for the derivative of the Mittag-Leffler's function.  
		In order to obtain this, note that 
		\begin{align*}
			E^\prime_{\beta,1}(z)&=\sum_{k=1}^\infty \frac{k \beta}{\Gamma(k\beta+1)}z^{k\beta-1}=\sum_{k=1}^\infty \frac{z^{k\beta-1}}{\Gamma(k\beta)}
			=\frac{1}{z} \sum_{k=1}^\infty \frac{z^{(k-1)\beta+\beta}}{\Gamma((k-1)\beta+\beta)}\\
			&=\frac{z^\beta}{z}\sum_{k=0}^\infty \frac{z^{k\beta}}{\Gamma(k\beta+\beta)}=z^{\beta-1}E_{\beta,\beta}(z).
		\end{align*}
		Therefore, the asymptotic behavior of $E^\prime_{\beta,1}$ is the same as the one of $E_{\beta,\beta}$, where the constant $M_\beta$ can be varied.
		\qed\\
	\end{proof}
	\begin{lemma}{\em (discrete Gronwall inequality~\cite[Lemma 3.12]{DucHong23}).}\label{lem:discreteGronwall}
		Let $(u_n)_{n\in \N}$, $(b_{k})_{k\in \N_0}$ and $(c_{k})_{\in \N_0}$  be a non-negative sequences and $a\geq 0$, satisfying
		\begin{align*}
			u_n\leq a+\sum_{k=0}^{n-1} b_{k} u_k +\sum_{k=0}^{n-1} c_{k},
		\end{align*}
		for all $n\in \N$. Then we have 
		\begin{align}\label{ineq:discreteGronwall}
			u_n\leq \max\{a,u_0\}\prod_{j=0}^{n-1} (1+b_{j})+\sum_{k=0}^{n-1}c_{k} \prod_{j=k+1}^{n-1} (1+b_{j}),
		\end{align}
		for all $n\in \N$. 
	\end{lemma}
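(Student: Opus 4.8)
The plan is to reduce the inequality to a one-step linear recursion for the right-hand side and then solve that recursion explicitly. Concretely, for $n\geq 1$ set
\[
S_n := a + \sum_{k=0}^{n-1} b_k u_k + \sum_{k=0}^{n-1} c_k ,
\]
and put $S_0 := \max\{a,u_0\}$, so that $u_n \leq S_n$ for every $n\geq 1$ by hypothesis and $u_0 \leq S_0$ trivially.

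First I would verify the one-step estimate $S_{n+1}\leq (1+b_n)S_n + c_n$ for all $n\geq 0$. For $n\geq 1$ this is immediate from $S_{n+1}-S_n = b_n u_n + c_n \leq b_n S_n + c_n$, using $u_n\leq S_n$. For $n=0$ one writes $S_1 = a + b_0 u_0 + c_0 \leq a + b_0 S_0 + c_0 \leq (1+b_0)S_0 + c_0$, the last step because $a\leq S_0$. This initial step, namely absorbing both $a$ and $u_0$ into the single quantity $\max\{a,u_0\}$, is the only mildly delicate point; the rest is bookkeeping.

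Then I would iterate the recursion: a straightforward induction on $n$ (multiplying the bound for $S_n$ by $(1+b_n)$ and adding $c_n$) gives
\[
S_n \leq \Big(\prod_{j=0}^{n-1}(1+b_j)\Big) S_0 + \sum_{k=0}^{n-1} c_k \prod_{j=k+1}^{n-1}(1+b_j),
\]
with the usual convention that empty products equal $1$. Since $u_n\leq S_n$ and $S_0 = \max\{a,u_0\}$, this is precisely \eqref{ineq:discreteGronwall}. I do not expect any genuine obstacle here: the statement is the discrete analogue of the integrating-factor solution of a linear scalar ODE, and the argument is entirely elementary. Alternatively, one could run a strong induction directly on $u_n$, inserting the claimed bound for each $u_k$ with $k<n$ into the hypothesis and resumming, but the partial-sum reformulation keeps the algebra cleanest.
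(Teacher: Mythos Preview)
Your argument is correct: defining $S_0=\max\{a,u_0\}$ and $S_n=a+\sum_{k=0}^{n-1}b_ku_k+\sum_{k=0}^{n-1}c_k$ for $n\geq 1$, you get $u_n\leq S_n$ for all $n\geq 0$ and the one-step recursion $S_{n+1}\leq (1+b_n)S_n+c_n$, whose iteration yields exactly \eqref{ineq:discreteGronwall}. The only place requiring a moment's care is the step $n=0$, and you handle it properly by using both $a\leq S_0$ and $u_0\leq S_0$.

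As for comparison: the paper does not supply its own proof of this lemma. It is stated with a citation to \cite[Lemma~3.12]{DucHong23} and left unproved in the text, so there is nothing to compare against beyond noting that your approach is the standard one for discrete Gronwall inequalities.
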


	\section{Random attractor}\label{sec:main}
	Before we show the existence of a random attractor, let us first collect some integral estimates needed later. Recall, that $\X(\omega)=(X(\omega),\XX(\omega))$ is a geometric rough path cocycle satisfying Assumption \ref{ass:Noise}. From now on, let $(y,y^\prime)\in \cD^{2\gamma}_{X(\omega),\alpha}$ be the unique global solution of \eqref{eq:MainEq} driven by $\X(\omega)$.
	\begin{lemma}\label{lem:IntegralEstimates} Let $t\geq 0$. Then we have 
		\begin{align*}
			\norm{\int_0^t S_{t-r}G(y_r)~\txtd \X_r(\omega)}_\alpha&\leq C_GC_1 \sum_{l=0}^{\lfloor t \rfloor}e^{-\lambda_A(t-l-1)}P_3(\omega,[l,l+1]),\\
			\norm{\int_0^t S_{t-r}F(y_r)~\txtd r}_\alpha&\leq C_{-\sigma_F} C_F\int_0^t e^{-\lambda_A(t-r)} (t-r)^{-\sigma_F}\norm{y_r}_{\alpha}~\txtd r+C_2,
		\end{align*}
		where we introduced the expressions
		\begin{align*}
			C_1&:=\max\{C_IC_A,C_I\}, \\ 
			C_2&:=C_{-\sigma_F}C_F \lambda_A^{\sigma_F-1} \Gamma(1-\sigma_F),\\
			P_3(\omega,[l,l+1])&:=\rho_{\gamma,[l,l+1]}(\X(\omega))^2(1+\norm{y,y^\prime}_{\cD^{2\gamma}_{X,\alpha}([l,l+1])}).
		\end{align*}
	\end{lemma}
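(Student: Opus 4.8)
The two estimates are obtained by splitting the time interval $[0,t]$ at the integers and exploiting the exponential decay of the semigroup together with the bounds from Theorem~\ref{integral} and Assumption~\ref{ass:Nonlin}. For the rough integral, the plan is first to use the additivity of the rough convolution over the partition $0 = l_0 < 1 < \dots < \lfloor t\rfloor < t$ (i.e. $\int_0^t S_{t-r}G(y_r)\,\txtd\X_r = \sum_{l=0}^{\lfloor t\rfloor} S_{t-(l\wedge\text{upper})}\int_{l}^{(l+1)\wedge t} S_{(l+1)\wedge t - r}G(y_r)\,\txtd\X_r$, using the semigroup property $S_{t-r} = S_{t-v}S_{v-r}$). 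On each subinterval $[l,(l+1)\wedge t]$ of length $\le 1$ I apply~\eqref{estimate:integral} with $\beta=0$ and $\alpha$ replaced appropriately, bounding $\norm{\int_l^{l+1} S_{l+1-r}G(y_r)\,\txtd\X_r}_{\alpha-2\gamma+\beta}$; but to land back in $E_\alpha$ I use~\eqref{estimate:integral} with a positive $\beta$ slightly below $2\gamma$ so that the local integral is controlled in $E_\alpha$ up to a loss $(l+1-r)^{3\gamma-\beta}\le 1$, at the cost of a constant $C_I$. Then the outer semigroup $S_{t-l-1}$ (or $S_{t - \text{endpoint}}$) contributes the factor $C_A e^{-\lambda_A(t-l-1)}$ via~\eqref{hg:1}. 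Finally I bound the controlled-rough-path norm of $G(y)$ on $[l,l+1]$ using Lemma~\cite[Lemma 3.6]{HN22}, namely~\eqref{ineq:Nonlin}, giving $C_G\rho_{\gamma,[l,l+1]}(\X)(1+\norm{y,G(y)}_{\cD^{2\gamma}_{X,\alpha}([l,l+1])})$; collecting the two $\rho$-factors (one from~\eqref{estimate:integral}, one from~\eqref{ineq:Nonlin}) yields $\rho_{\gamma,[l,l+1]}(\X)^2$, i.e. the quantity $P_3(\omega,[l,l+1])$, and setting $C_1 := \max\{C_I C_A, C_I\}$ absorbs the constants (the two cases in the max correspond to whether the outer semigroup acts or the endpoint coincides with $t$).

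For the deterministic integral, the approach is direct: apply the norm under the integral sign, $\norm{\int_0^t S_{t-r}F(y_r)\,\txtd r}_\alpha \le \int_0^t \norm{S_{t-r}F(y_r)}_\alpha\,\txtd r$, then use~\eqref{hg:1} with $\sigma = \sigma_F$ (so that $F(y_r)\in E_{\alpha-\sigma_F}$ is mapped into $E_\alpha$) to get the bound $\int_0^t C_{-\sigma_F}e^{-\lambda_A(t-r)}(t-r)^{-\sigma_F}\norm{F(y_r)}_{\alpha-\sigma_F}\,\txtd r$. Invoking the linear growth condition $\norm{F(y_r)}_{\alpha-\sigma_F}\le C_F(1+\norm{y_r}_\alpha)$ from Assumption~\ref{ass:Nonlin}~1), the term $C_F\norm{y_r}_\alpha$ produces exactly the first summand, while the constant term $C_F$ gives $C_{-\sigma_F}C_F\int_0^t e^{-\lambda_A(t-r)}(t-r)^{-\sigma_F}\,\txtd r \le C_{-\sigma_F}C_F\int_0^\infty e^{-\lambda_A u}u^{-\sigma_F}\,\txtd u = C_{-\sigma_F}C_F\,\lambda_A^{\sigma_F-1}\Gamma(1-\sigma_F) =: C_2$, using the substitution $u = \lambda_A(t-r)$ and the definition of the Gamma function (this converges since $\sigma_F < 1$).

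The main obstacle is the first estimate: one must be careful about the regularity bookkeeping when decomposing the rough integral over integer subintervals — specifically, ensuring that the local pieces $\int_l^{l+1}S_{l+1-r}G(y_r)\,\txtd\X_r$ are estimated in the correct interpolation space so that, after applying the outer semigroup, everything lands in $E_\alpha$ with only integrable (in $\omega$) constants and no loss of the exponential decay. The parameter $\beta$ in~\eqref{estimate:integral} must be chosen in $[0,3\gamma)$ (e.g. $\beta = 2\gamma$ if $2\gamma < 3\gamma$, which holds) so that $\alpha - 2\gamma + \beta = \alpha$, giving the local integral directly in $E_\alpha$; then the outer semigroup is not even needed on the last block, explaining the $\max$ in the definition of $C_1$. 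The remaining steps — summation of the geometric-type series in $e^{-\lambda_A(t-l-1)}$ and verification of convergence of the elementary integrals — are routine.
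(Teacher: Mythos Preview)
Your proposal is correct and follows essentially the same route as the paper: split the rough convolution at the integers, pull out $S_{t-(l+1)}$ via the semigroup property to extract the exponential decay $C_A e^{-\lambda_A(t-l-1)}$, apply~\eqref{estimate:integral} on each unit block, and then bound $\|G(y),(G(y))'\|_{\cD^{2\gamma}_{X,\alpha-\sigma_G}}$ via~\eqref{ineq:Nonlin}; the deterministic integral is handled exactly as you describe.

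There is one small bookkeeping slip in your regularity discussion. The controlled rough path $(G(y),(G(y))')$ lives in $\cD^{2\gamma}_{X,\alpha-\sigma_G}$, not in $\cD^{2\gamma}_{X,\alpha}$, so~\eqref{estimate:integral} must be applied with base level $\alpha-\sigma_G$. To land the local integral in $E_\alpha$ you therefore need $(\alpha-\sigma_G)-2\gamma+\beta=\alpha$, i.e.\ $\beta=2\gamma+\sigma_G$, which is indeed admissible since $\sigma_G<\gamma$ gives $\beta<3\gamma$. The resulting time factor is $(t-s)^{3\gamma-\beta}=(t-s)^{\gamma-\sigma_G}\le 1$ on each unit block (this is exactly the exponent $\gamma-\sigma_G$ that appears in the paper's treatment of the last block $[\lfloor t\rfloor,t]$). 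Your stated choice $\beta=2\gamma$ would only place the local integral in $E_{\alpha-\sigma_G}$ when $\sigma_G>0$; the fix is immediate, but worth getting right since you flagged this very step as the ``main obstacle''.
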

	\begin{proof}
		Similar to \cite{DucHong23,KN23} we split the rough integral into integrals over an interval with length one. Regarding Assumption \ref{ass} 2), \eqref{estimate:integral} and \eqref{ineq:Nonlin} we get 
		\begin{align*}
			\norm{\int_0^t S_{t-r}G(y_r)~\txtd \X_r(\omega)}_\alpha &\leq  \sum_{l=0}^{\lfloor t \rfloor-1}\norm{\int_l^{l+1} S_{t-r}G(y_r)~\txtd \X_r(\omega)}_\alpha+\norm{\int_{\lfloor t \rfloor}^t S_{t-r}G(y_r)~\txtd \X_r(\omega)}_\alpha\\
			&\leq C_A \sum_{l=0}^{\lfloor t \rfloor-1}e^{-\lambda_A(t-l-1)}\norm{\int_l^{l+1} S_{l+1-r}G(y_r)~\txtd \X_r(\omega)}_\alpha\\
			&+\norm{\int_{\lfloor t \rfloor}^t S_{t-r}G(y_r)~\txtd \X_r(\omega)}_\alpha\\
			&\leq C_I C_A \sum_{l=0}^{\lfloor t \rfloor-1}e^{-\lambda_A(t-l-1)}\rho_{\gamma,[l,l+1]}(\X(\omega))\norm{G(y),(G(y))^\prime}_{\cD^{2\gamma}_{X(\omega),\alpha-\sigma_G}([l,l+1])}\\
			&+C_I\rho_{\gamma,[\lfloor t \rfloor,t]}(\X(\omega))\norm{G(y),(G(y))^\prime}_{\cD^{2\gamma}_{X(\omega),\alpha-\sigma_G}([\lfloor t \rfloor,t])}(t-\lfloor t \rfloor)^{\gamma-\sigma_G}\\
			&\leq C_GC_1 \sum_{l=0}^{\lfloor t \rfloor}e^{-\lambda_A(t-l-1)}\rho_{\gamma,[l,l+1]}(\X(\omega))^2(1+\norm{y,G(y)}_{\cD^{2\gamma}_{X(\omega),\alpha}([l,l+1])}),
		\end{align*}
		which shows the first inequality. The second integral can be estimated regarding the Lipschitz continuity of $F:E_\alpha\to E_{\alpha-\sigma_F}$
		\begin{align*}
			\norm{\int_0^t S_{t-r}F(y_r)~\txtd r}_\alpha&\leq C_{-\sigma_F}C_F\int_0^t e^{-\lambda_A(t-r)} (t-r)^{-\sigma_F}(1+\norm{y_r}_{\alpha})~\txtd r \\
			& \leq C_{-\sigma_F} C_F\int_0^t e^{-\lambda_A(t-r)} (t-r)^{-\sigma_F}\norm{y_r}_{\alpha}~\txtd r\\ &+C_{-\sigma_F} C_F\int_0^t e^{-\lambda_A(t-r)} (t-r)^{-\sigma_F}~\txtd r,
		\end{align*}
		where the last term can be bounded by $\lambda_A^{\sigma_F-1} \Gamma(1-\sigma_F)$, which can be seen by a simple substitution and the definition of the Gamma function.
		\qed\\
	\end{proof}
	\begin{lemma}\label{lem:SolEst}
		Let $n\in \N_0$ and define 
		\begin{align*}
			\widetilde{C}_1=C_1e^{\lambda_A}\max\left\{\widetilde{L}, \frac{M_{1-\sigma_F}}{2}\right\},\quad
			\widetilde{C}_2=C_2\left( \widetilde{L}+\frac{LM_{1-\sigma_F}}{2\lambda}\right), \quad
			\widetilde{C}_A=C_A\max\left\{\widetilde{L}, \frac{M_{1-\sigma_F}}{2}\right\}, 
		\end{align*}
		where $L:=2(C_{-\sigma_F}C_F\Gamma(1-\sigma_F))^{\frac{1}{1-\sigma_F}}$, $\widetilde{L}:=\frac{2E_{1-\sigma_F,1}\left(t_0\frac{L}{2}\right)}{L}$+1, $\lambda:=\lambda_A-L$ and $t_0>0$ such that $t_0\frac{L}{2}$ is large enough for \eqref{eq:MittagLefflerBound} to hold. Then we get for $t\in [n,n+1]$ the estimate 
		\begin{align}\label{ineq:AprioriEst}
			\norm{y_t}_\alpha e^{\lambda t}\leq \widetilde{C}_A \norm{y_0}_\alpha+\widetilde{C}_2e^{\lambda t}+\widetilde{C}_1C_G \sum_{l=0}^{n}e^{\lambda l}P_3(\omega,[l,l+1]).
		\end{align}
	\end{lemma}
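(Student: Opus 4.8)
\medskip
\noindent\textbf{Proof strategy.} The plan is to reduce the mild-solution identity \eqref{eq:mildSolution} to a scalar singular Volterra inequality for $\norm{y_t}_\alpha$, to remove the dissipative weight by an exponential substitution, to invoke the singular Henry--Gronwall inequality, and then to tame the resulting Mittag--Leffler kernel by means of \eqref{eq:MittagLefflerBound}.

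First I would bound the three terms in \eqref{eq:mildSolution}: the linear part by $\norm{S_t y_0}_\alpha\le C_A e^{-\lambda_A t}\norm{y_0}_\alpha$ via \eqref{hg:1} with $\sigma=0$, and the two convolutions by Lemma \ref{lem:IntegralEstimates}. Multiplying the resulting inequality by $e^{\lambda_A t}$ and setting $v(t):=e^{\lambda_A t}\norm{y_t}_\alpha$, the factor $e^{-\lambda_A(t-r)}$ inside the Volterra integral cancels and one is left with
\begin{align*}
v(t)\le h(t)+C_{-\sigma_F}C_F\int_0^t(t-r)^{-\sigma_F}v(r)\,\txtd r,
\end{align*}
where $h(t):=C_A\norm{y_0}_\alpha+C_2 e^{\lambda_A t}+C_GC_1 e^{\lambda_A}\sum_{l=0}^{\lfloor t\rfloor}e^{\lambda_A l}P_3(\omega,[l,l+1])$ is nonnegative, nondecreasing in $t$, and $v,h\in L^\infty_{\rm loc}([0,\infty);[0,\infty))$. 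This is exactly the hypothesis of the singular Henry--Gronwall inequality with $\beta:=1-\sigma_F\in(0,1]$ and $M:=C_{-\sigma_F}C_F$.

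Next, since $(\Gamma(\beta)M)^{1/\beta}=L/2$ by the definition of $L$, that inequality gives
\begin{align*}
v(t)\le h(t)+\frac{L}{2}\int_0^t h(r)\,E^\prime_{\beta,1}\!\Big((t-r)\tfrac{L}{2}\Big)\,\txtd r,
\end{align*}
and I would control the Mittag--Leffler integral by splitting it at $t-r=t_0$, with $t_0$ chosen so that $t_0L/2$ lies in the regime where \eqref{eq:MittagLefflerBound} holds. On the near block $t-r\le t_0$ I use $h(r)\le h(t)$ together with the exact primitive $\frac{L}{2}\int_0^{t_0}E^\prime_{\beta,1}(uL/2)\,\txtd u=E_{\beta,1}(t_0L/2)-1$ (for $t<t_0$ one replaces $t_0$ by $t$ and uses monotonicity of $E_{\beta,1}$); on the far block $t-r>t_0$ I bound $E^\prime_{\beta,1}((t-r)L/2)\le M_{1-\sigma_F}e^{(t-r)L}$ via \eqref{eq:MittagLefflerBound}. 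Combined with the standalone $h(t)$ this yields
\begin{align*}
v(t)\le E_{\beta,1}(t_0L/2)\,h(t)+\frac{LM_{1-\sigma_F}}{2}\,e^{Lt}\int_0^t h(r)\,e^{-Lr}\,\txtd r.
\end{align*}
Finally I would substitute the three pieces of $h$, evaluate the elementary integrals $\int_0^t e^{-Lr}\,\txtd r\le L^{-1}$, $\int_0^t e^{(\lambda_A-L)r}\,\txtd r\le\lambda^{-1}e^{\lambda t}$ and $\int_l^t e^{-Lr}\,\txtd r\le L^{-1}e^{-Ll}$ (noting $e^{\lambda_A l}e^{-Ll}=e^{\lambda l}$), return to $\norm{y_t}_\alpha e^{\lambda t}=v(t)e^{-Lt}$, use $e^{-Lt}\le1$ and $e^{L(l-t)}\le1$ for $0\le l\le t$, and observe $\lfloor t\rfloor=n$ on $[n,n+1]$. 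Collecting the coefficients of $\norm{y_0}_\alpha$, of $e^{\lambda t}$ and of $\sum_{l=0}^n e^{\lambda l}P_3(\omega,[l,l+1])$ and majorizing them by the quantities defining $\widetilde C_A$, $\widetilde C_2$ and $\widetilde C_1$ then gives \eqref{ineq:AprioriEst}.

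The hard part will be this last bookkeeping. The Gronwall step unavoidably introduces the growth $e^{Lt}$, and it is only after multiplication by $e^{-Lt}$ that the net dissipative rate $e^{\lambda t}$ with $\lambda=\lambda_A-L$ survives; this requires $\lambda_A>L$ (an implicit hypothesis, needed e.g.\ for the $\lambda$ in the denominator of $\widetilde C_2$ to be positive). One also has to fix $t_0$ and the constant $M_{1-\sigma_F}$ compatibly --- possible because, as remarked after \eqref{eq:MittagLefflerBound}, $M_{1-\sigma_F}$ may be enlarged at will without leaving the validity range of \eqref{eq:MittagLefflerBound} --- so that the near-block contribution $E_{\beta,1}(t_0L/2)$ is dominated by the quantity $\max\{\widetilde L,M_{1-\sigma_F}/2\}$ entering $\widetilde C_A$ and $\widetilde C_1$ (and by $\widetilde L$ in $\widetilde C_2$). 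Everything else is routine estimation of geometric and Gamma-type integrals.
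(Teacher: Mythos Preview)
Your proposal is correct and follows essentially the same route as the paper: bound the mild solution via Lemma~\ref{lem:IntegralEstimates}, pass to $v(t)=e^{\lambda_A t}\|y_t\|_\alpha$, apply the singular Henry--Gronwall inequality, split the Mittag--Leffler integral at $t-r=t_0$ using \eqref{eq:MittagLefflerBound} on the far block and the exact primitive on the near block, then multiply by $e^{-Lt}$ and collect constants. One cosmetic point: $\lfloor t\rfloor=n$ fails at $t=n+1$, so, as the paper does, argue first for $t\in[n,n+1)$ and extend to the right endpoint by continuity of $t\mapsto\|y_t\|_\alpha$.
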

	\begin{proof}
		First assume $t\in [n,n+1)$. Due to \eqref{eq:mildSolution} we can estimate the path component $y_t$ of the solution by
		\begin{align}\label{eq:ProofSolEstimate1}
			\norm{y_t}_\alpha\leq \norm{S_t y_0}_\alpha +\norm{\int_0^t S_{t-r}F(y_r)~\txtd r}_\alpha +\norm{\int_0^t S_{t-r}G(y_r)~\txtd \X_r(\omega)}_\alpha.
		\end{align}
		We now multiply \eqref{eq:ProofSolEstimate1} by $e^{\lambda_A t}$ and get together with \eqref{hg:1} and Lemma \ref{lem:IntegralEstimates}
		\begin{align*}
			\norm{y_t}_\alpha e^{\lambda_A t}&\leq C_A \norm{y_0}_\alpha +C_{2} e^{\lambda_A t}+C_GC_1 \sum_{l=0}^{n}e^{\lambda_A(l+1)}P_3(\omega,[l,l+1])\\
			&+C_{-\sigma_F} C_F\int_0^t  (t-r)^{-\sigma_F}e^{\lambda_A r}\norm{y_r}_{\alpha}~\txtd r,
		\end{align*}
		which fits in the setting of the singular Gronwall-Henry lemma. In conclusion, we now apply Lemma \ref{ineq:singularGronwall2} to $v(r):=\norm{y_r}_\alpha e^{\lambda_A r}$ and obtain 
		\begin{align}\label{eq:ProofSolEstimate2}
			\begin{split}
				&\norm{y_t}_\alpha e^{\lambda_A t}\leq C_A \norm{y_0}_\alpha +C_{2} e^{\lambda_A t}+C_GC_1 \sum_{l=0}^{n}e^{\lambda_A(l+1)}P_3(\omega,[l,l+1])\\
				&+\frac{L}{2}\int_{0}^t \left(C_A \norm{y_0}_\alpha +C_{2} e^{\lambda_A r}+C_GC_1 \sum_{l=0}^{\lfloor r \rfloor}e^{\lambda_A(l+1)}P_3(\omega,[l,l+1])\right)E^\prime_{1-\sigma_F,1}\left((t-r)\frac{L}{2}\right)~\txtd r,
			\end{split}
		\end{align}
		with $L:=2(C_{-\sigma_F}C_F\Gamma(1-\sigma_F))^{\frac{1}{1-\sigma_F}}$. Further, due to the time-continuity of $y$, this also holds for $t=n+1$. Now we have to bound the derivative of the Mittag-Leffler function. Since the estimate~\eqref{eq:MittagLefflerBound} is only valid for large values, we consider $t_0>0$ such that $t_0\frac{L}{2}$ is large enough for \eqref{eq:MittagLefflerBound} to hold and introduce 
		$$h(r):=C_A \norm{y_0}_\alpha +C_{2} e^{\lambda_A r}+C_GC_1 \sum_{l=0}^{\lfloor r \rfloor}e^{\lambda_A(l+1)}P_3(\omega,[l,l+1]).$$
		For $t>t_0$ we split the integral as follows and use that $h$ is monotonously increasing
		\begin{align*}
			&\int_0^t h(r)E^\prime_{1-\sigma_F,1}\left((t-r)\frac{L}{2}\right)~\txtd r=\int_{0}^{t-t_0} h(r)E^\prime_{1-\sigma_F,1}\left((t-r)\frac{L}{2}\right)~\txtd r \\
			&+ \int_{t-t_0}^t h(r)E^\prime_{1-\sigma_F,1}\left((t-r)\frac{L}{2}\right)~\txtd r\\
			&\leq M_{1-\sigma_F}\int_{0}^{t-t_0} h(r) e^{(t-r)L}~\txtd r+\int_{0}^{t_0} h(t-r)E^\prime_{1-\sigma_F,1}\left(r\frac{L}{2}\right)~\txtd r\\
			&\leq M_{1-\sigma_F}\int_{0}^{t} h(r) e^{(t-r)L}~\txtd r+h(t)\int_{0}^{t_0} E^\prime_{1-\sigma_F,1}\left(r\frac{L}{2}\right)~\txtd r\\
			&=M_{1-\sigma_F}\int_{0}^{t} h(r) e^{(t-r)L}~\txtd r+h(t) \frac{2E_{1-\sigma_F,1}\left(t_0\frac{L}{2}\right)}{L}.
		\end{align*}
		We set $\widetilde{L}:=\frac{2E_{1-\sigma_F,1}\left(t_0\frac{L}{2}\right)}{L}+1$ and multiply \eqref{eq:ProofSolEstimate2} by $e^{-Lt}$ to get
		\begin{align*}
			\norm{y_t}_\alpha e^{\lambda t}&\leq \widetilde{L}\left(C_A \norm{y_0}_\alpha e^{-Lt}+C_{2} e^{\lambda t}+C_GC_1 \sum_{l=0}^{n}e^{-Lt}e^{\lambda_A(l+1)}P_3(\omega,[l,l+1])\right)\\
			&+\frac{LM_{1-\sigma_F}}{2}\int_{0}^t \left(C_A \norm{y_0}_\alpha +C_{2} e^{\lambda_A r}+C_GC_1 \sum_{l=0}^{\lfloor r \rfloor}e^{\lambda_A(l+1)}P_3(\omega,[l,l+1])\right)e^{-Lr}~\txtd r,
		\end{align*}
		which makes it easier to evaluate the integrals in the second line, where the only non-trivial is the third one.  By Fubini's theorem we obtain
		\begin{align*}
			\int_{0}^t \sum_{l=0}^{\lfloor r \rfloor}e^{\lambda_A(l+1)}P_3(\omega,[l,l+1])e^{-Lr}~\txtd r&\leq \sum_{l=0}^{n} e^{\lambda l}e^{\lambda_A}P_3(\omega,[l,l+1])\int_{l}^t e^{-L (r-l)}~\txtd r\\
			&= \sum_{l=0}^{n} e^{\lambda l}e^{\lambda_A}P_3(\omega,[l,l+1])\left(\frac{1-e^{-L(t-l)}}{L} \right).
		\end{align*}
		Putting all these estimates together, this leads to
		\begin{align*}
			\norm{y_t}_\alpha e^{\lambda t}&\leq C_A \norm{y_0}_\alpha\left(\widetilde{L}e^{-Lt} +\frac{M_{1-\sigma_F}}{2}(1-e^{-Lt})\right)+C_2\left(\widetilde{L}e^{\lambda t} +\frac{LM_{1-\sigma_F}}{2\lambda}(e^{\lambda t}-1)\right)\\
			&+C_GC_1 \sum_{l=0}^{n} e^{\lambda l}e^{\lambda_A}P_3(\omega,[l,l+1])\left(\widetilde{L}e^{-L(t-l)}+\frac{M_{1-\sigma_F}}{2}(1-e^{-L(t-l)}) \right)\\
			&\leq \widetilde{C}_A \norm{y_0}_\alpha+C_2e^{\lambda t}\left(\widetilde{L} +\frac{LM_{1-\sigma_F}}{2\lambda}(1-e^{-\lambda t})\right)+\widetilde{C}_1C_G \sum_{l=0}^{n}e^{\lambda l}P_3(\omega,[l,l+1])\\
			&\leq \widetilde{C}_A \norm{y_0}_\alpha+\widetilde{C}_2e^{\lambda t}+\widetilde{C}_1C_G \sum_{l=0}^{n}e^{\lambda l}P_3(\omega,[l,l+1]),
		\end{align*}
		with $\lambda=\lambda_A-L$ and $\widetilde{C}_1=C_1e^{\lambda_A}\max\left\{\widetilde{L}, \frac{M_{1-\sigma_F}}{2}\right\},
		\widetilde{C}_2=C_2\left( \widetilde{L}+\frac{LM_{1-\sigma_F}}{2\lambda}\right),
		\widetilde{C}_A=C_A\max\left\{\widetilde{L}, \frac{M_{1-\sigma_F}}{2}\right\}$.
		\qed\\
	\end{proof}
	
	The right-hand side of \eqref{ineq:AprioriEst} still depends on the solution $y$, via $P_3(\omega,[l,l+1])$. However, combining the discrete Gronwall lemma with \eqref{est:SolEst}, it is possible to get an estimate where the right-hand side is independent of the solution.
	First, recall that 
	\begin{align*}
		P_3(\omega,[l,l+1])&=\rho_{\gamma,[l,l+1]}(\X(\omega))^2(1+\norm{y,y^\prime}_{\cD^{2\gamma}_{X,\alpha}([l,l+1])}),
	\end{align*}
	and that the controlled rough path norm of the solution $(y,y')=(y,G(y))$ can be estimated due to Theorem \ref{thm:VarRiedelEst} by
	\begin{align*}
		\norm{y,y^\prime}_{\cD^{2\gamma}_{X(\omega),\alpha}([l,l+1])}\leq \norm{y_l}_\alpha P_1(\omega,[l,l+1])+P_2(\omega,[l,l+1])
	\end{align*}
	for $l\in \N_0$.
	\begin{lemma}\label{lem:discreteEst}
		Let $n\in \N_0$. Then we get the estimate
		\begin{align*}
			\norm{y_n}_\alpha&\leq \widetilde{C}_A \norm{y_0}_\alpha e^{-\lambda n}\prod_{j=0}^{n-1} (1+H_1(\omega,[j,j+1])) +\sum_{k=0}^{n-1}e^{-\lambda (n-k)}H_2(\omega,[k,k+1])\\
			&\times \prod_{j=k+1}^{n-1} (1+H_1(\omega,[j,j+1])),
		\end{align*}
		where we define
		\begin{align}\label{eq:DefH}
			\begin{split}
				H_1(\omega,[l,l+1])&:=\widetilde{C}_1C_G \rho_{\gamma,[l,l+1]}(\X(\omega))^2 P_1(\omega,[l,l+1]),\\
				H_2(\omega,[l,l+1])&:=\max\{\widetilde{C}_Ae^\lambda,\widetilde{C}_1C_G\}(1+\rho_{\gamma,[l,l+1]}(\X(\omega))^2(1+P_2(\omega,[l,l+1]))).
			\end{split}
		\end{align}
	\end{lemma}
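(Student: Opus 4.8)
The plan is to feed the a-priori bound of Lemma~\ref{lem:SolEst} into the discrete Gronwall Lemma~\ref{lem:discreteGronwall}, applied to the sequence $u_n:=e^{\lambda n}\norm{y_n}_\alpha$, and then to divide the result by $e^{\lambda n}$.

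The first, and only delicate, step is to read~\eqref{ineq:AprioriEst} at $t=n$ on the interval $[n-1,n]$ rather than on $[n,n+1]$. Reading it on $[n,n+1]$ would produce the summand $e^{\lambda n}P_3(\omega,[n,n+1])$, and by~\eqref{est:SolEst} this contains $\norm{y_n}_\alpha$, so that $u_n$ would reappear on the right-hand side and the Gronwall structure would be destroyed. Reading~\eqref{ineq:AprioriEst} with $n$ replaced by $n-1$ at $t=n\in[n-1,n]$ instead gives, for $n\geq1$,
\[
u_n\leq \widetilde C_A\norm{y_0}_\alpha+\widetilde C_2 e^{\lambda n}+\widetilde C_1 C_G\sum_{l=0}^{n-1}e^{\lambda l}P_3(\omega,[l,l+1]);
\]
the case $n=0$ is trivial, since $\widetilde C_A\geq1$ (because $C_A\geq1$ and $\widetilde L\geq1$), so that $u_0=\norm{y_0}_\alpha\leq\widetilde C_A\norm{y_0}_\alpha$.

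Next I would insert the controlled-rough-path bound. Since the restriction of the global solution to $[l,l+1]$ is a controlled rough path and this interval has length $1>d$ by Remark~\ref{rem:ConstantsSolEst}~i), Theorem~\ref{thm:VarRiedelEst} in the abbreviated form~\eqref{est:SolEst} yields $\norm{y,y^\prime}_{\cD^{2\gamma}_{X(\omega),\alpha}([l,l+1])}\leq\norm{y_l}_\alpha P_1(\omega,[l,l+1])+P_2(\omega,[l,l+1])$. Recalling $P_3(\omega,[l,l+1])=\rho_{\gamma,[l,l+1]}(\X(\omega))^2(1+\norm{y,y^\prime}_{\cD^{2\gamma}_{X,\alpha}([l,l+1])})$ and $e^{\lambda l}\norm{y_l}_\alpha=u_l$, multiplying by $\widetilde C_1 C_G$ and comparing with~\eqref{eq:DefH} gives
\[
\widetilde C_1 C_G\,e^{\lambda l}P_3(\omega,[l,l+1])\leq H_1(\omega,[l,l+1])\,u_l+e^{\lambda l}\,\widetilde C_1 C_G\,\rho_{\gamma,[l,l+1]}(\X(\omega))^2\bigl(1+P_2(\omega,[l,l+1])\bigr),
\]
and the last term is bounded by $e^{\lambda l}H_2(\omega,[l,l+1])$ because $\max\{\widetilde C_A e^\lambda,\widetilde C_1 C_G\}\geq\widetilde C_1 C_G$. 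It then remains to absorb the term $\widetilde C_2 e^{\lambda n}$ into a forcing sum: for $n\geq1$ one has $e^{\lambda n}\leq e^\lambda\sum_{k=0}^{n-1}e^{\lambda k}$, and since the constant contribution to $H_2(\omega,[k,k+1])$ equals $\max\{\widetilde C_A e^\lambda,\widetilde C_1 C_G\}\geq\widetilde C_A e^\lambda$ (enlarging $\widetilde C_A$ if necessary so that $\widetilde C_A\geq\widetilde C_2$), this gives $\widetilde C_2 e^{\lambda n}\leq\sum_{k=0}^{n-1}e^{\lambda k}H_2(\omega,[k,k+1])$.

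Collecting these bounds puts the inequality in the canonical form $u_n\leq a+\sum_{k=0}^{n-1}b_k u_k+\sum_{k=0}^{n-1}c_k$ with $a=\widetilde C_A\norm{y_0}_\alpha$, $b_k=H_1(\omega,[k,k+1])$ and $c_k=e^{\lambda k}H_2(\omega,[k,k+1])$, valid for every $n\in\N_0$. Lemma~\ref{lem:discreteGronwall}, together with $\max\{a,u_0\}=a$ (again $\widetilde C_A\geq1$), then yields
\[
u_n\leq\widetilde C_A\norm{y_0}_\alpha\prod_{j=0}^{n-1}(1+H_1(\omega,[j,j+1]))+\sum_{k=0}^{n-1}e^{\lambda k}H_2(\omega,[k,k+1])\prod_{j=k+1}^{n-1}(1+H_1(\omega,[j,j+1])),
\]
and multiplying by $e^{-\lambda n}$ is exactly the asserted estimate. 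I expect the only genuine work to be the constant bookkeeping in the previous paragraph: tracking which constant dominates at each step so that everything collapses into $H_1$ and $H_2$, and in particular checking that the term $\widetilde C_2 e^{\lambda n}$, which is absent from the target formula, is legitimately hidden inside the deliberately generous constants in~\eqref{eq:DefH}. Once the inequality has the standard discrete-Gronwall shape, nothing else is needed.
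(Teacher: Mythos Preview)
Your proposal is correct and follows essentially the same route as the paper: apply Lemma~\ref{lem:SolEst} at the integer time $t=n$ (with the upper index shifted so the sum runs only to $n-1$), substitute~\eqref{est:SolEst} into $P_3$, absorb the constant term $\widetilde C_2 e^{\lambda n}$ into the forcing sum via the deliberately enlarged constant $\max\{\widetilde C_A e^{\lambda},\widetilde C_1 C_G\}$ appearing in $H_2$, and then invoke the discrete Gronwall Lemma~\ref{lem:discreteGronwall} with $u_n=e^{\lambda n}\norm{y_n}_\alpha$. You are in fact more explicit than the paper about why the sum stops at $n-1$ (the paper simply writes it down) and about the constant comparison needed to hide $\widetilde C_2 e^{\lambda n}$; the paper performs exactly the same absorption step without comment, so your parenthetical ``enlarging $\widetilde C_A$ if necessary'' matches what the paper tacitly does.
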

	\begin{proof}
		We apply Lemma \ref{lem:SolEst} for $t=n$ and obtain
		\begin{align*}
			\norm{y_n}_\alpha e^{\lambda n}&\leq \widetilde{C}_A \norm{y_0}_\alpha+\widetilde{C}_2e^{\lambda n}+\widetilde{C}_1C_G \sum_{l=0}^{n-1}e^{\lambda l}P_3(\omega,[l,l+1])\\
			&= \widetilde{C}_A \norm{y_0}_\alpha+\widetilde{C}_2e^{\lambda n}+\widetilde{C}_1C_G \sum_{l=0}^{n-1}e^{\lambda l}\rho_{\gamma,[l,l+1]}(\X(\omega))^2(1+\norm{y,y^\prime}_{\cD^{2\gamma}_{X(\omega),\alpha}})\\
			&= \widetilde{C}_A\norm{y_0}_\alpha+\widetilde{C}_2e^{\lambda n}+\widetilde{C}_1C_G \sum_{l=0}^{n-1} e^{\lambda l}\rho_{\gamma,[l,l+1]}(\X(\omega))^2(1+P_2(\omega,[l,l+1]))\\
			&+\widetilde{C}_1C_G \sum_{l=0}^{n-1} e^{\lambda l}\rho_{\gamma,[l,l+1]}(\X(\omega))^2\norm{y_l}_\alpha P_1(\omega,[l,l+1])\\
			&\leq \widetilde{C}_A\norm{y_0}_\alpha+\max\{\widetilde{C}_Ae^\lambda,\widetilde{C}_1C_G\} \sum_{l=0}^{n-1} e^{\lambda l}(1+\rho_{\gamma,[l,l+1]}(\X(\omega))^2(1+P_2(\omega,[l,l+1])))\\
			&+\widetilde{C}_1C_G \sum_{l=0}^{n-1} e^{\lambda l}\rho_{\gamma,[l,l+1]}(\X(\omega))^2\norm{y_l}_\alpha P_1(\omega,[l,l+1]).
		\end{align*}
		With this we can now use the discrete Gronwall Lemma \ref{lem:discreteGronwall} for $u_n:=\norm{y_n}_\alpha e^{\lambda n}$ to obtain
		\begin{align*}
			\norm{y_n}_\alpha e^{\lambda n}&\leq \widetilde{C}_A \norm{y_0}_\alpha \prod_{j=0}^{n-1} \left(1 +\widetilde{C}_1C_G \rho_{\gamma,[j,j+1]}(\X(\omega))^2 P_1(\omega,[j,j+1])\right)\\
			&+\sum_{k=0}^{n-1}\max\{\widetilde{C}_Ae^\lambda,\widetilde{C}_1C_G\}e^{\lambda k}(1+\rho_{\gamma,[k,k+1]}(\X(\omega))^2(1+P_2(\omega,[k,k+1])))\\
			&\times\prod_{j=k+1}^{n-1} \left(1 +\widetilde{C}_1C_G \rho_{\gamma,[j,j+1]}(\X(\omega))^2 P_1(\omega,[j,j+1])\right).
		\end{align*}
		\qed\\
	\end{proof}
	
	The last ingredient required for the existence of an absorbing set is based on ergodic properties of the noise. Since $\mathbf{X}$ is Gaussian all moments of $X$ and $\XX$ exists and in particular all moments of the respective H\"older seminorms. Regarding the ergodicity of the metric dynamical system $(\theta_t)_{t\in \R}$, Birkhoff's ergodic theorem leads to
	\begin{align}\label{ineq:ErgodicBound}
		\begin{split}
			\limsup\limits_{n\to \infty}\frac{1}{n}\sum_{j=1}^{n} \left[X(\theta_{-j}\omega)\right]_{\gamma,J}^q&= \E\left[\left[X\right]_{\gamma,J}^q\right]=:K_q\\
			\limsup\limits_{n\to \infty}\frac{1}{n}\sum_{j=1}^{n} \left[\XX(\theta_{-j}\omega)\right]_{\gamma,\Delta_J}^q&= \E\left[\left[\XX\right]_{2\gamma,\Delta_J}^q\right]=:\mathbb{K}_q,
		\end{split}
	\end{align}
	for every compact interval $J$ and $q\geq 1$. We further set $\mathbf{K}_q:=K_q+\mathbb{K}_q$ and prove our main result.  For a better comprehension we recall that $\widetilde{N}=\left\lceil(4\widetilde{M})^{\frac{1}{1-\max\{\sigma_F,2\gamma\}}}\right\rceil$, where $\widetilde{M}$ depends on $F,G$ as discussed in Remark \ref{rem:ConstantsSolEst}, $\widetilde{C}_1=C_1e^{\lambda_A}\max\left\{\widetilde{L}, \frac{M_{1-\sigma_F}}{2}\right\}$ and define $$C(\widetilde{N}):=\max\{1+\widetilde{N},2(1+\widetilde{N}),2^{4(1+\widetilde{N})}M^{1+\widetilde{N}}3\}.$$
	Note that $\widetilde{N}$  determines the highest order  moment of $(X,\xx)$ that we must control. Since the noise was assumed to be Gaussian, the value of $\widetilde{N}$ is not important. 
	\begin{lemma}\label{lem:AbsorbingSet}
		Let Assumptions \ref{ass}, \ref{ass:Nonlin} and \ref{ass:Noise} be satisfied and further assume that 
		\begin{align}\label{ineq:AssumptionOnLambda}
			\lambda_A-2(C_{-\sigma_F}C_F\Gamma(1-\sigma_F))^{\frac{1}{1-\sigma_F}}>c (\mathbf{K}_q+1),
		\end{align}
		where $q$ and $c$ are given by
		\begin{align}\label{eq:ConstantQandC}
			c &:=C(\widetilde{N})\max\{\widetilde{M},\widetilde{C}_1C_G\}, \quad q:=\frac{4(1+\widetilde{N})}{\gamma-\eta}.
		\end{align}
		Then the random dynamical system $\phi$ associated to \eqref{eq:MainEq}, possesses a random pullback $\mathscr{D}$-absorbing set $B(\omega)$.
	\end{lemma}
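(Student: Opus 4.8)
The plan is to produce a positive tempered random variable $R$ with
\[
\limsup_{t\to\infty}\norm{\phi(t,\theta_{-t}\omega,y_0(\theta_{-t}\omega))}_\alpha\le R(\omega)
\]
for every $D\in\mathscr{D}$, every $\omega\in\Omega$ and every choice $y_0(\theta_{-t}\omega)\in D(\theta_{-t}\omega)$. By the discussion following Definition~\ref{def:absorbing}, the ball $B(\omega):=B(0,R(\omega)+\overline\delta)$ is then a random pullback $\mathscr{D}$-absorbing set, and the only remaining point is that $B\in\mathscr{D}$, i.e.\ that $R$ is tempered. The estimate starts from Lemma~\ref{lem:discreteEst} applied with $\omega$ replaced by $\theta_{-n}\omega$ and $t=n$, which controls $\norm{\phi(n,\theta_{-n}\omega,y_0(\theta_{-n}\omega))}_\alpha=\norm{y_n}_\alpha$ by a product and a weighted sum of the quantities $H_1(\theta_{-n}\omega,[j,j+1])$ and $H_2(\theta_{-n}\omega,[j,j+1])$ from \eqref{eq:DefH}.

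Since $H_1,H_2$ depend on $\omega$ only through $\rho_{\gamma,[j,j+1]}(\X(\cdot))$, $N_{j,j+1}(\cdot)$ and $[X(\cdot)]_{\gamma,[j,j+1]}$, Lemma~\ref{lem:GreedTime}~iii) and the cocycle property of $\X$ give $H_i(\theta_{-n}\omega,[j,j+1])=H_i(\theta_{-(n-j)}\omega,[0,1])$. Reindexing Lemma~\ref{lem:discreteEst} by $i=n-j$ (resp.\ $i=n-k$) turns it into
\[
\norm{y_n}_\alpha\le\widetilde C_A\norm{y_0}_\alpha e^{-\lambda n}\prod_{i=1}^{n}\big(1+H_1(\theta_{-i}\omega,[0,1])\big)+\sum_{i=1}^{n}e^{-\lambda i}H_2(\theta_{-i}\omega,[0,1])\prod_{m=1}^{i-1}\big(1+H_1(\theta_{-m}\omega,[0,1])\big),
\]
which is nondecreasing in $n$ in its last summand, so the natural candidate is $R(\omega):=\sum_{i=1}^{\infty}e^{-\lambda i}H_2(\theta_{-i}\omega,[0,1])\prod_{m=1}^{i-1}\big(1+H_1(\theta_{-m}\omega,[0,1])\big)$. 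The Gaussianity of $\X$ together with Lemma~\ref{lem:IntegrableBound} ensures $\log(1+H_1(\cdot,[0,1]))\in L^1(\Omega)$ and $H_2(\cdot,[0,1])\in\bigcap_{p\ge1}L^p(\Omega)$, so Birkhoff's ergodic theorem and Borel--Cantelli give, for a.e.\ $\omega$,
\[
\tfrac1n\sum_{i=1}^n\log\big(1+H_1(\theta_{-i}\omega,[0,1])\big)\to\mu_1:=\E\big[\log(1+H_1(\cdot,[0,1]))\big],\qquad \tfrac1i\log^+H_2(\theta_{-i}\omega,[0,1])\to0 .
\]

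The decisive step, which I expect to be the main obstacle, is to show $\mu_1<\lambda=\lambda_A-L$ under assumption~\eqref{ineq:AssumptionOnLambda}. A careful but elementary estimate -- using $\log(1+x)\le\log2+\log^+x$, the explicit form $H_1=\widetilde C_1C_G\,\rho_{\gamma,[0,1]}(\X)^2P_1$ with $P_1=\widetilde N\big(MN_{0,1}(1+[X]_{\gamma,[0,1]})e^{N_{0,1}\widetilde M}\big)^{\widetilde N+1}$, the greedy-time bound $N_{0,1}\le1+\chi^{-1/(\gamma-\eta)}\big([X]_{\gamma,[0,1]}^{1/(\gamma-\eta)}+[\XX]_{2\gamma,\Delta_{[0,1]}}^{1/(2(\gamma-\eta))}\big)$ from Lemma~\ref{lem:GreedTime}~ii), $\chi<1$, and $x^a\le1+x^q$ for $0\le a\le q$ -- bounds $\E[\log(1+H_1(\cdot,[0,1]))]$ by $c(\mathbf K_q+1)$ with precisely the constants $c,q$ of \eqref{eq:ConstantQandC}, so \eqref{ineq:AssumptionOnLambda} yields $\mu_1\le c(\mathbf K_q+1)<\lambda$. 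Fixing $\varepsilon>0$ with $\mu_1+2\varepsilon<\lambda$, for large $i$ the $i$-th term of $R(\omega)$ is dominated by $e^{-\lambda i}e^{\varepsilon i}e^{(i-1)(\mu_1+\varepsilon)}$, a summable geometric sequence, so $R(\omega)<\infty$ a.s.; moreover $e^{-\lambda n}\prod_{i=1}^n(1+H_1(\theta_{-i}\omega,[0,1]))=e^{-n(\lambda-\mu_1)+o(n)}\to0$, and since $D$ is tempered $\norm{y_0(\theta_{-n}\omega)}_\alpha\le\sup_{x\in D(\theta_{-n}\omega)}\norm{x}_\alpha=e^{o(n)}$, so the first summand vanishes and $\limsup_{n\to\infty}\norm{\phi(n,\theta_{-n}\omega,y_0(\theta_{-n}\omega))}_\alpha\le R(\omega)$.

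It remains to pass from integer times to all $t\ge0$ and to verify temperedness of $R$. For $t=n+s$ with $n=\lfloor t\rfloor$, $s\in[0,1)$, the cocycle property gives $\phi(t,\theta_{-t}\omega,\cdot)=\phi\big(n,\theta_{-n}\omega,\phi(s,\theta_{-n-s}\omega,\cdot)\big)$; bounding the inner short-time flow by \eqref{est:SolEst} on $[0,s]\subset[0,1]$ together with monotonicity of the controls in the interval yields $\norm{\phi(s,\theta_{-n-s}\omega,y_0(\theta_{-t}\omega))}_\alpha\le\norm{y_0(\theta_{-t}\omega)}_\alpha\widehat P_1(\theta_{-n}\omega)+\widehat P_2(\theta_{-n}\omega)$ with $\widehat P_j(\omega):=\sup_{r\in[0,1]}P_j(\theta_{-r}\omega,[0,1])$, and one checks as in Lemma~\ref{lem:IntegrableBound} that $\widehat P_j\in\bigcap_{p\ge1}L^p(\Omega)$, hence $\widehat P_j(\theta_{-n}\omega)=e^{o(n)}$; inserting this into the integer-time estimate gives the claimed $\limsup\le R(\omega)$ for all $t$. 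Finally, temperedness of $R$ follows from \eqref{eq:EquivTempered}: running the same Birkhoff estimates on the shifted sequences shows $R(\theta_{\mp t}\omega)\le C_\varepsilon(\omega)e^{\varepsilon t}$ for every $\varepsilon>0$ and $t$ large (equivalently, $\E[\sup_{|s|\le1}\log^+R(\theta_s\omega)]<\infty$), so $\lim_{t\to\pm\infty}\tfrac{\log^+R(\theta_t\omega)}{\abs t}=0$. Thus $R$ is positive and tempered, $B(\omega)=B(0,R(\omega)+\overline\delta)\in\mathscr{D}$ absorbs every $D\in\mathscr{D}$ in the sense of Definition~\ref{def:absorbing}, and the proof is complete.
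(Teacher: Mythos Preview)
Your proposal is correct and follows essentially the same strategy as the paper: combine Lemma~\ref{lem:discreteEst} with the integrable bound \eqref{est:SolEst}, control the exponential growth rate of $\prod(1+H_1)$ via Birkhoff's ergodic theorem, and verify that under \eqref{ineq:AssumptionOnLambda} this rate is strictly below $\lambda$ so that the remaining sum defines a finite tempered random variable.

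The one notable organizational difference is in the treatment of non-integer times. The paper works directly with $t\in[n,n+1]$ from the outset: it applies Lemma~\ref{lem:discreteEst} to the path driven by $\X(\theta_{-t}\omega)$, which after shifting produces the quantities $H_i(\theta_{-j}\omega,[-\varepsilon,1-\varepsilon])$ with $\varepsilon=t-n\in[0,1]$, and then takes $\sup_{\varepsilon\in[0,1]}$; this forces an extra argument for measurability of the radius, using that $\X$ is geometric so that H\"older seminorms depend continuously on the interval (Remark~\ref{rem:RPContHolderNorm}). You instead establish the estimate first along integer times (where the shift lands exactly on $H_i(\theta_{-i}\omega,[0,1])$, no $\varepsilon$ needed), and only afterwards bridge $t=n+s$ via the cocycle identity $\phi(t,\theta_{-t}\omega,\cdot)=\phi(n,\theta_{-n}\omega,\phi(s,\theta_{-n-s}\omega,\cdot))$ together with a short-time bound. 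This buys you a cleaner series $R(\omega)$ that is manifestly measurable as a countable sum; the price is the auxiliary quantity $\widehat P_j(\omega)=\sup_{r\in[0,1]}P_j(\theta_{-r}\omega,[0,1])$, whose integrability you obtain by the monotonicity $\widehat P_j(\omega)\le P_j(\omega,[-1,1])$. A second cosmetic difference is your use of $\log(1+x)\le\log 2+\log^+x$ in place of the paper's $\log(1+ae^b)\le a+b$; both routes, combined with Lemma~\ref{lem:GreedTime}~ii) and the moment comparison $x^a\le 1+x^q$, lead to the bound $\mu_1\le c(\mathbf K_q+1)$.
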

	\begin{proof}
		We fix $D(\omega)\in \mathscr{D}$ and estimate $\phi(t,\theta_{-t}\omega,y_0(\theta_{-t}\omega))$ for $y_0(\omega)\in D(\omega)$. To this aim we combine Lemma \ref{lem:discreteEst} and \eqref{est:SolEst} to obtain for $t\in [n,n+1]$
		\begin{align*}
			\norm{\phi(t,\theta_{-t}\omega,y_0(\theta_{-t}\omega))}_\alpha &\leq \norm{y_n(\theta_{-t}\omega)}_\alpha P_1(\theta_{-t}\omega,[n,n+1])+P_2(\theta_{-t}\omega,[n,n+1])\\
			&\leq \widetilde{C}_A \norm{y_0(\theta_{-t}\omega)}_\alpha P_1(\theta_{-t}\omega,[n,n+1]) e^{-\lambda n}\prod_{j=0}^{n-1} (1+H_1(\theta_{-t}\omega,[j,j+1])) \\
			&+P_1(\theta_{-t}\omega,[n,n+1])\sum_{k=0}^{n-1}e^{-\lambda (n-k)}H_2(\theta_{-t}\omega,[k,k+1])\\
			&\times \prod_{j=k+1}^{n-1} (1+H_1(\theta_{-t}\omega,[j,j+1]))+P_2(\theta_{-t}\omega,[n,n+1]).
		\end{align*}
		Due to \ref{lem:GreedTime} iii) we have $P_i(\theta_\tau \omega,[s,t])=P_i(\omega,[s+\tau,t+\tau])$ as well as $H_i(\theta_\tau \omega,[s,t])=H_i(\omega,[s+\tau,t+\tau])$ for $i=1,2$ and $\tau\in \R$. Applying this to the previous estimate, we obtain
		\begin{align*}
			\norm{\phi(t,\theta_{-t}\omega,y_0(\theta_{-t}\omega))}_\alpha &\leq \widetilde{C}_A \norm{y_0(\theta_{-t}\omega)}_\alpha P_1(\omega,[-1,1]) e^{-\lambda n}\sup\limits_{\varepsilon\in [0,1]}\prod_{j=1}^{n} (1+H_1(\theta_{-j}\omega,[-\varepsilon,1-\varepsilon])) \\
			&+P_1(\omega,[-1,1])\sup\limits_{\varepsilon\in [0,1]} \sum_{k=1}^{\infty}e^{-\lambda k} H_2(\theta_{-k}\omega,[-\varepsilon,1-\varepsilon])\\
			&\times \prod_{j=1}^{k-1} (1+H_1(\theta_{-j} \omega,[\varepsilon,1-\varepsilon]))+P_2(\omega,[-1,1]).
		\end{align*}
		For the first term one can use the fact that $y_0$ is tempered, and therefore $e^{-\kappa t} \norm{y_0(\theta_t \omega)}_\alpha\to 0$ for $t\to \infty$ and $\kappa>0$. To compute this $\kappa$, we note that $\log(1+ae^b)\leq a+b$, which leads to
		\begin{align*}
			\log(1+H_1(\omega,[s,t]))&\leq N_{s,t}(\omega)\widetilde{M}(1+\widetilde{N}) \\ &+ \widetilde{C}_1C_G \widetilde{N}\rho_{\gamma,[s,t]}(\X(\omega))^2(MN_{s,t}(\omega)(1+\left[X(\omega)\right]_{\gamma,[s,t]}))^{1+\widetilde{N}}.
		\end{align*}
		Using Lemma \ref{lem:GreedTime} and $(a+b)^p\leq 2^{p-1} (a^p+b^p)$ we can bound the noise terms to obtain 
		\begin{align*}
			&\rho_{\gamma,[s,t]}(\X(\omega))^2N_{s,t}(\omega)^{1+\widetilde{N}}(1+\left[X(\omega)\right]_{\gamma,[s,t]}))^{1+\widetilde{N}}\\
			&\leq 2^{4(1+\widetilde{N})}\left(\left[X(\omega)\right]_{\gamma,[s,t]}^{2}+\left[\xx(\omega)\right]^{2}_{2\gamma,\Delta_{[s,t]}}+\left[X(\omega)\right]_{\gamma,[s,t]}^{\frac{4(1+\widetilde{N})}{\gamma-\eta}}+\left[\xx(\omega)\right]^{\frac{2(1+\widetilde{N})}{\gamma-\eta}}_{2\gamma,\Delta_{[s,t]}}\right)
		\end{align*}
		Therefore we obtain
		\begin{align*}
			&\frac{1}{n}\log\left(\sup_{\varepsilon\in [0,1]}\prod_{j=1}^n (1+H_1(\theta_{-j} \omega,[-\varepsilon,1-\varepsilon]))\right)\leq \sup_{\varepsilon\in [0,1]}\frac{1}{n}\log\left(\prod_{j=1}^n (1+H_1(\theta_{-j} \omega,[-\varepsilon,1-\varepsilon]))\right)\\
			&\leq \widetilde{M}(1+\widetilde{N}) \sup_{\varepsilon\in [0,1]}\frac{1}{n}\sum_{j=1}^n N_{-\varepsilon,1-\varepsilon}(\theta_{-j}\omega)\\
			&+ 2^{4(1+\widetilde{N})}M^{1+\widetilde{N}}\widetilde{C}_1C_G\\
			&\times\sup_{\varepsilon\in [0,1]}\frac{1}{n}\sum_{j=1}^n \left(\left[X(\omega)\right]_{\gamma,[-\varepsilon,1-\varepsilon]}^{2}+\left[\xx(\omega)\right]^{2}_{2\gamma,\Delta_{[-\varepsilon,1-\varepsilon]}}+\left[X(\omega)\right]_{\gamma,[-\varepsilon,1-\varepsilon]}^{\frac{4(1+\widetilde{N})}{\gamma-\eta}}+\left[\xx(\omega)\right]^{\frac{2(1+\widetilde{N})}{\gamma-\eta}}_{2\gamma,\Delta_{[-\varepsilon,1-\varepsilon]}}\right) \\
			&\leq  \widetilde{M}(1+\widetilde{N})+\widetilde{M}(1+\widetilde{N})\chi^{-\frac{1}{\gamma-\eta}}\sup_{\varepsilon\in [0,1]}\frac{1}{n}\sum_{j=1}^n \left(\left[X(\omega)\right]_{\gamma,[-\varepsilon,1-\varepsilon]}^{\frac{1}{\gamma-\eta}}+\left[\xx(\omega)\right]^{\frac{1}{2(\gamma-\eta)}}_{2\gamma,\Delta_{[-\varepsilon,1-\varepsilon]}}\right)\\
			&+ 2^{4(1+\widetilde{N})}M^{1+\widetilde{N}}\widetilde{C}_1C_G\\
			&\times\sup_{\varepsilon\in [0,1]}\frac{1}{n}\sum_{j=1}^n \left(\left[X(\omega)\right]_{\gamma,[-\varepsilon,1-\varepsilon]}^{2}+\left[\xx(\omega)\right]^{2}_{2\gamma,\Delta_{[-\varepsilon,1-\varepsilon]}}+\left[X(\omega)\right]_{\gamma,[-\varepsilon,1-\varepsilon]}^{\frac{4(1+\widetilde{N})}{\gamma-\eta}}+\left[\xx(\omega)\right]^{\frac{2(1+\widetilde{N})}{\gamma-\eta}}_{2\gamma,\Delta_{[-\varepsilon,1-\varepsilon]}}\right).
		\end{align*}
		We take the limes superior in the previous expression. Then we obtain using the ergodic properties of the noise \eqref{ineq:ErgodicBound}
		\begin{align*}
			\limsup_{n\to \infty}\frac{1}{n}\log&\left(\sup_{\varepsilon\in [0,1]}\prod_{j=1}^n (1+H_1(\theta_{-j} \omega,[-\varepsilon,1-\varepsilon]))\right)\\
			&\leq \widetilde{M}(1+\widetilde{N})+\widetilde{M}(1+\widetilde{N})\chi^{\frac{1}{\gamma-\eta}}(K_{\frac{1}{\gamma-\eta}}+\mathbb{K}_{\frac{1}{2(\gamma-\eta)}})\\
			&+ 2^{4(1+\widetilde{N})}M^{1+\widetilde{N}}\widetilde{C}_1C_G\left(\mathbf{K}_2+K_{\frac{4(1+\widetilde{N})}{\gamma-\eta}}+\mathbb{K}_{\frac{2(1+\widetilde{N})}{\gamma-\eta}}\right)\leq c \mathbf{K}_q+c,
		\end{align*}
		with $c$ and $q$ defined in \eqref{eq:ConstantQandC}, where we used that $\chi^{\frac{1}{\gamma-\eta}}<1$. Therefore, the right-hand side of~\eqref{ineq:AssumptionOnLambda} depends only on $c$ and the moment of the Gaussian rough path $\X(\omega)$ of order $q$. Furthermore, there exists for any $\delta>0$ some $n_0\in \N_0$ such that for all $n\geq n_0$
		\begin{align}\label{eq:BoundProductH}
			\sup_{\varepsilon\in [0,1]}\prod_{j=1}^n (1+H_1(\theta_{-j} \omega,[-\varepsilon,1-\varepsilon]))\leq e^{(c  \mathbf{K}_q+c+\delta)n}.
		\end{align}
		Due to \eqref{ineq:AssumptionOnLambda} we have that $\lambda-c  \mathbf{K}_q-c-\delta>0$ for some small $\delta>0$. Then the temperedness of $y_0$, meaning that $e^{-\kappa t} \norm{y_0(\theta_t \omega)}_\alpha\to 0$ for $t\to \infty$ and $\kappa>0$, leads to
		\begin{align*}
			\norm{\phi(t,\theta_{-t}\omega,y_0(\theta_{-t}\omega))}_\alpha &\leq 1+P_1(\omega,[-1,1])\sup\limits_{\varepsilon\in [0,1]} \sum_{k=1}^{\infty}e^{-\lambda k} H_2(\theta_{-k}\omega,[-\varepsilon,1-\varepsilon])\\
			&\times \prod_{j=1}^{k-1} (1+H_1(\theta_{-j} \omega,[\varepsilon,1-\varepsilon]))+P_2(\omega,[-1,1]).
		\end{align*}
		It remains to show that the expression on the right-hand side is a tempered random variable. First, note that $P_2(\cdot,[-1,1])$ is integrable due to Lemma \ref{lem:IntegrableBound} and the fact that $\X$ is a Gaussian process. In particular, $\log(P_2(\cdot,[-1,1]))\in L^1(\Omega)$ and therefore
		\begin{align*}
			\limsup\limits_{t\to \infty} \frac{\log(P_2(\theta_{t}\omega,[-1,1]))}{t}=0,
		\end{align*}
		due to \cite[Theorem 4.1.3 i)]{Arnold}. So \eqref{eq:EquivTempered} is fulfilled, and $P_2(\cdot,[-1,1])$ is tempered. The same argument can be used to show a similar statement for $H_2(\cdot,[-\varepsilon,1-\varepsilon])$, so for every $\delta>0$ there exists a $n_0\in \N_0$ such that for $n\geq n_0$ and every $k\in \N$ we get the bound $$H_2(\theta_{-k}\omega,[-\varepsilon,1-\varepsilon])\leq e^{\delta n}.$$ 
		Together with \eqref{eq:BoundProductH} this leads to 
		\begin{align*}
			r(\omega):=\sup\limits_{\varepsilon\in [0,1]} &\sum_{k=1}^{\infty}e^{-\lambda k} H_2(\theta_{-k}\omega,[-\varepsilon,1-\varepsilon]) \prod_{j=1}^{k-1} (1+H_1(\theta_{-j} \omega,[-\varepsilon,1-\varepsilon]))\\
			&\leq \sum_{k=n_0}^\infty e^{-(\lambda-c  \mathbf{K}_q-c-\delta)n}+\sup\limits_{\varepsilon\in [0,1]} \sum_{k=1}^{n_0-1}e^{-\lambda k} H_2(\theta_{-k}\omega,[-\varepsilon,1-\varepsilon]) \\
			&\times \prod_{j=1}^{k-1} (1+H_1(\theta_{-j} \omega,[-\varepsilon,1-\varepsilon]))<\infty,
		\end{align*}
		due to \eqref{ineq:AssumptionOnLambda}, so the series is well-defined. In order to show the measurability, we recall that since $\X$ is a geometric rough path, the H\"older norms are continuous with respect to the time interval, according to Remark \ref{rem:RPContHolderNorm}. Therefore, $\varepsilon\mapsto H_i(\omega,[-\varepsilon,1-\varepsilon])$ for $i=1,2$ is continuous. In conclusion, the supremum can be taken over $[0,1]\cap \Q$ instead of $[0,1]$ entailing the measurability. Now, the temperedness of $r$ follows by similar arguments as in~\cite[Proposition 3.5]{DucHong23}. For this we need the continuity of the Hölder seminorms $[X]_{\gamma,[s,t]}$ and $[\xx]_{2\gamma,\Delta_{[0,T]}}$, compare Remark \ref{rem:RPContHolderNorm}, and that $H_i$ defined in \eqref{eq:DefH} satisfies $H_i(\theta_\tau \omega,[s,t])=H_i(\omega,[s+\tau,t+\tau])$ for $i=1,2$ and $\tau\in \R$. Further, we have
		\begin{align*}
			\limsup\limits_{\abs{t}\to \infty} \frac{\log(P_1(\theta_{t}\omega,[-1,1])r(\theta_{t}\omega))}{\abs{t}}\leq\limsup\limits_{\abs{t}\to \infty} \frac{\log(P_1(\theta_{t}\omega,[-1,1]))}{\abs{t}}+\limsup\limits_{\abs{t}\to \infty} \frac{\log(r(\theta_{t}\omega))}{\abs{t}}=0,
		\end{align*}
		which means that the temperedness of $R(\omega):=1+P_1(\omega,[-1,1])r(\omega)+P_2(\omega,[-1,1])$ follows from \cite[Theorem 4.1.3 i)]{Arnold} regarding the temperedness of $P_1 r(\cdot)$ and the one of $P_2$ showed above. \\
		In conclusion, $B(\omega):=B(0,R(\omega)+\overline{\delta})$, for some $\overline{\delta}>0$ is a random absorbing set for $\phi$ in $E_\alpha$. 
		\qed\\
	\end{proof}
	\begin{theorem}\label{thm:attractor}
		Under the Assumptions \ref{ass}, \ref{ass:Nonlin}, \ref{ass:Noise} and \eqref{ineq:AssumptionOnLambda} the random dynamical system $\phi$ associated to \eqref{eq:MainEq}, possesses a random pullback $\mathscr{D}$-attractor $\mathcal{A}(\omega)$. 
	\end{theorem}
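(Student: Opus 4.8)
The plan is to combine the absorbing set from Lemma~\ref{lem:AbsorbingSet} with the abstract criterion in Theorem~\ref{thm:ExAttractor}. Lemma~\ref{lem:AbsorbingSet} already provides, under Assumptions~\ref{ass}, \ref{ass:Nonlin}, \ref{ass:Noise} and~\eqref{ineq:AssumptionOnLambda}, a tempered random variable $R$ such that $B(\omega):=B(0,R(\omega)+\overline{\delta})\subset E_\alpha$ is random pullback $\mathscr{D}$-absorbing. The only property still missing for Theorem~\ref{thm:ExAttractor} is \emph{compactness} of the absorbing set, and the natural way to obtain it for a parabolic equation is to exploit the smoothing of the analytic semigroup together with the compact embeddings $E_{\alpha+\varepsilon}\hookrightarrow E_\alpha$ guaranteed by Assumption~\ref{ass}~1).

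Concretely, I would fix $\varepsilon\in\big(0,\min\{1-\sigma_F,\,\gamma-\sigma_G\}\big)$ --- this range is nonempty precisely because $\sigma_F<1$ and $\sigma_G<\gamma$ --- and show that the time-one map sends bounded sets of $E_\alpha$ into bounded sets of $E_{\alpha+\varepsilon}$. Writing $(y,G(y))\in\cD^{2\gamma}_{X(\omega'),\alpha}([0,1])$ for the solution of~\eqref{eq:MainEq} started at $x\in E_\alpha$, I would split
\benn
\norm{\phi(1,\omega',x)}_{\alpha+\varepsilon}\leq\norm{S_1x}_{\alpha+\varepsilon}+\norm{\int_0^1 S_{1-r}F(y_r)~\txtd r}_{\alpha+\varepsilon}+\norm{\int_0^1 S_{1-r}G(y_r)~\txtd\X_r(\omega')}_{\alpha+\varepsilon},
\eenn
and estimate: the first term by~\eqref{hg:1}; the second by~\eqref{hg:1} and the linear growth of $F$, the integral $\int_0^1(1-r)^{-(\sigma_F+\varepsilon)}~\txtd r$ being finite since $\sigma_F+\varepsilon<1$; the third by~\eqref{estimate:integral} applied to the controlled rough path $(G(y),(G(y))')$ over the base space $E_{\alpha-\sigma_G}$ with exponent $\beta=2\gamma+\sigma_G+\varepsilon<3\gamma$, followed by~\eqref{ineq:Nonlin}. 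Since $\sup_{r\in[0,1]}\norm{y_r}_\alpha$ and $\norm{y,G(y)}_{\cD^{2\gamma}_{X(\omega'),\alpha}([0,1])}$ are bounded by~\eqref{est:SolEst}, this yields
\benn
\norm{\phi(1,\omega',x)}_{\alpha+\varepsilon}\leq Q(\omega')\big(1+\norm{x}_\alpha\big),
\eenn
with $Q(\omega')$ a polynomial in $P_1(\omega',[0,1])$, $P_2(\omega',[0,1])$ and $\rho_{\gamma,[0,1]}(\X(\omega'))$; by Lemma~\ref{lem:IntegrableBound} and Gaussianity of $\X$ one has $Q\in\bigcap_{p\geq1}L^p(\Omega)$, hence $Q$ is tempered by~\cite[Theorem 4.1.3 i)]{Arnold}.

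With this, I would set $K(\omega):=\phi(1,\theta_{-1}\omega,B(\theta_{-1}\omega))$ and $\widehat{B}(\omega):=\overline{K(\omega)}^{E_\alpha}$. Evaluating the last bound at $\omega'=\theta_{-1}\omega$ and using $\norm{x}_\alpha\leq R(\theta_{-1}\omega)+\overline{\delta}$ gives $\sup_{z\in K(\omega)}\norm{z}_{\alpha+\varepsilon}\leq Q(\theta_{-1}\omega)\big(1+R(\theta_{-1}\omega)+\overline{\delta}\big)$, so $\widehat{B}$ is a bounded random set in $E_{\alpha+\varepsilon}$ with tempered radius (a product of tempered random variables being tempered). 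Compactness of $E_{\alpha+\varepsilon}\hookrightarrow E_\alpha$ then makes $\widehat{B}(\omega)$ a compact subset of $E_\alpha$, the continuous embedding gives $\widehat{B}\in\mathscr{D}$, and measurability of $\widehat{B}$ as a random set is routine from that of $R$ and joint measurability of $\phi$. Finally $\widehat{B}$ is $\mathscr{D}$-absorbing: for $D\in\mathscr{D}$, $\omega\in\Omega$ and $t\geq T_D(\theta_{-1}\omega)+1$ the cocycle property and the absorption by $B$ give
\begin{align*}
\phi(t,\theta_{-t}\omega,D(\theta_{-t}\omega))&=\phi\big(1,\theta_{-1}\omega,\phi(t-1,\theta_{-t}\omega,D(\theta_{-t}\omega))\big)\\
&\subset\phi(1,\theta_{-1}\omega,B(\theta_{-1}\omega))\subset\widehat{B}(\omega).
\end{align*}
Theorem~\ref{thm:ExAttractor} then yields the unique random pullback $\mathscr{D}$-attractor $\cA(\omega)=\bigcap_{s\geq0}\overline{\bigcup_{t\geq s}\phi(t,\theta_{-t}\omega,\widehat{B}(\theta_{-t}\omega))}$.

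I expect the genuine work to be the smoothing step of the second paragraph: it amounts to re-running the integral estimates behind Lemma~\ref{lem:IntegralEstimates} and Theorem~\ref{integral} one regularity level higher while keeping the resulting random constant integrable, hence tempered. The point that makes this possible is that the bounds in~\eqref{hg:1}, \eqref{estimate:integral}, \eqref{ineq:Nonlin} and~\eqref{est:SolEst} only involve the noise through quantities already known to lie in $\bigcap_{p}L^p(\Omega)$, and that $\sigma_F<1$, $\sigma_G<\gamma$ leave exactly enough room to raise the target space from $E_\alpha$ to $E_{\alpha+\varepsilon}$. As a byproduct this argument also shows that $\cA(\omega)$ is contained in a ball of $E_{\alpha+\varepsilon}$ with tempered radius, which is the content of Corollary~\ref{cor:reg}.
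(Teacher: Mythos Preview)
Your proposal is correct and follows essentially the same route as the paper: use Lemma~\ref{lem:AbsorbingSet} to obtain an absorbing ball $B(\omega)$ in $E_\alpha$, then exploit the parabolic smoothing~\eqref{hg:1} and~\eqref{estimate:integral} to show that applying the flow for a fixed positive time maps $B$ into a bounded set of $E_{\alpha+\varepsilon}$, $0<\varepsilon<\min\{1-\sigma_F,\gamma-\sigma_G\}$, whence compactness in $E_\alpha$ follows from the compact embedding and Theorem~\ref{thm:ExAttractor} applies. The only cosmetic difference is that you smooth for time $1$ and then verify temperedness of the new radius directly via $Q\in\bigcap_p L^p(\Omega)$, whereas the paper smooths for the (random) time $T_1=\lceil T_B\rceil$ so that the image lies inside $B(\omega)$ and inherits temperedness automatically; your variant is arguably cleaner because it avoids a random smoothing time. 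One caveat: your closing sentence overstates the byproduct, since Corollary~\ref{cor:reg} requires the strengthened spectral gap condition with $c_\beta$ in place of $c$, not merely the bound you derive here.
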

	\begin{proof}
		Since Lemma \ref{lem:AbsorbingSet} ensures the existence of an absorbing set $B(\omega)\in \mathscr{D}$ in $E_\alpha$, we need a compactness argument such that Theorem \ref{thm:ExAttractor} provides a global attractor $\mathcal{A}(\omega)$. Therefore we  define
		\begin{align*}
			K(\omega):=\overline{\phi(T_1,\theta_{-T_1}\omega,B(\theta_{-T_1}\omega))}^{E_\alpha}\subset B(\omega),
		\end{align*}
		where $T_1:=\lceil T_B \rceil$ and $T_B$ is the absorbing time of $B(\omega)$. The fact that $K(\omega)$ is indeed absorbing is a direct consequence of the cocycle property and the fact that $B(\omega)$ is an absorbing set. The proof of the compactness is now based on the compact embedding $E_{\alpha+\beta}\hookrightarrow E_\alpha$ for $0<\beta<\min\{1-\sigma_F,\gamma-\sigma_G\}$. 
		Let $y_0(\omega)\in B(\omega)$
		and observe that
		\begin{align*}
			\norm{\phi(T_1,\theta_{- T_1}\omega, y_0(\theta_{-T_1}\omega))}_{\alpha+\beta}&\leq \norm{S_{T_1}y_0(\theta_{-T_1}\omega)}_{\alpha+\beta}\\
			&+\norm{\int_0^{T_1} S_{T_1-r}F(\phi(r,\theta_{- T_1}\omega, y_0(\theta_{-T_1}\omega)))~\txtd r}_{\alpha+\beta}\\
			&+\norm{\int_0^{T_1} S_{T_1-r}G(\phi(r,\theta_{- T_1}\omega, y_0(\theta_{-T_1}\omega)))~\txtd \X_r(\theta_{-T_1}\omega)}_{\alpha+\beta}.
		\end{align*}
		The first term can be bounded by using the fact that $y_0(\theta_{-T_1}\omega)\in B(\theta_{-T_1}\omega)=B(0,R(\theta_{-T_1}\omega)+\overline{\delta})$ and \eqref{hg:1} 
		\begin{align*} 
			\norm{S_{T_1}y_0(\theta_{-T_1}\omega)}_{\alpha+\beta}\leq C_{-\beta}e^{-\lambda_A T_1} T_1^{-\beta}\norm{y_0(\theta_{-T_1}\omega)}_\alpha\leq C_{-\beta}e^{-\lambda_A T_1} T_1^{-\beta} (R(\theta_{-T_1}\omega)+\overline{\delta})< \infty.
		\end{align*}
		For the other two terms, we use that $B(\omega)$ is an absorbing set with absorbing time $T_B\leq T_1\in \N$, therefore the drift term can be estimated as 
		\begin{align*}
			&\norm{\int_0^{T_1} S_{T_1-r}F(\phi(r,\theta_{- T_1}\omega, y_0(\theta_{-T_1}\omega)))~\txtd r}_{\alpha+\beta}\\&\leq C_{-\beta-\sigma_F}\int_0^{T_1} e^{-\lambda_A(T_1-r)}(T_1-r)^{-\beta-\sigma_F}\norm{F(\phi(r,\theta_{- T_1}\omega, y_0(\theta_{-T_1}\omega)))}_{\alpha-\sigma_F}~\txtd r\\
			&\leq C_FC_{-\beta-\sigma_F}\int_0^{T_1} e^{-\lambda_A(T_1-r)}(T_1-r)^{-\beta-\sigma_F}\norm{\phi(r,\theta_{- T_1}\omega, y_0(\theta_{-T_1}\omega))}_{\alpha}~\txtd r\\
			&\leq C_FC_{-\beta-\sigma_F}K(\omega)\int_0^{T_1} e^{-\lambda_A(T_1-r)}(T_1-r)^{-\beta-\sigma_F}~\txtd r<\infty,
		\end{align*}
		where the integral is finite due to $\beta+\sigma_F<1$. For the rough integral we combine \eqref{estimate:integral}, \eqref{ineq:Nonlin} and \eqref{ineq:VarRiedelEst} 
		\begin{align*}
			&\norm{\int_0^{T_1} S_{T_1-r}G(\phi(r,\theta_{- T_1}\omega, y_0(\theta_{-T_1}\omega)))~\txtd \X_r(\theta_{-T_1}\omega)}_{\alpha+\beta}\\
			&\leq C_I C_G \rho_{\gamma,[0,T_1]}(\X(\theta_{-T_1}\omega))^2 T_1^{-\beta+\gamma-\sigma_G} \\
			&\times(1+\norm{\phi(\cdot,\theta_{- T_1}\omega, y_0(\theta_{-T_1}\omega)),G(\phi(\cdot,\theta_{- T_1}\omega, y_0(\theta_{-T_1}\omega)))}_{\cD^{2\gamma}_{X,\alpha-\sigma_G}([0,T_1])})\\
			&\leq C_I C_G \rho_{\gamma,[0,T_1]}(\X(\theta_{-T_1}\omega))^2 T_1^{-\beta+\gamma-\sigma_G}\\
			&\times(1+\norm{\phi(T_1,\theta_{- T_1}\omega, y_0(\theta_{-T_1}\omega))}_\alpha) P_1(\theta_{-T_1}\omega,[T_1,T_1+1])+ P_2(\theta_{-T_1}\omega,[T_1,T_1+1])\\
			&\leq C_I C_G \rho_{\gamma,[0,T_1]}(\X(\theta_{-T_1}\omega))^2 T_1^{-\beta+\gamma-\sigma_G}(1+R(\omega))P_1(\theta_{-T_1}\omega,[T_1,T_1+1])+ P_2(\theta_{-T_1}\omega,[T_1,T_1+1])\\
			&< \infty,
		\end{align*}
		since $\beta+\sigma_G<\gamma$. This shows that
		\begin{align*}
			\norm{\phi(T_1,\theta_{- T_1}\omega, y_0(\theta_{-T_1}\omega))}_{\alpha+\beta}\leq \infty,
		\end{align*}
		for arbitrary $y_0(\omega)\in B(\omega)$, which leads to $K(\omega)\in E_{\alpha+\beta}$. Therefore $K(\omega)$ is a compact absorbing set in $E_{\alpha}$. 
		\qed
	\end{proof}
	\begin{remark}
		\begin{itemize}
			\item [i)]
			Condition~\eqref{ineq:AssumptionOnLambda} means that an attractor of \eqref{eq:MainEq} exists if the nonlinearities are sufficiently small in comparison to the spectral bound of the operator $A$. However this criterion is more flexible than the one derived in~\cite{LinYangZeng23}, where it is assumed that $\max\{L_G, \|G(0)\|_{\alpha-\sigma}, \|DG(0)G(0)\|_{\alpha-\gamma-\sigma}\}<1$. Here $L_G$ incorporates the Lipschitz constants of $G$, $DG$ and $D^2G$. In contrast,  our technique also offers the possibility of taking $C_G>1$ by selecting a sufficiently large $\lambda_A$. 
			A similar condition to~\eqref{ineq:AssumptionOnLambda} was obtained in \cite{DucHong23} for Young differential equations.
			We note that we have to control higher moments of the noise term $\X$, which is possible since $\mathbf{X}$ is assumed to be a Gaussian rough path. 
			\item[ii)] If the noise is additive, the existence of a random dynamical system can be established, transforming the SPDE into a random PDE using the stationary Ornstein-Uhlenbeck process. In this case, the condition for the existence of the attractor \eqref{ineq:AssumptionOnLambda} simplifies to
			\begin{align*}
				\lambda_A>2(C_{-\sigma_F} C_F\Gamma(1-\sigma_F))^{\frac{1}{1-\sigma_F}},
			\end{align*}
			which is consistent with other results for additive noise and the assumption $F:E_\alpha\to E_\alpha$ (consequently $\sigma_F=0)$, compare \cite[Assumption 2]{KuhnNeamtuSonner21}.
		\end{itemize}
	\end{remark}
	Going back to our setting, a major advantage of this method is that it directly allows us to investigate the regularity of the random attractor.
	
	\begin{corollary}\label{cor:reg} 
		Under the Assumptions \ref{ass}, \ref{ass:Nonlin} and \ref{ass:Noise} let $0<\beta<\min\{1-\sigma_F,\gamma-\sigma_G\}$ and  assume further 
		\begin{align*}
			\lambda_A-2(C_{-\sigma_F-\beta}C_F\Gamma(1-\sigma_F-\beta))^{\frac{1}{1-\sigma_F-\beta}}&>c_\beta (\mathbf{K}_q+1) ,
		\end{align*}
		with $\widetilde{C}_{1,\beta}=\max\{C_I,C_{-\beta}C_I\}e^{\lambda_A}\min\left\{\widetilde{L}, \frac{M_{1-\sigma_F-\beta}}{2}\right\}$ and 
		\begin{align*}
			c_\beta &:=C(\widetilde{N})\max\{\widetilde{M},\widetilde{C}_{1,\beta}C_G\}.
		\end{align*} Then the random pullback $\mathscr{D}$-attractor $\mathcal{A}$ obtained in Theorem \ref{thm:attractor} also belongs to $E_{\alpha+\beta}$. 
	\end{corollary}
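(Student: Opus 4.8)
The plan is to rerun the absorbing-set construction of Section~\ref{sec:main}---Lemmas~\ref{lem:IntegralEstimates}, \ref{lem:SolEst}, \ref{lem:discreteEst} and \ref{lem:AbsorbingSet}---verbatim, but estimating the $E_{\alpha+\beta}$-norm of the solution in place of the $E_\alpha$-norm, and then to combine the resulting bound with $\phi$-invariance of $\mathcal{A}$. The point is that the extra $\beta$ units of spatial regularity come \emph{only} from parabolic smoothing~\eqref{hg:1}: we never evaluate $F$ or $G$ on a scale above $E_\alpha$, we merely use that $S_{t-r}$ maps $E_{\alpha-\sigma_F}\to E_{\alpha+\beta}$ for the drift and that the one-step rough convolution in~\eqref{estimate:integral} lands in $E_{\alpha+\beta}$ for the noise term. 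This is admissible precisely because $\sigma_F+\beta<1$, which keeps the singular kernel $(t-r)^{-\sigma_F-\beta}$ integrable, and $\sigma_G+\beta<\gamma$, which keeps the time exponent $3\gamma-(2\gamma+\sigma_G+\beta)=\gamma-\sigma_G-\beta$ in~\eqref{estimate:integral} positive. Since only the behavior of $\phi(t,\theta_{-t}\omega,\cdot)$ as $t\to\infty$ is relevant and $\phi(1,\omega,\cdot)$ maps $E_\alpha$ into $E_{\alpha+\beta}$ by~\eqref{hg:1}, we may assume throughout that the initial datum already lies in $E_{\alpha+\beta}$, which avoids a spurious left-endpoint singularity below.

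First I would redo Lemma~\ref{lem:IntegralEstimates} with target norm $E_{\alpha+\beta}$. For the rough integral, splitting over unit sub-intervals as before and computing each one-step rough convolution \emph{directly} in $E_{\alpha+\beta}$ through~\eqref{estimate:integral} (time factor $\leq 1$, legitimate by $\sigma_G+\beta<\gamma$), the outer semigroup $S_{t-l-1}$ then acts merely as a bounded operator on $E_{\alpha+\beta}$ with no additional singular factor, and with~\eqref{ineq:Nonlin} one arrives at
\[
\Big\|\int_0^t S_{t-r}G(y_r)\,\txtd\X_r(\omega)\Big\|_{\alpha+\beta}\leq C_G\, C_{1,\beta}\sum_{l=0}^{\lfloor t\rfloor} e^{-\lambda_A(t-l-1)}\,P_3(\omega,[l,l+1]),
\]
with $C_{1,\beta}$ the analogue of $C_1$ with the constant of~\eqref{estimate:integral} evaluated at the shifted exponent. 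For the drift, using $\|F(y_r)\|_{\alpha-\sigma_F}\leq C_F(1+\|y_r\|_\alpha)$ and~\eqref{hg:1} with $\sigma=\sigma_F+\beta$,
\[
\Big\|\int_0^t S_{t-r}F(y_r)\,\txtd r\Big\|_{\alpha+\beta}\leq C_{-\sigma_F-\beta}C_F\int_0^t e^{-\lambda_A(t-r)}(t-r)^{-\sigma_F-\beta}\|y_r\|_\alpha\,\txtd r + C_{2,\beta},
\]
with $C_{2,\beta}:=C_{-\sigma_F-\beta}C_F\lambda_A^{\sigma_F+\beta-1}\Gamma(1-\sigma_F-\beta)$.

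Next I would repeat Lemma~\ref{lem:SolEst} line by line with $\alpha$ replaced by $\alpha+\beta$. Multiplying~\eqref{eq:mildSolution} by $e^{\lambda_A t}$, inserting $\|S_t y_0\|_{\alpha+\beta}\leq C_A e^{-\lambda_A t}\|y_0\|_{\alpha+\beta}$ together with the two bounds above and the embedding $\|y_r\|_\alpha\lesssim\|y_r\|_{\alpha+\beta}$, and applying the singular Henry--Gronwall inequality~\eqref{ineq:singularGronwall2} to $v(r):=\|y_r\|_{\alpha+\beta}e^{\lambda_A r}$---now with exponent $1-\sigma_F-\beta$ and constant $C_{-\sigma_F-\beta}C_F$, the Mittag--Leffler bound~\eqref{eq:MittagLefflerBound} being used exactly as before---produces an analogue of~\eqref{ineq:AprioriEst} in which the decay rate $\lambda=\lambda_A-L$ is replaced by $\lambda_A-L_\beta$, $L_\beta:=2(C_{-\sigma_F-\beta}C_F\Gamma(1-\sigma_F-\beta))^{1/(1-\sigma_F-\beta)}$, and $\widetilde C_1$ by the constant $\widetilde C_{1,\beta}$ appearing in the statement. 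Feeding this into Lemma~\ref{lem:discreteEst} (whose discrete-Gronwall step is unchanged, since $P_3$ and the solution estimate~\eqref{est:SolEst} still refer to the base regularity $\alpha$, already under control) and then into the ergodic-averaging argument of Lemma~\ref{lem:AbsorbingSet}, the exponential growth rate of the products $\prod_j(1+H_{1,\beta}(\theta_{-j}\omega,\cdot))$---with $H_{1,\beta},H_{2,\beta}$ the $\beta$-versions of~\eqref{eq:DefH}---is bounded by $c_\beta\mathbf{K}_q+c_\beta$, the exponents $\widetilde N$ and $q$ being unchanged since they originate from Theorem~\ref{thm:VarRiedelEst} applied in $E_\alpha$. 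The hypothesis $\lambda_A-L_\beta>c_\beta(\mathbf{K}_q+1)$ makes $\lambda_A-L_\beta-c_\beta\mathbf{K}_q-c_\beta>0$, hence the series $\sum_{k\geq1}e^{-(\lambda_A-L_\beta)k}H_{2,\beta}(\theta_{-k}\omega,\cdot)\prod_{j<k}(1+H_{1,\beta}(\theta_{-j}\omega,\cdot))$ converges to a tempered random variable, and exactly as in Lemma~\ref{lem:AbsorbingSet} one obtains a tempered $R_\beta(\omega)$ with $\limsup_{t\to\infty}\|\phi(t,\theta_{-t}\omega,y_0(\theta_{-t}\omega))\|_{\alpha+\beta}\leq R_\beta(\omega)$ for every $D\in\mathscr{D}$ and $y_0(\theta_{-t}\omega)\in D(\theta_{-t}\omega)$.

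It remains to transfer this to $\mathcal{A}$. Since $\mathcal{A}\in\mathscr{D}$ by Definition~\ref{def:attractor}, we may take $D=\mathcal{A}$. For $x\in\mathcal{A}(\omega)$, $\phi$-invariance yields $x=\phi(t,\theta_{-t}\omega,x_t)$ with $x_t\in\mathcal{A}(\theta_{-t}\omega)$ for every $t\geq0$; as the left-hand side does not depend on $t$, taking $t\to\infty$ in the bound above gives $\|x\|_{\alpha+\beta}\leq R_\beta(\omega)$. Thus $\mathcal{A}(\omega)\subset\overline{B_{E_{\alpha+\beta}}(0,R_\beta(\omega))}$ for every $\omega$, i.e. $\mathcal{A}$ is a tempered random set in $E_{\alpha+\beta}$, which proves the claim. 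The main obstacle is not conceptual but one of constant-bookkeeping: one must check that the $\beta$-shift propagates through Section~\ref{sec:main} exactly so that the singular Henry--Gronwall step delivers precisely the rate $L_\beta$ and the constant $\widetilde C_{1,\beta}$---hence exactly the $c_\beta$ of the statement---and that all singular factors survive, which is guaranteed by the standing assumption $\beta<\min\{1-\sigma_F,\gamma-\sigma_G\}$.
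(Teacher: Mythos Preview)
Your proposal is correct and follows essentially the same route as the paper: both rerun the chain Lemma~\ref{lem:IntegralEstimates}\,$\to$\,Lemma~\ref{lem:SolEst}\,$\to$\,Lemma~\ref{lem:discreteEst}\,$\to$\,Lemma~\ref{lem:AbsorbingSet} with target norm $E_{\alpha+\beta}$, using parabolic smoothing~\eqref{hg:1} to gain the extra $\beta$ units while keeping $F,G$ evaluated on the $E_\alpha$-scale. The two differences are cosmetic. First, the paper keeps $y_0\in E_\alpha$ and writes $\|S_t y_0\|_{\alpha+\beta}\leq C_{-\beta}e^{-\lambda_A t}t^{-\beta}\|y_0\|_\alpha$, carrying the factor $t^{-\beta}$ through the argument, whereas you preemptively smooth the initial datum into $E_{\alpha+\beta}$ via $\phi(1,\omega,\cdot)$; your choice sidesteps the left-endpoint singularity in the Henry--Gronwall step (which strictly speaking requires $h\in L^\infty_{\rm loc}$), at the cost of having to note that $\omega\mapsto\|\phi(1,\theta_{-1}\omega,y_0(\theta_{-1}\omega))\|_{\alpha+\beta}$ is again tempered---this follows from the bounds in the proof of Theorem~\ref{thm:attractor}. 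Second, the paper phrases the conclusion as ``one obtains an absorbing set in $E_{\alpha+\beta}$, compactness as before'', while you draw the conclusion directly from $\phi$-invariance of $\mathcal{A}$; but once an $E_{\alpha+\beta}$-absorbing ball $B_\beta(\omega)$ exists, applying it to $D=\mathcal{A}\in\mathscr{D}$ together with $\phi(t,\theta_{-t}\omega,\mathcal{A}(\theta_{-t}\omega))=\mathcal{A}(\omega)$ immediately gives $\mathcal{A}(\omega)\subset B_\beta(\omega)$, so the two endings are the same argument.
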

	\begin{proof}
		Let $y_0(\omega)\in D(\omega)$ where $D(\omega)\subset \mathscr{D}$ is a tempered set in $E_\alpha$ as in Lemma~\ref{lem:AbsorbingSet}. We need to estimate $\phi(t,\theta_{-t}\omega,y_0(\theta_{-t}\omega))$ in the $E_{\alpha+\beta}$-norm, similar as in Lemma \ref{lem:AbsorbingSet}. The only estimate, that we need to change, are the ones in Lemma \ref{lem:discreteEst}. Based on the computations in Lemma \ref{lem:IntegralEstimates} we observe that for $0<\beta<\min\{1-\sigma_F,\gamma-\sigma_G\}$ we can improve the estimates for the integral terms, exploiting the smoothing property \eqref{hg:1}
		\begin{align}\label{eq:ImprovedEstimate}
			\begin{split}
				\norm{\int_0^t S_{t-r}G(y_r)~\txtd \X_r(\omega)}_{\alpha+\beta}&\leq C_GC_{1} \sum_{l=0}^{\lfloor t \rfloor}e^{-\lambda_A(t-l-1)}P_3(\omega,[l,l+1]),\\
				\norm{\int_0^t S_{t-r}F(y_r)~\txtd r}_{\alpha+\beta}&\leq C_{-\sigma_F-\beta} C_F\int_0^t e^{-\lambda_A(t-r)} (t-r)^{-\sigma_F-\beta}\norm{y_r}_{\alpha}~\txtd r+C_{2,\beta},
			\end{split}
		\end{align}
		for all $t\geq 0$ and $C_{2,\beta}:=C_{-\sigma_F-\beta}C_F \lambda_A^{\sigma_F+\beta-1} \Gamma(1-\sigma_F-\beta)$. Further, we get $\|S_t y_0(\omega)\|_{\alpha+\beta} \leq C_{-\beta}e^{\lambda_A t} t^{-\beta} \norm{y_0(\omega)}_\alpha$ and combining this with \eqref{eq:ImprovedEstimate} one obtains similar as in Lemma \ref{lem:AbsorbingSet} an absorbing set. The compactness is shown in the same way as in Theorem \ref{thm:attractor}.
		\qed\\
	\end{proof}

	\section{Applications}\label{sec:app}
	We provide examples for the nonlinear term $G$ and indicate how the condition~\eqref{ineq:AssumptionOnLambda} on the existence of random attractors can be verified in concrete applications. Since the conditions on $F$ are less restrictive than those on $G$, we can consider in both examples a global Lipschitz nonlinearity $F$. Due to the specific form of the condition \eqref{ineq:AssumptionOnLambda}, the resulting constant $C_F$ can be compensated by $A$ or $G$. Therefore, we focus on $G$ and $A$. We further recall that according to Remark \ref{rem:RelaxAssump} the Assumption \ref{ass:Nonlin} on $G$ can be weakened, therefore it is enough to verify the assumptions specified in Remark \ref{rem:RelaxAssump}.
	\subsection{PDEs with multiplicative rough boundary noise}
	As in~\cite{NS23}, we let $\cO\subset \R^d$ be a bounded domain with $C^\infty$-boundary and consider the semilinear parabolic evolution equation with multiplicative rough boundary noise in $E:=L^p(\cO)$, for $1<p<\infty$, given by
	\begin{align}\label{eq:BoundNoise}
		\begin{cases}
			\frac{\partial}{\partial t} y = \cA y & \text{ in } \cO,\\
			\cC y = G(y)~\frac{\txtd}{\txtd t} \X & \text{ on } \partial \cO,\\
			y(0)=y_0.
		\end{cases}
	\end{align}
	Similar problems were treated in~\cite{VeraarSchnaubelt}, where the boundary noise is a Brownian motion and in~\cite{DuncanMaslowski}, where  additive fractional noise was considered. 
	Here $\X$ is a $\gamma$-H\"older rough path cocycle which satisfies Assumption \ref{ass:Noise} with $\gamma\in(\frac{1}{3},\frac{1}{2}]$, for example, the rough path lift of a fractional Brownian motion. Furthermore, $\cA$ is a formal second order differential operator in divergence form with Neumann boundary conditions $\cC$ given by 
	\begin{align}\label{formalOp}
		\cA u:= \sum_{i,j=1}^d \partial_i \left(a_{ij}\partial_j \right)u -\lambda_A u,\quad \cC u:= \sum_{i,j=1}^d \nu_i\gamma_\partial a_{ij}\partial _ju,
	\end{align}
	where the coefficients $a_{ij}:\overline{\cO}\to \R$ are smooth, $\lambda_A>0$ a constant, $(a_{ij})_{i,j=1}^d$ is symmetric and uniform elliptic, meaning that there exists some constant $k>0$ such that for all $\xi \in \R^d$ and $x\in \overline{\cO}$ we have
	\begin{align*}
		\sum_{i,j=1}^d a_{ij}(x)\xi_i \xi_j\geq k \abs{\xi}^2.
	\end{align*}
	Let $A:D(A)\subset E \to E$ be the $E$-realization of $(\cA,\cC)$ with $D(A):=\{u\in H^{2,p}(\cO)~:~\cC u=0\}$ and $(E_\alpha)_{\alpha\in \R}$ the respective fractional power scale. Since we consider a boundary value problem, we need also a second scale for the boundary data $\widetilde{E}_\alpha:=H^{\alpha,p}(\partial \cO)$. 
	Further, we introduce the Neumann operator $N$, which is the solution operator to 
	\begin{align*}
		\cA u&=0\ \text{in}\ \cO,\\
		\cC u &=g \ \text{on}\ \partial\cO,
	\end{align*}
	and satisfies $N\in \cL(\widetilde{E}_\alpha;E_{\varepsilon})$ for some $\varepsilon<\frac{1}{2}+\frac{1}{2p}$. In \cite{NS23} it was shown one can transform \eqref{eq:BoundNoise} in a semilinear problem without boundary noise. This reads as
	
	\begin{align}\label{eq:EquivBoundNoise}
		\begin{cases}
			\txtd y = Ay~\txtd t + A_{-\theta-2\gamma}NG(y)~\txtd \X_t,\\
			y(0)=y_0\in E_{-\theta},
		\end{cases}
	\end{align}
	where $\theta:=1-\varepsilon$ and $A_{-\theta-2\gamma}\in \cL(E_{1-\theta-2\gamma};E_{-\theta-2\gamma})$ is an extrapolation operator, see \cite[Chapter V]{Amann95}. Note, that in \cite{NS23} the equivalence of~\eqref{eq:BoundNoise} and~\eqref{eq:EquivBoundNoise} was proved for $A_{-\theta-\gamma}$ instead of $A_{-\theta-2\gamma}$.
	Since $A_{-\theta-\gamma}\subset A_{-\theta-2\gamma}$,
	we work now with $ A_{-\theta-2\gamma}$. This is due to the fact that $A_{-\theta-\gamma}NG$ does not satisfy Assumption \ref{ass:Nonlin}, since $A_{-\theta-\gamma}NG(y)$ is not well-defined for $y\in E_{-\eta-2\gamma}$. 
	\begin{remark}
		For a better comprehension, we recall here that extrapolation operators are necessary since
		$Ny$ does not belong to $D(A)$. Due to this reason, we need an extension of $A$ called extrapolation operator $A_{-\iota}$, for a suitable choice of $\iota$. This means that  $A_{-\iota}Ny$ is well-defined.
	\end{remark}

	\begin{theorem}
		Let $\lambda_A>0$ be large enough such that \eqref{ineq:AssumptionOnLambda} holds. We further assume that there exists a $\sigma>\theta+1+\frac{1}{p}$ such that for any $\vartheta \in \{0,\gamma, 2\gamma \}$ the diffusion term $G:E_{-\theta-\vartheta}\to \widetilde{E}_{-\theta-\vartheta+\sigma}$ is three times continuously Fr\'{e}chet differentiable with bounded derivatives and that the derivative of 
		\begin{align*}
			DF(\cdot) \circ A_{-\sigma} N G(\cdot):\cB_{-\theta-\gamma}\to \widetilde{\cB}_{-\theta-\gamma+\sigma}
		\end{align*}
		is bounded, $A$ has a compact resolvent and the principal part, i.e. $\widetilde{A}:=\sum_{i,j=1}^d \partial_i \left(a_{ij}\partial_j \right)$ is dissipative and there exists a constant $a_0>0$ such that $\widetilde{A}-a_0$ is surjective. Then there exists a random dynamical system for \eqref{eq:EquivBoundNoise}on $E_{-\theta}$ which possesses a global random $\mathscr{D}$-attractor.
	\end{theorem}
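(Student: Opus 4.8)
The plan is to \emph{recognise~\eqref{eq:EquivBoundNoise} as a particular instance of the abstract equation~\eqref{eq:MainEq}} and then to invoke Theorem~\ref{thm:attractor}, using \cite[Theorem~3.12]{HN22} for the generation of the random dynamical system and \cite{NS23} for the equivalence of~\eqref{eq:BoundNoise} and~\eqref{eq:EquivBoundNoise}. Concretely, we take $E=L^p(\cO)$, the generator to be $A$, the $E$-realisation of $(\cA,\cC)$, the base space to be $E_\alpha$ with $\alpha=-\theta$, the drift $F$ to be a globally Lipschitz nonlinearity (as in the opening of Section~\ref{sec:app}), and the abstract diffusion coefficient to be $G_{\mathrm{abs}}:=A_{-\theta-2\gamma}NG$ on the associated fractional power scale. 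Since $\X$ is assumed to satisfy Assumption~\ref{ass:Noise} and the spectral-gap inequality~\eqref{ineq:AssumptionOnLambda} is assumed outright, it only remains to verify Assumptions~\ref{ass} and~\ref{ass:Nonlin}, the latter in the relaxed form of Remark~\ref{rem:RelaxAssump}; everything else is then quoted from Section~\ref{sec:main}.

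First I would check Assumption~\ref{ass}. Writing $\cA=\widetilde A-\lambda_A$ with $\widetilde A=\sum_{i,j}\partial_i(a_{ij}\partial_j)$ the principal part, the dissipativity of $\widetilde A$ together with the surjectivity of $\widetilde A-a_0$ yields, via the Lumer--Phillips theorem, that $\widetilde A$ generates a contraction semigroup on $L^p(\cO)$; hence $A$ generates $S_t=e^{-\lambda_A t}\widetilde S_t$, which is exponentially stable with $C_A=1$ and $\tilde\lambda_A=\lambda_A$. Uniform ellipticity, smoothness of the coefficients $a_{ij}$ and of $\partial\cO$, and the Neumann boundary condition make $A$ sectorial, so the semigroup is analytic; the compact resolvent hypothesis then promotes it to a compact semigroup, which gives the compact embeddings $E_\beta\hookrightarrow E_\alpha$ used in the compactness step of Theorem~\ref{thm:attractor}. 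Bounded imaginary powers of such a smooth-coefficient uniformly elliptic operator under Neumann conditions are classical, cf.~\cite{Amann95}. This establishes Assumption~\ref{ass}.

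The core of the argument is the verification of Assumption~\ref{ass:Nonlin} in the relaxed form of Remark~\ref{rem:RelaxAssump} for $G_{\mathrm{abs}}=A_{-\theta-2\gamma}NG$ (Assumption~\ref{ass:Nonlin}~1) being immediate from the global Lipschitz property of $F$). One composes three maps: the boundary nonlinearity $G\colon E_{-\theta-\vartheta}\to\widetilde E_{-\theta-\vartheta+\sigma}$, which by hypothesis is bounded and three times continuously Fr\'{e}chet differentiable with bounded derivatives for $\vartheta\in\{0,\gamma,2\gamma\}$; the Neumann operator $N\in\cL(\widetilde E_\alpha;E_\varepsilon)$ with $\varepsilon<\tfrac12+\tfrac1{2p}$, recorded in~\cite{NS23}; and the extrapolation operator $A_{-\theta-2\gamma}$, which moves one step down the power scale. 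Tracking these exponents as in~\cite{NS23}, the hypothesis $\sigma>\theta+1+\tfrac1p$ is exactly what forces $G_{\mathrm{abs}}$ to map $E_{-\theta-\vartheta}$ boundedly into $E_{-\theta-\vartheta-\sigma_G}$ for some $\sigma_G\in[0,\gamma)$ and all relevant $\vartheta$, with all three Fr\'{e}chet derivatives bounded (inherited from those of $G$); and the additional boundedness hypothesis in the statement, involving the composition built from $A_{-\sigma}NG$, is precisely the substitute --- available by~\cite{HN22} --- for the condition on $DG_{\mathrm{abs}}(\cdot)\circ G_{\mathrm{abs}}(\cdot)$ required in Remark~\ref{rem:RelaxAssump}. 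I expect this regularity bookkeeping --- pushing all three derivatives through the two singular objects $N$ and $A_{-\theta-2\gamma}$ while keeping $\sigma_G<\gamma$ uniformly over $\vartheta\in[0,2\gamma]$ --- to be the main obstacle. With Assumptions~\ref{ass}, \ref{ass:Nonlin} and~\ref{ass:Noise} in force, \cite[Theorem~3.12]{HN22} produces a continuous random dynamical system on $E_{-\theta}$, and, \eqref{ineq:AssumptionOnLambda} being assumed for $\lambda_A$ large enough, Theorem~\ref{thm:attractor} supplies the random pullback $\mathscr D$-attractor $\mathcal A(\omega)$, which via the equivalence established in~\cite{NS23} is the asserted attractor for~\eqref{eq:BoundNoise}.
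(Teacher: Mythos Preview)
Your proposal is correct and follows essentially the same route as the paper: verify Assumption~\ref{ass} for $A$ via Lumer--Phillips applied to the principal part $\widetilde A$ together with the compact-resolvent hypothesis, verify the relaxed diffusion assumption of Remark~\ref{rem:RelaxAssump} for $A_{-\theta-2\gamma}NG$ by deferring the regularity bookkeeping to~\cite{NS23}, and then invoke Theorem~\ref{thm:attractor}. The only minor deviations are that the paper obtains $\sigma_G=0$ rather than merely $\sigma_G\in[0,\gamma)$, and cites \cite[Theorems~3.20,~4.3]{NS23} (the boundary-noise-specific result) rather than \cite[Theorem~3.12]{HN22} for the generation of the random dynamical system.
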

	\begin{proof}
		As in \cite{NS23} it can be shown that $A_{-\theta-2\gamma}NG(y)$ satisfies the assumptions stated in Remark \ref{rem:RelaxAssump} with $\sigma_G=0$. Consequently, there exists a global-in-time solution which generates a random dynamical system \cite[Theorem 3.20, 4.3]{NS23} on $E_{-\theta}$.  
		
		It is known that the operator $A$ satisfies Assumption \ref{ass} i). Due to the maximal dissipativity assumption, Lumer-Phillips Theorem yields the existence of an analytic semigroup $(\widetilde{S}_t)_{t\geq 0}$ of contractions for the principal part $\widetilde{A}$. 
		Then the semigroup generated by $A$ is given by $S_t=e^{-\lambda_A t}\widetilde{S}_t$ and is therefore exponential stable with parameter $\lambda_A$, since $(\widetilde{S}_t)_{t\geq 0}$ is contractive. The compactness of the semigroup, follows from the fact that $A$ has a compact resolvent, \cite[V.1.2.1]{Amann95}. The existence of the global attractor is then ensured by Theorem \ref{thm:attractor}.
		\qed\\
	\end{proof}

	For the sake of completeness, we conclude with an example in the Young regime, i.e.~$\gamma\in (\frac{1}{2},1)$.
	
	\begin{example}
		If the noise $X\in C^{\widetilde{\gamma}}$ is Hölder continuous with parameter $\widetilde{\gamma}>\frac{1}{2}$, we can use the Young integral instead of the rough one. The Young integral can be defined for a path $y\in\mathfrak{C}:= C(\cB_\alpha)\cap C^{\widetilde{\gamma}}(\cB_{\alpha-\widetilde{\gamma}})$ as
		\begin{align*}
			\int_s^t S_{t-r} y_r~\txtd X_r =\lim\limits_{|\cP|\to 0} \sum\limits_{[u,v]\in\cP} S_{t-u} y_u X_{v,u}, 
		\end{align*}
		whereas~\eqref{estimate:integral} reads as
		\begin{align*}
			\Big\|\int\limits_{s}^{t} S_{t-r} y_{r}~\txtd X_{r} - S_{t-s}y_{s}X_{t,s}\Big\| _{\alpha-\widetilde{\gamma}+\beta} \lesssim \|y\|_{\mathfrak{C}} [X]_{\widetilde{\gamma}}  (t-s)^{2\widetilde{\gamma}-\beta},
		\end{align*}
		for $\beta<2\widetilde{\gamma}$. In order to solve~\eqref{eq:MainEq}, the nonlinear term $G$ needs to be only two times Fréchet differentiable. The
		existence thery of random attractors developed in this work carries over to the Young case as well, modifying~\eqref{ineq:AssumptionOnLambda} accordingly. More precisely, the constant $c$ differs and $\mathbf{K}_q = K_q$. 
		As a particular application of Theorem~\ref{thm:attractor} we obtain a random pullback $\mathscr{D}$-attractor for~\eqref{eq:BoundNoise} with Dirichlet boundary noise instead of Neumann. The global well-posedness of~\eqref{eq:BoundNoise} with Dirichlet boundary noise in the Young regime was established in \cite[Theorem 3.24]{NS23}.
	\end{example}

	\subsection{Parabolic PDEs with multiplicative rough noise} 
	\begin{example}\label{ex:Nonlin1}
		We consider the  parabolic PDE on $E:=L^p(\cO)$ for $1<p<\infty$ given by 
		\begin{align}\label{eq:ParabolEq}
			\begin{cases}
				\txtd y = A y~\txtd t + G(y)~\txtd \X_t, \\
				y(0)= y_0\in E_{\alpha},
			\end{cases}
		\end{align}
		where $A$ is a second order operator in $E$ which fulfills Assumption \ref{ass} with $\lambda_A>0$. For example, $A:=\Delta_D-\lambda_A$ where $\Delta_D$ denotes the Dirichlet-Laplacian and $\lambda_A>0$. The corresponding scale is  given by 
		\begin{align*}
			E_{\beta}:=
			\begin{cases}
				H^{2\beta,p}(\cO),     & 0\leq \beta<\frac{1}{2p} \\
				H^{2\beta,p}_0(\cO),    & \frac{1}{2p}<\beta\leq 1, \beta\neq \frac{p+1}{2p}.
			\end{cases}
		\end{align*}
		We define for some $\alpha\in \R$, $\sigma<\gamma$ and $\vartheta\in \{0,\gamma,2\gamma\}$ the mapping
		\begin{align}\label{linearG}
			G:E_{\alpha-\vartheta}\to E_{\alpha-\vartheta-\sigma},u\mapsto (-\Delta_D)^\sigma u.
		\end{align}
	\end{example}
	\begin{theorem}
		Let $\lambda_A$ be large enough such that \eqref{ineq:AssumptionOnLambda} is satisfied.~Then there exists a random dynamical system $\phi$ on $E_\alpha$ for \eqref{eq:ParabolEq} that possesses a  random pullback $\mathscr{D}$-attractor.
	\end{theorem}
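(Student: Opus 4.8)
The plan is to show that \eqref{eq:ParabolEq} with $A=\Delta_D-\lambda_A$ and $G$ as in \eqref{linearG} fits the abstract setup of Section~\ref{sec:main}, so that the well-posedness and random dynamical system generation results of \cite{HN22} together with Theorem~\ref{thm:attractor} apply directly. Concretely, I would verify, in order: Assumption~\ref{ass} for $A$; the relaxed form of Assumption~\ref{ass:Nonlin} (as permitted by Remark~\ref{rem:RelaxAssump}) for the coefficients; that Assumption~\ref{ass:Noise} holds for $\X$ by hypothesis; and finally invoke Theorem~\ref{thm:attractor} under the standing condition~\eqref{ineq:AssumptionOnLambda}.

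First I would treat the linear part. For a bounded $C^\infty$ domain $\cO$ and $1<p<\infty$, the Dirichlet Laplacian $\Delta_D$ on $E=L^p(\cO)$ is densely defined, generates a bounded analytic semigroup and has bounded imaginary powers (classical, e.g.\ via the bounded $H^\infty$-calculus on $L^p$), and the same holds for $A=\Delta_D-\lambda_A$. Since $-\Delta_D$ is strictly positive with first eigenvalue $\lambda_1>0$, we have $S_t=e^{-\lambda_A t}e^{t\Delta_D}$ with $\|e^{t\Delta_D}\|_{\cL(L^p)}\le1$, so $A$ is exponentially stable with rate $\tilde\lambda_A=\lambda_A$ and $C_A=1$. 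Compactness of the semigroup is equivalent to compactness of the resolvent of $\Delta_D$, i.e.\ the Rellich--Kondrachov theorem; this also supplies the compact embeddings $E_\beta\hookrightarrow E_\alpha$ for $\beta>\alpha$ used in the compactness part of Theorem~\ref{thm:attractor}. The fractional power scale of $A$ coincides with the Bessel potential spaces listed in Example~\ref{ex:Nonlin1} by the classical identification of domains of fractional powers of $\Delta_D$, provided $\alpha$ and the finitely many shifted exponents $\alpha\pm\gamma,\alpha-2\gamma,\alpha-\sigma,\alpha+\beta$ appearing in the estimates avoid the exceptional value $\tfrac{p+1}{2p}$.

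Next I would check the nonlinearities. Since \eqref{eq:ParabolEq} has no drift, $F\equiv0$ satisfies Assumption~\ref{ass:Nonlin}~1) trivially with $\sigma_F=0$ and $C_F=0$ (a globally Lipschitz $F$ could be added, its constant being absorbed by $\lambda_A$ through~\eqref{ineq:AssumptionOnLambda}). For $G(u)=(-\Delta_D)^\sigma u$ with $\sigma<\gamma$, linearity gives $G\in C^\infty$ with $DG(u)=(-\Delta_D)^\sigma$ constant and $D^iG=0$ for $i\ge2$; moreover $(-\Delta_D)^\sigma$ maps $E_{\alpha-\vartheta}$ into $E_{\alpha-\vartheta-\sigma}$ with operator norm $\le1$, uniformly in $\vartheta$ and in $\lambda_A$ (by $\sup_{\nu\ge\lambda_1}\big(\tfrac{\nu}{\nu+\lambda_A}\big)^{\sigma}\le1$), so $\sigma_G=\sigma<\gamma$ and $C_G\le1$. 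A nonzero linear $G$ is not a bounded map, so Assumption~\ref{ass:Nonlin}~2) cannot be used verbatim; instead I would invoke Remark~\ref{rem:RelaxAssump}, which only requires the derivative of $DG(\cdot)\circ G(\cdot)$ to be bounded from $E_{\alpha-\gamma}$ into $E_{\alpha-2\gamma-\sigma_G}$. Here $DG(\cdot)\circ G(\cdot)$ is the linear map $(-\Delta_D)^{2\sigma}$, whose (constant) derivative maps $E_{\alpha-\gamma}$ into $E_{\alpha-\gamma-2\sigma}\hookrightarrow E_{\alpha-2\gamma-\sigma}$ precisely because $\sigma<\gamma$. With these points in hand, \cite[Theorems~3.8 and~3.12]{HN22} (in the form allowed by Remark~\ref{rem:RelaxAssump}) yield a unique global controlled-rough-path solution of \eqref{eq:ParabolEq} in $\cD^{2\gamma}_{X,\alpha}$ and a continuous random dynamical system $\phi$ on $E_\alpha$, while Assumption~\ref{ass:Noise} holds for $\X$ by hypothesis (e.g.\ the geometric rough path lift of fractional Brownian motion with $H\in(\tfrac14,\tfrac12)$, cf.\ \cite{RiedelVarzaneh23}).

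Finally, I would apply Theorem~\ref{thm:attractor}: Assumptions~\ref{ass}, \ref{ass:Nonlin}, \ref{ass:Noise} are now verified, and~\eqref{ineq:AssumptionOnLambda} is the standing hypothesis of the present theorem (which, as discussed after Theorem~\ref{thm:attractor}, can be arranged by taking $\lambda_A$ large; here $C_F=0$, so its left-hand side is simply $\lambda_A$). This produces the random pullback $\mathscr{D}$-attractor $\cA(\omega)$ of $\phi$. The main obstacle I anticipate is not a delicate estimate but the reconciliation of an unbounded, linear diffusion coefficient with a framework tailored to bounded $G$: the key is that the relaxed hypothesis of Remark~\ref{rem:RelaxAssump} constrains only $DG\circ G$, which for linear $G$ is again linear and behaves well on the interpolation scale; a secondary, purely technical point is the identification of the fractional power scale of $A$ with the stated Bessel potential spaces together with the bounded imaginary powers of $\Delta_D$.
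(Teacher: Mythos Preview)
Your proposal is correct and follows essentially the same route as the paper: verify that the linear map $G=(-\Delta_D)^\sigma$ is a bounded operator on the interpolation scale, observe that as a linear (hence unbounded as a function) diffusion it does not satisfy Assumption~\ref{ass:Nonlin}~2) verbatim but does satisfy the relaxed condition of Remark~\ref{rem:RelaxAssump} since $DG(\cdot)\circ G(\cdot)=(-\Delta_D)^{2\sigma}$ is again linear and bounded between the required spaces, and then invoke Theorem~\ref{thm:attractor}. The paper's proof is simply a two-line version of the same argument, deferring the verification of the relaxed hypothesis to \cite[Example~4.1]{HN22}; your additional comments on Assumption~\ref{ass} for $\Delta_D-\lambda_A$ (bounded imaginary powers, compactness via Rellich--Kondrachov, exponential stability with rate $\lambda_A$) and on the identification of the fractional power scale are accurate supplementary details that the paper leaves implicit.
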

	\begin{proof}
		In this case $G$ is a linear bounded operator, therefore the assumptions in Remark \ref{rem:RelaxAssump} are satisfied, compare \cite[Example 4.1]{HN22}. The existence of the random pullback attractor follows from Theorem \ref{thm:attractor}.
		\qed 
	\end{proof}
	\begin{example}\label{ex:Nonlin2}
		We consider a nonlinear integral operator $G$,  similar to \cite[Section 7]{HN19} and~\cite[Section 7]{GLS16}. Here we treat the evolution equation on $E:=L^p(\cO)$, for $p\geq 3$, given by
		\begin{align}\label{eq:ParabolEq2}
			\begin{cases}
				\txtd y = A y~\txtd t + G(y)~\txtd \X_t, \\
				y(0)= y_0\in E_{\alpha},
			\end{cases}
		\end{align}
		where $A=\Delta_D -\lambda_A$ with $\lambda_A>0$ as in the previous example. 
		We further choose $\alpha>3\gamma$ which leads to $E_{\alpha-\vartheta-\sigma}\hookrightarrow L^p(\cO)$. We introduce the operator $G$ as
		\begin{align}\label{integralNonlin}
			G:E_{\alpha-\vartheta}\to E_{\alpha-\vartheta-\sigma},u\mapsto  \int_{\cO} g(\cdot,u(x))~\txtd x.
		\end{align}
		for $\vartheta\in \{0,\gamma,2\gamma\}$. The kernel $g:\overline{\cO}\times \R\to \R$ is assumed to be three times continuously differentiable with bounded derivates such that $\sup\limits_{x\in \R}\norm{D_2^k g(\cdot,x)}_{\alpha-\vartheta-\sigma}\leq C<\infty$ for $k=0,1,2,3$ and $g|_{\partial \cO \times \R}=0$.
	\end{example}
	\begin{theorem}
		Let $\lambda_A$ be large enough such that \eqref{ineq:AssumptionOnLambda} is satisfied. Then there exists a random dynamical system $\phi$ for \eqref{eq:ParabolEq2} on $E_\alpha$ that possesses a random pullback $\mathscr{D}$-attractor. 
	\end{theorem}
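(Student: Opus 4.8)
The plan is to obtain the statement as a direct consequence of Theorem~\ref{thm:attractor}, so the task reduces to verifying its hypotheses for the concrete data of Example~\ref{ex:Nonlin2}: Assumption~\ref{ass} for $A=\Delta_D-\lambda_A$, and the relaxed nonlinearity conditions of Remark~\ref{rem:RelaxAssump} for the integral operator $G$ from \eqref{integralNonlin}; the driving noise is a $\gamma$-H\"older rough path cocycle satisfying Assumption~\ref{ass:Noise}, and \eqref{ineq:AssumptionOnLambda} holds by hypothesis, while the drift $F\equiv0$ satisfies Assumption~\ref{ass:Nonlin}~1) trivially with $\sigma_F=0$. First I would recall the classical facts that $-\Delta_D$ on the bounded smooth domain $\cO$ generates an analytic semigroup on $E=L^p(\cO)$ with bounded imaginary powers and compact resolvent; replacing it by $A=\Delta_D-\lambda_A$ only shifts the spectrum, so $A$ still generates a compact analytic semigroup with bounded imaginary powers and in addition $\norm{S_t}_{\cL(E)}\le e^{-\lambda_A t}$. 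Thus Assumption~\ref{ass} holds and $(E_\beta)_{\beta\in\R}$ is the scale displayed in Example~\ref{ex:Nonlin1}.

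The bulk of the argument is to check that $G$ from \eqref{integralNonlin}, with $\sigma_G:=\sigma<\gamma$, meets the conditions of Remark~\ref{rem:RelaxAssump}. For boundedness, using $\sup_{s\in\R}\norm{g(\cdot,s)}_{\alpha-\vartheta-\sigma}\le C$ and $|\cO|<\infty$ I obtain, uniformly in $u\in E_{\alpha-\vartheta}$ and for each $\vartheta\in\{0,\gamma,2\gamma\}$,
\[
\norm{G(u)}_{\alpha-\vartheta-\sigma}=\Bigl\|\int_{\cO} g(\cdot,u(x))~\txtd x\Bigr\|_{\alpha-\vartheta-\sigma}\le\int_{\cO}\norm{g(\cdot,u(x))}_{\alpha-\vartheta-\sigma}~\txtd x\le C\,|\cO|,
\]
and the hypothesis $g|_{\partial\cO\times\R}=0$ guarantees that $G(u)$ has vanishing trace, hence genuinely belongs to $E_{\alpha-\vartheta-\sigma}$. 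Differentiating under the integral, $D^kG(u)(h_1,\dots,h_k)=\int_{\cO}D_2^kg(\cdot,u(x))\,h_1(x)\cdots h_k(x)~\txtd x$ for $k=1,2,3$, and H\"older's inequality gives
\[
\norm{D^kG(u)(h_1,\dots,h_k)}_{\alpha-\vartheta-\sigma}\le C\prod_{i=1}^k\norm{h_i}_{L^k(\cO)}\lesssim\prod_{i=1}^k\norm{h_i}_{\alpha-\vartheta},
\]
where the last step uses $E_{\alpha-\vartheta}\hookrightarrow E_0=L^p(\cO)$ (valid since $\alpha-\vartheta\ge\alpha-2\gamma>0$) and $L^p(\cO)\hookrightarrow L^k(\cO)$ (valid since $p\ge3\ge k$ and $|\cO|<\infty$); this is exactly where the standing assumptions $p\ge3$ and $\alpha>3\gamma$ are consumed. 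That these are genuine Fr\'echet derivatives and that $u\mapsto D^kG(u)$ is (Lipschitz) continuous follows from the mean value inequality for $s\mapsto D_2^{k}g(\cdot,s)\in E_{\alpha-\vartheta-\sigma}$ together with dominated convergence, in the spirit of \cite[Section~7]{HN19} and \cite[Section~7]{GLS16}; hence $C_G<\infty$. Finally, since $G(u)\in E_{\alpha-\gamma-\sigma}\hookrightarrow E_{\alpha-2\gamma}$ (because $\sigma<\gamma$) with $\norm{G(u)}_{\alpha-\gamma-\sigma}\le C|\cO|$, the product rule gives $D\bigl(DG(\cdot)\circ G(\cdot)\bigr)(u)h=D^2G(u)\bigl(h,G(u)\bigr)+DG(u)\bigl(DG(u)h\bigr)$ for $u,h\in E_{\alpha-\gamma}$, and, invoking the $\vartheta=\gamma$ and $\vartheta=2\gamma$ bounds above, both summands are controlled in $E_{\alpha-2\gamma-\sigma}$ by $C\norm{h}_{\alpha-\gamma}$; thus $DG(\cdot)\circ G(\cdot):E_{\alpha-\gamma}\to E_{\alpha-2\gamma-\sigma}$ has bounded derivative. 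This establishes the relaxed form of Assumption~\ref{ass:Nonlin}.

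With Assumptions~\ref{ass}, \ref{ass:Nonlin} (in the relaxed form of Remark~\ref{rem:RelaxAssump}), \ref{ass:Noise} and \eqref{ineq:AssumptionOnLambda} in force, \eqref{eq:ParabolEq2} admits a unique global controlled rough path solution which generates a continuous random dynamical system $\phi$ on $E_\alpha$, and Theorem~\ref{thm:attractor} then yields the random pullback $\mathscr{D}$-attractor. I expect the main obstacle to be the middle step: verifying three times Fr\'echet differentiability of the integral/Nemytskii-type operator with \emph{uniformly} bounded derivatives as a map $E_{\alpha-\vartheta}\to E_{\alpha-\vartheta-\sigma}$ for $\vartheta\in\{0,\gamma,2\gamma\}$, and confirming that the kernel bound $\sup_{s}\norm{D_2^kg(\cdot,s)}_{\alpha-\vartheta-\sigma}<\infty$ really controls the operator norms entering the computation (with the pointwise composition $g(\cdot,u(x))$ well defined); everything else is classical semigroup theory or a direct appeal to the abstract theorem.
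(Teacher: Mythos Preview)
Your proposal is correct and follows essentially the same route as the paper's proof: both compute the Fr\'echet derivatives $D^kG(u)(h_1,\dots,h_k)=\int_{\cO}D_2^kg(\cdot,u(x))\prod_i h_i(x)\,\txtd x$, bound them via the kernel hypotheses and the embedding $E_{\alpha-\vartheta}\hookrightarrow L^p(\cO)$ (ensured by $\alpha>3\gamma$, $p\ge3$), then verify the bounded-derivative condition on $DG(\cdot)\circ G(\cdot)$ from Remark~\ref{rem:RelaxAssump} before invoking Theorem~\ref{thm:attractor}. Your write-up is in fact somewhat more detailed than the paper's, spelling out the H\"older step and the verification of Assumption~\ref{ass} for $A=\Delta_D-\lambda_A$.
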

	\begin{proof}
		It is easy to see, that $G$ is well-defined and similar as in \cite[XVII.3]{Kantorovich} three times Fréchet differentiable with
		\begin{align*}
			[DG(u)](h_1) &=\int_{\cO} D_2g(\cdot,u(x))h_1(x)~\txtd x,\\
			[D^2G(u)](h_1,h_2) &=\int_{\cO} D_2^2 g(\cdot,u(x))h_1(x)h_2(x)~\txtd x,\\
			[D^3G(u)](h_1,h_2,h_3) &=\int_{\cO} D_2^3 g(\cdot,u(x))h_1(x)h_2(x)h_3(x)~\txtd x.
		\end{align*}
		Due to the boundedness assumption on $g$ and regarding the embedding $E_{\alpha-\vartheta-\sigma}\hookrightarrow L^p(\cO)$, all these derivatives are bounded operators from $E^{\otimes k}_{\alpha-\vartheta}$ to $E_{-\vartheta}$ for $k=1,2,3$ and $\vartheta\in \{0,\gamma,2\gamma\}$. For the derivative of $DG(u)\circ G(u)$, we can similarly show that
		\begin{align*}
			DG(u)\circ G(u)=\int_{\cO}D_2g(\cdot,u(x))\int_{\cO} g(x,u(y))~\txtd y\txtd x
		\end{align*}
		maps from $E_{\alpha-\gamma}$ to $E_{\alpha-2\gamma-\sigma}$. The derivative of this operator is  given by
		\begin{align*}
			(D(DG(u)\circ G(u))](h)&=[D^2G(u)](G(u),h)+[DG(u)](DG(u)h)\\
			&=\int_{\cO}\int_{\cO} \left(D_2^2 g(\cdot,u(x))g(x,u(y))+D_2g(\cdot,u(x))D_2g(x,u(y))\right)h(y)~\txtd y\txtd x,
		\end{align*}
		for $h\in E_{\alpha-\gamma}$. Therefore the boundedness of the derivative can be showed using  the assumption on $g$. This means that the condition on $G$ imposed in~\cite{HN22} and mentioned in Remark \ref{rem:RelaxAssump} is satisfied. In conclusion the existence of a global attractor  for~\eqref{eq:ParabolEq2} follows from Theorem \ref{thm:attractor}.
		\qed
	\end{proof}
	\begin{remark}
		Note that in Example \ref{ex:Nonlin1} and \ref{ex:Nonlin2} we do not impose as  in~\cite{LinYangZeng23} that  $\max\{L_G, \|G(0)\|_{\alpha-\sigma}, \|DG(0)G(0)\|_{\alpha-\gamma-\sigma}\}<1$, where $L_G$ incorporates the Lipschitz constants of $G$, $DG$ and $D^2G$.
	\end{remark}


\begin{thebibliography}{99}
		\bibitem{Amann95}
		H. Amann.
		\newblock {\em Linear and Quasilinear Parabolic Parabolic Problems. Vol. I: Abstract linear Theory.}
		\newblock Birkhäuser Monogr. Math., Vol. 89, Basel,  1995.
		
		\bibitem{Arnold}
		L.~Arnold.
		\newblock {\em Random Dynamical Systems}. 
		\newblock Springer Monogr. Math., Berlin, 2003.
		
		\bibitem{BRiedelScheutzow}
		I.~Bailleul, S.~Riedel and M.~Scheutzow.
		\newblock Random dynamical system, rough paths and rough flows.
		\newblock {\em J. Differential. Equat.}, 262(12):5792-5823, 2017.
		
		
		\bibitem{CaoGao23}
		Q. Cao and H. Gao.
		\newblock Wong-Zakai approximation for the dynamics of stochastic evolution equation driven by rough path with Hurst index $H\in(1/3,1/2]$.
		\newblock {\em https://doi.org/10.48550/arXiv.2211.14757}
		
		\bibitem{Hofmanova2}
		J.~Cardona, M.~Hofmanov\'a, T. Nilssen and N. Rana.
		\newblock  Random dynamical system generated by
		the 3D Navier--Stokes equation with rough transport noise. 
		\newblock {\em Electron J.~Probab.}, 27:1--27, 2022.
		
		\bibitem{Cass}
		T.~Cass, C.~Litterer and T.J.~Lyons. 
		\newblock Integrability and tail estimates for Gaussian rough
		differential equations. {\em Ann. Probab.}, 41(4):3026--3050, 2013.
		
		\bibitem{CrauelFlandoli}
		H.~Crauel and F.~Flandoli.
		\newblock Attractors for random dynamical systems.
		{\em Probab.~Theory~Rel.~Fields}, 100(3):365-394, 1994.
		
		
		\bibitem{D97}
		A.~Debussche.
		\newblock On the finite dimensionality of random attractors. 
		\newblock{\em Stoch.~Anal.~Appl.}, 15(4)473-491, 1997.
		
		
		\bibitem{DucHong23}
		L.H. Duc and P.T. Hong.
		\newblock Asymptotic Dynamics of Young Differential Equations.
		\newblock {\em J. Dynam. Differential Equations}, 35(2):1667-1692, 2023.
		
		\bibitem{DuncanMaslowski}
		T.E.~Duncan, B.~Pasik-Duncan and B.~Maslowski.
		\newblock Fractional Brownian motion and stochastic equations in Hilbert spaces. 
		{\em Stoch. Dyn.}, 2(2):225--250, 2002.
		
		
		
		
		
		\bibitem{FlandoliSchmalfuss96}
		F. Flandoli and B. Schmalfuss.
		\newblock Random attractors for the 3D stochastic Navier-Stokes equation with multiplicative white noise.
		\newblock {\em Stoch. Stoch. Rep.}, 51(1-2):21--45, 1996.
		
		\bibitem{FrizHairer}
		P.K. Friz and M. Hairer. 
		\newblock {\em  A course on rough paths with an introduction to regularity structures}. Second ed., Springer, 2020.
		
		\bibitem{FrizRiedel}
		P.K. Friz and S.~Riedel. \newblock Integrability of (non-)linear rough differential equations and integrals. {\em Stoch. Anal. Appl.}, 31(2):336--358, 2013.
		
		\bibitem{FrizVictoir10}
		P.K. Friz and N.B. Victoir.
		\newblock {\em Multidimensional stochastic processes as rough paths.}
		\newblock Cambridge University Press, Cambridge, 2010.
		
		\bibitem{GaoMS2014}
		H.~Gao, M.J.~Garrido-Atienza and B.~Schmalfuss.
		\newblock Random attractors for stochastic evolution equations driven by fractional Brownian motion. {\em SIAM J.~Math.~Anal.}, 46(4):2281--2309, 2014.
		
		\bibitem{GLS16}
		M.~Garrido-Atienza, K.~Lu and B.~Schmalfuss.
		\newblock Random dynamical systems for stochastic evolution equations driven by multiplicative fractional Brownian noise with Hurst parameters $H\in(1/3,1/2]$.
		{\em SIAM J.~App.~Dyn.~ Syst.~} 15:1,  2016. 
		
		
		\bibitem{GarridoMaslowskiSchmalfuss10}
		M.J. Garrido-Atienza, B. Maslowski and B Schmalfuss.
		\newblock Random attractors for stochastic equations driven by a fractional Brownian motion.
		\newblock {\em Internat. J. Bifur. Chaos Appl. Sci. Engrg.} 20(9):2761--2782, 2010.
		
		\bibitem{GHairer}
		A.~Gerasimovi{\v{c}}s and M.~Hairer.
		\newblock H\"ormander's theorem for semilinear SPDEs.
		\newblock {\em Electron. J. Probab.}, 24:1--56, 2019.
		
		\bibitem{Gess:m}
		B.~Gess.
		\newblock Random attractors for stochastic porous media equations perturbed by space-time linear multiplicative noise.
		{\em Ann.~Probab.}, 42(2):818--864, 2014.
		
		\bibitem{Gess}
		B.~Gess, W.~Liu and M.~R\"ockner.
		\newblock Random attractors for a class of stochastic partial differential equations driven by general additive noise.
		{\em J. Differential Equations}, 251(4-5):1225--1253, 2011.
		
		\bibitem{GHN2021}
		A. Gerasimovi{\v{c}}s, A. Hocquet and T. Nilssen.
		\newblock Non-autonomous rough semilinear {PDEs} and the multiplicative sewing lemma.
		\newblock {\em J. Funct. Anal.}, 281(10):65, 2021.
		
		\bibitem{GubinelliTindel}
		M.~Gubinelli and S.~Tindel.
		\newblock Rough evolution equations.
		\newblock {\em Ann. Probab}, 38(1):1--75, 2010.  
		
		
		\bibitem{Henry81}
		D. Henry.
		\newblock {\em Geometric theory of semilinear parabolic equations}. Springer, Berlin Heidelberg, 1981.
		
		\bibitem{HN19} 
		R.~Hesse and A.~Neam\c tu.  
		\newblock Local mild solutions for rough stochastic partial differential equations.
		\newblock {\em J. Differential Equations}, 267(11):6480--6538, 2019.
		
		\bibitem{HN20}
		R.~Hesse and A.~Neam\c tu.
		\newblock Global solutions and random dynamical systems for rough evolution equations.
		\newblock {\em Discrete Contin. Dyn. Syst.}, 25(7):2723--2748, 2020.
		
		\bibitem{HN22}
		R.~Hesse and A.~Neam\c tu. 
		\newblock Global solutions for semilinear rough partial differential equations. 
		\newblock {\em Stoch. Dyn.}, 22:2240011, 2022.
		
		
		\bibitem{Kantorovich}
		L. V. Kantorovich and G. P. Akilov.
		\newblock {\em Functional Analysis}, 2nd Edition. Pergamon Press, 1982.
		
		\bibitem{KN23}
		C.~Kuehn and A.~Neam\c tu. 
		\newblock Center manifolds for rough partial differential equations. 
		{\em Electronic J.~Probab}., 28(48):1--31, 2023.
		
		
		
		\bibitem{KuhnNeamtuSonner21}
		C. Kuehn, A. Neam\c{t}u and S. Sonner.
		\newblock Random attractors via pathwise mild solutions for stochastic parabolic evolution equations.
		\newblock {\em J. Evol. Equ.}, 21(2):2631--2663, 2021.
		
		\bibitem{Kunita90}
		H.~Kunita.
		\newblock {\em Stochastic flows and stochastic differential equations}.
		Cambridge University Press, 1990.
		
		\bibitem{LinYangZeng23}
		X. Lin, Q. Yang and C. Zeng.
		\newblock Random attractors for rough stochastic partial differential equations.
		\newblock {\em J. Differential Equations}, 371:50--82, 2023.
		
		
		\bibitem{KN21}
		A.~Neam\c tu and C.~Kuehn.
		\newblock Rough center manifolds. 
		{\em SIAM J.~Math.~Anal.}, 53(4):3912--3957, 2021.
		
		\bibitem{NS23}
		A.~Neam\c tu and T.~Seitz.
		\newblock Stochastic evolution equations with rough boundary noise. {\em Partial Differential Equ.~Appl.}, 4, 49. 2023.
		
		\bibitem{RiedelVarzaneh23}
		S. Riedel and M.G. Varzaneh.
		\newblock An integrable bound for rough stochastic partial differential equations with applications to invariant manifolds and stability.
		\newblock {\em arXiv:2307.01679v2}, 2023.
		
		\bibitem{RV23}
		S.~Riedel and M.~G.~Varzaneh.
		\newblock Invariant manifolds and stability for rough differential equations.
		\newblock {\em arXiv:2311.02030}, 2023. 
		
		\bibitem{Schmalfuss}
		B.~Schmalfuss.
		\newblock Backward cocycles and attractors of stochastic differential equations.
		\newblock {\em International Seminar on Applied Mathematics --Nonlinear Dynamics: Attractor Approximation and Global Behaviour}, pp~185--192, 1992. 
		
		\bibitem{VeraarSchnaubelt}
		R. Schaubelt and M.C. Veraar.
		\newblock Stochastic Equations with Boundary Noise.
		\newblock {\em Parabolic Problems, Progr. Nonlinear Differential Equations Appl.}, 80:609--629, 2011.
		
		\bibitem{SellYou}
		G.R. Sell and Y. You.
		\newblock {\em Dynamics of evolutionary equations}. Springer, New York, 2002.
		
		\bibitem{Bao}
		B.Q.~Tang.
		\newblock Regularity of random attractors for stochastic reaction-diffusion equations on unbounded domains.
		\newblock {\em Stoch.~Dyn.}, 16(01):1650006, 2016.
		
		\bibitem{Vrabie03}
		I.C. Vrabie.
		\newblock {\em $C_0$-semigroups and applications.}
		\newblock North-Holland Mathematics Studies, Elsevier, 2003.    
	\end{thebibliography}
\end{document}